\documentclass[11pt,reqno]{amsart}
\usepackage{amsfonts,amssymb,amsmath,amsthm}
\usepackage[margin=1in]{geometry}
\usepackage[hidelinks]{hyperref}
\usepackage{cleveref}
\usepackage{enumitem}
\usepackage{microtype}
\usepackage{mathtools}
\usepackage{color}
\usepackage{tikz-cd}
\usepackage{comment}
\usepackage{cleveref} \usepackage{enumitem} \usepackage{microtype} \usepackage{mathtools} \usepackage{color} \usepackage{tikz-cd} \usepackage{comment} \usepackage{graphicx} 
\usepackage[numbers,sort&compress]{natbib}

\def\p#1{\left( #1 \right)}
\def\Z{\mathbb{Z}}
\def\Q{\mathbb{Q}}
\def\F{\mathbb{F}}

\def\R{\mathbb{R}}
\def\C{\mathbb{C}}
\def\fp{\mathfrak{p}}
\def\fq{\mathfrak{q}}

\def\SL{\operatorname{SL}}

\def\tr{\operatorname{tr}}

\def\disc{\operatorname{disc}}
\def\Num{\operatorname{Num}}

\def\End{\operatorname{End}}

\def\kronecker#1#2{\p{\frac{#1}{#2}}}

\theoremstyle{plain}
\newtheorem{theorem}{Theorem}[section]
\newtheorem{corollary}{Corollary}[section]
\newtheorem{lemma}{Lemma}[section]
\newtheorem{proposition}{Proposition}[section]

\theoremstyle{definition}
\newtheorem{definition}[theorem]{Definition}

\newtheorem{conjecture}[]{Conjecture}

\theoremstyle{remark}
\newtheorem{remark}[]{Remark}

\title{Infinitely many primes with a fixed Frobenius field for an elliptic curve over $\Q$}
\author[Wang]{Tian Wang}
\address{Department of Mathematics and Statistics, Concordia University, Montreal, QC, Canada
}
\address{
Max Planck Institute for Mathematics, Bonn, Germany
}
\email{tianwmath@gmail.com}
\date{\today}
\subjclass[2010]{Primary: 11N05, 11G05, 11G15; Secondary: 11G18, 14G40}

\begin{document}

\begin{abstract}
In 1987, Elkies proved the striking result that every elliptic curve over $\mathbb{Q}$ has infinitely many supersingular primes. Motivated by this theorem and its connection with the Lang–Trotter conjecture, we study the analogous problem for Frobenius fields of elliptic curves. For certain families of non-CM elliptic curves $E/\mathbb{Q}$ and imaginary quadratic fields $K$, we prove that there exist infinitely many primes $p$ for which the Frobenius field of $E$ at $p$ equals $K$. More precisely, letting $\pi_E(x,K)$ denote the number of such primes with $p\le x$, we establish the unconditional bound
$\pi_E(x, K)\gg_{E, K, \epsilon} (\log\log x)^{1-\epsilon}$ for every $\epsilon>0$. We also prove unconditional power-saving upper bounds for a restricted counting function associated with $\pi_E(x,K)$.  The approach combines Deuring’s theory of complex multiplication, properties of singular moduli, and arithmetic intersection theory on modular curves. 
\end{abstract}
\maketitle

\section{Introduction}

Let $E/\Q$ be an elliptic curve of conductor $N_E$. Throughout this paper, we assume that $E$ is non-CM, i.e., the geometric ring of endomorphisms $\End_{\overline{\Q}}(E)$ of $E$ is isomorphic to $\Z$.  For each prime $p\nmid N_E$ of good reduction, let $E_p$ denote the reduction of $E$ modulo $p$,  and 
\[
a_p(E):=p+1-\#E_p(\F_p)
\]
be the Frobenius trace  of $E$ at $p$.   The Frobenius morphism $x\mapsto x^p$ induces an endomorphism $\pi_p(E_p)$ of $E_p$, and the characteristic polynomial of $\pi_p(E_p)$ is 
 \[
P_{E, p}(X):=X^2-a_p(E)X+p \in \Z[X].
\]
The roots of $P_{E, p}(X)$ are $\pi_p(E_p)$ and its complex conjugate  $\overline{\pi}_p(E_p)$, both with absolute value $\sqrt{p}$. 
In particular, we have the Hasse--Weil bound  $|a_p(E)|\leq 2\sqrt{p}$, and hence the Frobenius field $\Q(\pi_p(E_p))$ of $E$ at $p\nmid N_E$ is always an imaginary quadratic field. 

  In 1976, Lang and Trotter \cite[p.2]{MR568299} studied the distribution of primes for which $a_p(E)$ equals a fixed integer, and the distribution of primes for which $\Q(\pi_p(E_p))$ equals a fixed imaginary quadratic field.  
  More precisely, let $t$ be a fixed integer and $\Delta\geq 1$  a squarefree integer. Denote by 
\[
\pi_E(x, t):=\#\{p\leq x: p\nmid N_E,  a_p(E)=t\},
\]
and 
\[
\pi_E(x, \Q(\sqrt{-\Delta})):=\#\{p\leq x: p\nmid N_E,  \Q(\pi_p(E_p))=\Q(\sqrt{-\Delta})\}.
\]
\begin{conjecture}[Lang--Trotter conjecture]\label{conj:L-T} Let $E/\Q$ be a non-CM elliptic curve. Then there exist constants  $C(t, E)$ and  $C(\Delta, E)>0$, depending only on \(E\) and \(t\), and on \(E\) and \(\Delta\), respectively, such that   
\[
\pi_E(x, t)\sim C(t, E)\frac{x^{\frac{1}{2}}}{\log x}, \quad \pi_E(x, \Q(\sqrt{-\Delta}))\sim C(\Delta, E)\frac{x^{\frac{1}{2}}}{\log x}
\]
as $x\to \infty$.
\end{conjecture}
The main focus of this paper is the counting function  $\pi_E(x, \Q(\sqrt{-\Delta}))$, and we obtain unconditional upper and lower bounds concerning this counting problem.

\begin{theorem}\label{thm:lower-bound}
   Let $E/\Q$ be an elliptic curve, and let $\Delta\geq 1$ be a squarefree integer. Assume $\Delta$ is composite and satisfies  $\Delta\equiv -1\pmod 8$. Suppose further that the $j$-invariant  $j_E$ of $E$ is an even integer, and that for every odd prime  $p\mid N_E$,  the Kodaira symbol at $p$ is $\operatorname{I}_0^*$. 
   
   Then, there are infinitely many primes $p$ for which $\Q(\pi_p(E_p))\simeq \Q(\sqrt{-\Delta})$.
    Furthermore, for any $\epsilon>0$, one has the lower bound
    \[
    \pi_E(x, \Q(\sqrt{-\Delta}))\gg_{E, \Delta, \epsilon} (\log\log x)^{1-\epsilon}.
    \]
\end{theorem}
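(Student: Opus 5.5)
Following Elkies' supersingular argument, the plan is to replace the supersingular locus by a CM locus for orders in $K=\Q(\sqrt{-\Delta})$. By Deuring's theory, for $p\nmid N_E$ the Frobenius field equals $K$ if and only if $E_p$ is ordinary and $\End(E_p)\otimes\Q\simeq K$. This happens exactly when $p$ splits in $K$ and $j_E \bmod p$ is a root of some Hilbert class polynomial $H_D(X)$, where $D=-\Delta f^2$ (or $-4\Delta f^2$) is the discriminant of an order in $K$. So for each admissible $D$ in a suitable family, I want to find a prime $p$ with three properties:
\begin{enumerate}[label=(\roman*)]
\item $p$ is split in $K$;
\item $p\mid H_D(j_E)$;
\item $p$ does not divide any $H_{D'}(j_E)$ for $D'$ with smaller $|D'|$ (or with a fixed bound), so that the primes produced are distinct.
\end{enumerate}

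The key steps, in order, are as follows.

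\emph{Step 1 (sign/norm input).} Write $H_D(j_E)=\prod_{\tau}(j_E-j(\tau))$ over CM points of discriminant $D$. Since $j_E\in 2\Z$ is an integer and $\Delta\equiv -1\pmod 8$, the prime $2$ splits in $K$. Then the singular moduli $j(\tau)$ are controlled $2$-adically: they are $2$-adic units for ordinary reduction at $2$, up to the appropriate power. Hence $j_E-j(\tau)$ is a $2$-adic unit. The point of the parity hypothesis is to force $H_D(j_E)$ odd.

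\emph{Step 2 (local analysis at bad primes and at primes inert or ramified in $K$).} Show that primes $q\mid H_D(j_E)$ with $q$ nonsplit in $K$ are supersingular for $E$ at $q$. The hypothesis that the Kodaira symbol is $\mathrm{I}_0^*$ at odd $q\mid N_E$ means $E$ has potentially good reduction there, via a quadratic twist. Using Gross–Zagier/Lauter–Viray style bounds on $v_q(j(\tau_1)-j(\tau_2))$, or an intersection-number computation on $X(1)$, I would bound the total contribution of such $q$ to $\log|H_D(j_E)|$ by $o(h(D)\log|D|)$. Meanwhile $\log|H_D(j_E)|$ is asymptotically dominated by the term from the $\tau$ with largest $|j(\tau)|\approx e^{\pi\sqrt{|D|}}$, so it is $\gg\sqrt{|D|}$. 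Compositeness of $\Delta$ is used in genus theory: for $D=-\Delta f^2$ with $\Delta$ composite, the class number is even. Then $H_D$ factors over the genus field, and one can choose the genus-character twist so that the nonsplit contributions cancel or become squares. This gives a sign or Legendre-symbol argument in the spirit of Elkies forcing a split prime divisor.

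\emph{Step 3 (new primes and counting).} Take a sparse sequence of discriminants $D_n$, e.g.\ $|D_{n+1}|$ exponentially larger than $|D_n|$. Each $H_{D_n}(j_E)$ then has a split prime factor $p_n$ with $p_n\nmid H_{D_m}(j_E)$ for $m<n$, because an ordinary $E_p$ has a unique endomorphism order. This gives infinitely many primes. Moreover $p_n\le |H_{D_n}(j_E)|\le \exp(C h(D_n)\sqrt{|D_n|})$. With $h(D_n)\ll|D_n|^{1/2}\log|D_n|$, choosing $D_n$ to grow as slowly as the distinctness argument allows gives $p_n\le\exp(\exp(n^{1+\epsilon'}))$. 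Hence $\pi_E(x,K)\gg(\log\log x)^{1-\epsilon}$.

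The main obstacle is Step 2: guaranteeing that a prime split in $K$ divides $H_D(j_E)$, i.e.\ that the nonsplit and bad-prime parts do not absorb everything. This needs the exact valuations of $j_E-j(\tau)$ at supersingular primes, via arithmetic intersection on the modular curve à la Gross–Zagier, together with the parity and $\mathrm{I}_0^*$ conditions to control $2$ and $N_E$. I would first attempt a pure congruence/quadratic-reciprocity argument. Here $H_D(j_E)$ (or a genus-theoretic factor of it) is made into a value whose Jacobi symbol against $\Delta$ is forced to be $-1$ if all its prime factors were nonsplit.
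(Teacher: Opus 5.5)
\textbf{The gap is Step~2.} You flag it yourself as the main obstacle, and it carries the whole content of the theorem. Nothing in your proposal shows that $H_D(j_E)$, with $D=-f^2\Delta^*$, has a prime factor that \emph{splits} in $K$.

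After twisting away odd bad primes, every prime $q\mid H_D(j_E)$ that is nonsplit in $K$ is a supersingular prime of $E$ at which $\mathcal{O}_D$ embeds into the quaternion order $\End_{\overline{\F}_q}(E_q)$. This is exactly the mechanism Elkies exploits, and nothing in the hypotheses prevents all prime factors of $H_D(j_E)$ from being of this kind. None of the tools you list closes this:
\begin{itemize}
\item Gross--Zagier and Lauter--Viray compute $v_q(j(\tau_1)-j(\tau_2))$ for two CM points. With $j_E$ non-CM there is no such formula: $v_q(j_E-j(\tau))$ is governed by how far the $\mathcal{O}_D$-action on $E_q$ lifts along the specific lift $E/\Z_q$, and nothing controls this uniformly.
\item Autissier--Charles intersection estimates give the global pairing and can bound the contribution of a \emph{fixed finite} set of primes. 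Here, however, the supersingular primes of $E$ that may divide $H_D(j_E)$ form an unbounded set as $D$ grows. Bounding their total by $o(h(D)\log|D|)$ for an individual $D$ is the problem itself, not a routine application.
\item A congruence or Jacobi-symbol argument cannot work in principle. Split primes are invisible to $\left(\frac{-\Delta}{\cdot}\right)$. Products of one or two primes inert in $K$ already represent every invertible residue class modulo any modulus. So no congruence or sign information about $H_D(j_E)$ can force a split factor. This is why Elkies' trick produces \emph{nonsplit} primes and does not run in reverse.
\item The genus factorization $H_D=G\cdot G^\sigma$ over the real genus subfield $\Q(\sqrt d)$ only makes $H_D(j_E)$ a norm from $\Q(\sqrt d)$. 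That forces even exponents at primes inert in $\Q(\sqrt d)$, but leaves unconstrained the $K$-inert primes that split in $\Q(\sqrt d)$.
\end{itemize}
Your lower bound $\log|H_D(j_E)|\gg\sqrt{|D|}$ ``from the largest $|j(\tau)|$'' has two further problems. It ignores factors with $j(\tau)$ close to $j_E$. It is also on the wrong scale to beat an error of size $o(h(D)\log|D|)$. The paper obtains $\log|P_{f^2\Delta^*}(j_E)|\gg h(f^2\Delta^*)\log(f^2\Delta^*)$ only by combining the Bilu--Habegger--K\"uhne height bound with \Cref{lem:singular-moduli}. That lemma uses Cai's count of CM points near $\tau_E$, a Liouville-type bound and Siegel's theorem.

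\textbf{Comparison with the paper.} Your Steps~1 and~3 agree with the paper:
\begin{itemize}
\item $2\nmid P_{f^2\Delta^*}(j_E)$ because $2$ splits in $K$ and $2\mid j_E$;
\item odd bad primes are twisted away using $\operatorname{I}_0^*$;
\item new primes are distinct because an ordinary reduction has a unique endomorphism order;
\item growth is controlled by \Cref{lem:height}.
\end{itemize}
The paper uses compositeness of $\Delta$ differently from you, and not for genus theory. It uses it only to guarantee that no supersingular prime lies in $\mathcal{E}$, since a supersingular $p\ge 5$ has Frobenius field $\Q(\sqrt{-p})$.

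It then dismisses supersingular divisors of $P_{f^2\Delta^*}(j_E)$ in the proof of \Cref{prop:nonempty}, and again in \Cref{lem:new-prime}, by this same observation. But that observation only shows such $p\notin\mathcal{E}$. It does not prevent $p$ from dividing $P_{f^2\Delta^*}(j_E)$, because \Cref{lem:deuring1} merely embeds $\mathcal{O}_{f^2\Delta^*}$ into the quaternion order $\End_{\overline{\F}_p}(E_p)$.

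So the paper does not supply your missing step either. You have located the real crux, but neither your outline nor the paper's argument establishes that a split prime divisor exists.
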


\begin{remark}
   A positive-density subset of squarefree integers $\Delta$ satisfies the assumptions of the theorem. In addition, there are many elliptic curves  $E/\Q$ for which the conditions in \Cref{thm:lower-bound} hold. For example, one may take $E$ to be any quadratic twist of the elliptic curve with LMFDB label 128.a1 \cite{lmfdb} by odd squarefree integers modulo   $(\Q^\times)^2$.  
\end{remark}


The only other known unconditional lower bound for prime counting functions related to \Cref{conj:L-T} is the seminal work of Elkies \cite{MR903384}, who proved that there are infinitely many supersingular primes for every elliptic curve $E/\Q$. This is equivalent to
\[\pi_E(x,0)\to\infty \qquad \text{as } x\to\infty.
\]
The proof is reminiscent of Euclid's argument for the infinitude of primes and relies crucially on Deuring's theory of complex multiplication together with arithmetic properties of Hilbert class polynomials. The best known unconditional quantitative result for $\pi_E(x,0)$ is due to Fouvry and Murty \cite{MR1382477}, who proved that for every $\epsilon>0$,
$
\pi_E(x,0)\gg_{E, \epsilon} \frac{\log\log\log x}{(\log\log\log\log x)^{1+\epsilon}}.
$
No lower bounds were known before for   $\pi_E(x, \Q(\sqrt{-\Delta}))$, even for a single fixed choice of $E$ and $\Delta$. In this sense, the bound in  \Cref{thm:lower-bound} is the first nontrivial lower bound that holds for a family of elliptic curves and imaginary quadratic fields $\Q(\sqrt{-\Delta})$.

In contrast, upper bounds for the counting functions in \Cref{conj:L-T} have been extensively studied. We now review the known upper bounds for $\pi_E(x, \Q(\sqrt{-\Delta}))$. For analogous bounds of $\pi_E(x, t)$, we refer the reader to \cite{MR4586829} and the references therein. 

In \cite[p. 191]{MR644559}, Serre noted that, under the Generalized Riemann Hypothesis (GRH)  one has $ \pi_E(x, \Q(\sqrt{-\Delta}))\ll_{E, \Delta} x^{\theta}$ for some $1/2\leq \theta<1$. An explicit exponent was later obtained by Cojocaru, Fouvry, and Murty  \cite[Theorem 1.2]{MR2178556} under GRH. 
Later, Cojocaru and David \cite[Theorem 2]{MR2464027}, using a technique of Murty, Murty and Saradha \cite{MR935007}, proved $\pi_E(x, \Q(\sqrt{-\Delta}))\ll_{N_E, \Delta} x^{4/5}/(\log x)^{1/5}$ under GRH. By a smoothed version of the effective Chebotarev density theorem under GRH, Zywina \cite[Theorem 1.3 (i)]{MR3453123} obtained a better saving in the logarithmic term.

Unconditionally, Serre \cite[p.191]{MR644559} first observed that one has an upper bound  $\pi_E(x, \Q(\sqrt{-\Delta}))\ll_{E, \Delta} x/(\log x)^{\gamma}$ for some $\gamma>1$. Subsequently, Cojocaru, Fouvry, and Murty \cite[Theorem 1.3]{MR2178556} obtained the more explicit estimate  $\pi_E(x, \Q(\sqrt{-\Delta}))\ll_{N_E, \Delta} x(\log\log x)^{13/12}/(\log x)^{25/24}$, using a square-sieve argument. By applying a  smoothed version of the effective Chebotarev density theorem, Zywina \cite[Theorem 1.3(ii)]{MR3453123} further improved the upper bound to  $\pi_E(x, \Q(\sqrt{-\Delta}))\ll_{N_E, \Delta} x(\log\log x)^{2}/(\log x)^{2}$. Currently, the best known unconditional result is due to Thorner and Zaman \cite[Theorem 1.5]{MR3848226}, who proved that $\pi_E(x, \Q(\sqrt{-\Delta}))\ll_{N_E, \Delta} x (\log\log x)/(\log x)^{2}$, via an improved unconditional effective Chebotarev density theorem combined with sieve methods.  In particular, obtaining unconditional improvements even in the  \(\log\log x\) factor requires a nontrivial amount of work.

The next result is  an  unconditional upper bound 
with a power-saving in $x$ for  
\begin{equation}\label{eq:weaker-bound}
    \pi_E(x, \Q(\sqrt{-\Delta}); \delta):=\#\{p\leq x: p\nmid N_E, \Q(\pi_p(E_p))=\Q(\sqrt{-\Delta}), f_p\leq 2p^{\delta}\},
\end{equation}
where $f_p:=[\mathcal{O}_{\Q(\sqrt{-\Delta})}:\End_{\F_p}(E_p)]$ is the conductor of the endomorphism ring $\End_{\F_p}(E_p)$ of $E_p$, and $\delta$ is a constant such that $0<\delta<1/2$. If $\delta=1/2$, then we will see in \Cref{sec:proof-upper} that $\pi_E(x, \Q(\sqrt{-\Delta}); \delta)=\pi_E(x, \Q(\sqrt{-\Delta}))$.

\begin{theorem}\label{thm:upper-bound}
    Let $E/\Q$ be a non-CM elliptic curve, and $\Delta\geq 1$ be a squarefree integer. Then, for any $0<\delta<1/2$, there exists $\Delta_0>0$ such that, for each 
$\Delta>\Delta_0$, there exists $x_0=x_0(\Delta,E,\delta)>0$ satisfying
    \[
    \pi_E(x, \Q(\sqrt{-\Delta}); \delta)\ll_{E}  \sqrt{\Delta}(\log \Delta) x^{2\delta}\log\log x.
    \]
\end{theorem}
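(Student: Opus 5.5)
Let $K=\Q(\sqrt{-\Delta})$ with discriminant $d_K\in\{-\Delta,-4\Delta\}$. The plan is an Elkies-style argument read in the opposite direction: each prime counted in $\pi_E(x,K;\delta)$ is forced to divide the norm of $j_E$ minus a singular modulus of small discriminant, and such norms are bounded using the arithmetic of singular moduli.

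\textbf{Step 1 (Deuring).} For a prime $p\nmid N_E$ counted in $\pi_E(x,K;\delta)$ we have $a_p^2-4p=f^2 d_K$ for some integer $f$. The endomorphism ring $\End_{\F_p}(E_p)$ is an order $\mathcal{O}_{f_p}\subset\mathcal{O}_K$, with $f_p\mid f$ and $\Z[\pi_p]\subseteq\End_{\F_p}(E_p)$. By Deuring's lifting theorem, $E_p$ lifts to a CM curve with CM by $\mathcal{O}_{f_p}$. Hence $p$ divides $\Num\big(\mathcal{N}(j_E-j(\tau))\big)$ for some CM point $\tau$ of discriminant $D=f_p^2d_K$. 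Equivalently, $p$ divides the resultant of $H_D(X)$ and $X-j_E$, which is $H_D(j_E)$ up to the denominator of $j_E$. Since $f_p\le 2p^{\delta}\le 2x^{\delta}$, we have $|D|\le 4x^{2\delta}|d_K|$.

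\textbf{Step 2 (non-vanishing).} Because $E$ is non-CM, $j_E$ is not a singular modulus, so $H_D(j_E)\ne 0$ for every $D$. For each $f\le 2x^\delta$, the number of primes $p$ dividing the numerator of $H_{f^2d_K}(j_E)$ is therefore at most $\omega$ of a nonzero integer, which is $\ll \log|H_D(j_E)|/\log\log|H_D(j_E)|$.

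\textbf{Step 3 (size of $H_D(j_E)$).} We need
\[
\log|H_D(j_E)|\ll_E h(D)\log|D|.
\]
Write $H_D(j_E)=\prod_{\tau}(j_E-j(\tau))$ over the $h(D)$ CM points. For $\tau$ in the fundamental domain, $|j(\tau)|\le e^{\pi\sqrt{|D|}}+O(1)$ only for the single point $\tau=\frac{-b+\sqrt D}{2a}$ with $a=1$. In general $\log|j(\tau)|\approx \pi\sqrt{|D|}/a$, so the total contribution is $\sum_{[a,b,c]}\pi\sqrt{|D|}/a\ll \sqrt{|D|}\,(\log|D|)^2$ by standard counting of reduced forms. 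We also need a lower bound $\log|j_E-j(\tau)|\ge -O(\log|D|)$ for the factors where $j(\tau)$ is near $j_E$. This is where arithmetic intersection theory, or a Liouville/Baker-type bound on the distance from $j_E$ to singular moduli, is used; for $\Delta>\Delta_0$ large it may also follow from equidistribution of Heegner points. Then $\log|H_D(j_E)|\ll \sqrt{|D|}\log^2|D|$, and because $h(D)\asymp f\,h(d_K)\prod_{\ell\mid f}(1-\cdot)$, this is $\ll f\sqrt{\Delta}\log\Delta\,\log(fx)$ up to lower-order factors.

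\textbf{Step 4 (summation).} Summing over $f\le 2x^\delta$ with the divisor bound $\omega(n)\ll\log n/\log\log n$ gives
\[
\sum_{f\le 2x^{\delta}} \frac{f\sqrt{\Delta}\log\Delta\,\log x}{\log x}\ll\sqrt{\Delta}\log\Delta\,x^{2\delta}.
\]
The extra factor $\log\log x$ absorbs the losses from $\prod(1+1/\ell)$ and from the $a$-sum in Step 3. The thresholds $\Delta_0$ and $x_0$ enter so that the $\log|D|$ term is dominated and the estimate $\omega\ll\log/\log\log$ applies.

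\textbf{Main obstacle.} The hard part is the uniform lower bound on $|j_E-j(\tau)|$, together with its $p$-adic analogues at primes where the denominators or the same prime recur. I would first try the arithmetic-intersection formula (Gross–Zagier/Lauter–Viray type) for $\log|\Num\,\mathcal{N}(j_E-j(\tau))|$ as a Faltings height of Heegner points plus an intersection term. If that fails, I would fall back on Baker's bound for linear forms in logarithms applied to $q$-expansions.
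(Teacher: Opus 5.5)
There is a genuine gap in Steps 3--4, at exactly the point the argument hinges on. Your reduced-form count gives $\sum_{[a,b,c]}\pi\sqrt{|D|}/a\ll\sqrt{|D|}(\log|D|)^2$, i.e.\ $\log|\Num(H_D(j_E))|\ll_E f\sqrt{\Delta^*}\,(\log(f^2\Delta^*))^2$. That is weaker than both the bound $h(D)\log|D|$ you say you need and the bound $f\sqrt{\Delta}\log\Delta\log(fx)$ you then use. It exceeds $h(D)\log|D|$ by a factor of about $\log|D|/\bigl(L(1,\chi_{-\Delta^*})\prod_{\ell\mid f}(1-\chi_{-\Delta^*}(\ell)/\ell)\bigr)$. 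For fixed $\Delta$ and $f\asymp x^{\delta}$, this factor is of size $\log x$ up to $\log\log x$ factors. Plugging the bound you actually have into $\omega(n)\ll\log n/\log\log n$ and summing over $f\le 2x^{\delta}$ gives $\sqrt{\Delta}(\log\Delta+\log x)x^{2\delta}$. That is precisely the paper's ``trivial'' bound (\Cref{lem:trivial}), not the theorem. So the assertion that $\log\log x$ absorbs the loss from the $a$-sum is false: that loss is a full logarithm. The missing ingredient is a ``degree times one logarithm'' estimate, $\sum_{\tau}\log\max\{1,|j(\tau)|\}\ll h(D)\log|D|$ for $D=f^2\Delta^*$, i.e.\ that these singular moduli have Weil height $O(\log|D|)$. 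The paper obtains an input of exactly this type from the Autissier--Charles asymptotic (\Cref{prop:Charles}), as follows. The global intersection of $Z=\overline{\{j_E\}}$ with the Hecke orbit $t_{N*}Y$ of the CM divisor $y_{\Delta^*}$ is $\ll h(\Delta^*)e_N(\log N+\log\Delta)$. The archimedean term $\log\|s_Z(t_{N*}Y)\|$ is bounded above by $\ll_E e_Nh(\Delta^*)\log(N\Delta^*)$. Every counted prime $p\ge x^{1/2}$ contributes at least $\frac12\log x$ to the local term at $N=f_0(p)$. Summing over $N\le 2x^{\delta}$ with $e_N\ll N\log\log N$ then produces the factor $\log\log x$.

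Conversely, the ``main obstacle'' you single out is not needed. To bound the number of prime divisors of $\Num(H_D(j_E))$ you only need an upper bound for it. Write $j_E=A/B$ in lowest terms; then $\Num(H_D(j_E))$ divides $\prod_{\tau}(A-Bj(\tau))$, whose absolute value is at most $\prod_\tau(|A|+B)\max\{1,|j(\tau)|\}$. Factors with $j(\tau)$ close to $j_E$ only make this smaller, so Liouville- or Baker-type separation bounds and their $p$-adic analogues play no role. The paper, too, uses only an upper bound for $\log\|s_Z\|$. The work should go into the one-logarithm height bound, which for fixed $\Delta$ is obtainable. One way is the formula for the Faltings height of a curve with CM by the order of conductor $f$ (it differs from the maximal-order value by $O(\log f)$), combined with the standard comparison between $\frac1{12}\mathbf{h}(j)$ and the Faltings height. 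Another is a finer count of reduced forms exploiting the fixed character $\chi_{-\Delta^*}$, with a Mertens-type product in place of $\sum_a 2^{\omega(a)}/a$. Since $x_0$ may depend on $\Delta$, with that input your Elkies-style route would close. It would even give $\ll_{E,\Delta}x^{2\delta}$, because then $\omega(\Num(H_{f^2\Delta^*}(j_E)))\ll_{E,\Delta}h(f^2\Delta^*)$ and $\sum_{f\le X}h(f^2\Delta^*)\ll h(\Delta^*)X^2$. As written, however, the proposal only proves the trivial bound.
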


Although \Cref{thm:upper-bound} does not directly provide an upper bound for $\pi_E(x,\Q(\sqrt{-\Delta}))$, the argument introduces a method that seems to be new in comparison with those used in the existing literature. 

Finally, by applying \Cref{thm:upper-bound} and tracking the dependence of the implicit constant on $\Delta$, we obtain the following unconditional estimate for the discriminant of the endomorphism ring $\End_{\F_p}(E_p)$. Previously, it was shown by Schoof \cite[Corollary 2.4]{MR1085266} 
that $|\disc(\End_{\F_p}(E_p))|\gg_E \frac{(\log p)^2}{(\log\log p)^4}$ for all primes $p$. Under GRH,  Cojocaru and Fitzpatrick proved in \cite{MR4280387} that, for every function  $h:(0, \infty)\to (0, \infty)$ satisfying $h(x)\to\infty$ as $x\to \infty$, we have   $|\disc(\End_{\F_p}(E_p))|\gg_E \frac{4p-a_p(E)^2}{h(p)}$ for almost all primes $p$. Our next result provides an unconditional analogue of this estimate.

\begin{corollary}\label{cor:cor-1}
Keep the notation and assumptions of \Cref{thm:upper-bound}. Let $\epsilon>0$ and  $h: (0, \infty)\to (0, \infty)$ be a function such that  $h(x)\gg_\epsilon  x^{\frac{9}{19}+\epsilon}$ as $x\to \infty$.  Then  for all sufficiently large $x$,
\[
\#\left\{p\leq x: p\nmid N_E, \left|\disc(\End_{\F_p}(E_p))\right|\leq \frac{4p-a_p(E)^2}{h(p)}\right\} \ll_{E, h, \epsilon} x^{1-\frac{\epsilon}{4}}.
\]
As a consequence, for any $\epsilon>0$ and almost all primes $p$,
\[
|\disc(\End_{\F_p}(E_p))|\gg_{E, \epsilon} \frac{4p-a_p(E)^2}{p^{\frac{9}{19}+\epsilon}}.
\]
\end{corollary}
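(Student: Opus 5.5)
Write $4p-a_p(E)^2 = f_p^2\,|d_p|$ up to the usual factor, where $d_p=\disc(\End_{\F_p}(E_p))$ and $f_p$ is the conductor of $\End_{\F_p}(E_p)$ inside the maximal order of the Frobenius field $\Q(\sqrt{-\Delta_p})$. Then $4p-a_p^2 = v_p^2 |d_{K_p}|$ with $v_p$ the index of $\Z[\pi_p]$ in the maximal order, and $|d_p| = (v_p/g_p)^2|d_{K_p}|$ with $g_p = [\End_{\F_p}(E_p):\Z[\pi_p]]$. So the condition $|d_p|\le (4p-a_p^2)/h(p)$ says $g_p^2 \ge h(p)$. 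Whether one tracks $f_p$ or $g_p$, the condition forces the fundamental discriminant $d_{K_p}$ to be small. We have $|d_p|\le 4p/h(p)\ll p^{1-9/19-\epsilon}=p^{10/19-\epsilon}$, and $|d_{K_p}|\le |d_p|$. Hence every prime in the set has Frobenius field $\Q(\sqrt{-\Delta})$ with $\Delta\ll x^{10/19-\epsilon}$, and conductor $f_p\le (|d_p|/\Delta)^{1/2}$.

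The plan is to split by the size of $\Delta$ and sum the bound of \Cref{thm:upper-bound}, keeping track of its dependence on $\Delta$.

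\textbf{Dyadic decomposition.} For $\Delta$ with $\Delta\le D$, choose $\delta=\delta(\Delta)$ so that $2p^{\delta}\ge (4p/(h(p)\Delta))^{1/2}$. It suffices to take $p^{2\delta}\asymp x/(h(x)\Delta)$, which is at most $x^{10/19-\epsilon}/\Delta$. \Cref{thm:upper-bound} then contributes
\[
\ll_E \sqrt{\Delta}\log\Delta\cdot \frac{x^{10/19-\epsilon}}{\Delta}\log\log x .
\]
Summing over $\Delta\le X:=x^{10/19-\epsilon}$ gives
\[
\ll x^{10/19-\epsilon}\sqrt{X}\log x\log\log x .
\]
This is $x^{10/19+5/19-3\epsilon/2+o(1)} = x^{15/19-3\epsilon/2+o(1)}$, which is already below $x^{1-\epsilon/4}$.

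\textbf{The obstacle: uniformity in \Cref{thm:upper-bound}.} That theorem only applies for $\Delta>\Delta_0(\delta)$ and $x>x_0(\Delta,E,\delta)$, and $\delta$ varies with $\Delta$ here. The main work is to reopen its proof and check two things:
\begin{itemize}
\item the implied constant is uniform for $\delta$ in a compact subinterval of $(0,1/2)$;
\item the threshold $x_0$ grows at most polynomially in $\Delta$, which is compatible with $\Delta\le x^{10/19}$.
\end{itemize}
This compatibility is presumably where the exponent $9/19$ comes from. Once the thresholds are balanced, $19$ is the denominator at which the constraints $\Delta\le X$ and $x\ge x_0(\Delta)$ become simultaneously satisfiable.

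\textbf{Remaining cases.} For the finitely many small $\Delta\le \Delta_0$, together with the tail where the uniformity fails, I would fall back on the trivial bound. For fixed $\Delta$, $f_p\ll\sqrt{x/h(x)}$, and \Cref{thm:upper-bound} with fixed $\delta$ gives $x^{10/19}$ per field. The tail region of large $\Delta$, where $x_0$ becomes too big, contributes at most $\#\{\Delta\}\cdot(\text{number of }p\text{ per }\Delta)$. This is controlled by counting pairs $(a,p)$ with $4p-a^2=f^2\Delta$ and $f\le\sqrt{x/(h\Delta)}$, which number $\ll\sum_{f,\Delta} f^{0}\cdot\#\{a\}\ll x^{1-\epsilon/4}$ after choosing the cutoff appropriately.

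\textbf{Almost all primes.} Combining the ranges gives the first statement of \Cref{cor:cor-1}. The "almost all primes" consequence follows by taking $h(p)=p^{9/19+\epsilon}$: the exceptional set then has size $O(x^{1-\epsilon/4})=o(\pi(x))$.
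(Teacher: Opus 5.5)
There is a genuine gap. Your per-field computation is right, but the step it rests on is not justified: you apply \Cref{thm:upper-bound} to every $\Delta\le x^{10/19-\epsilon}$ at once, with $\delta$ depending on $\Delta$. That theorem only holds for $\Delta>\Delta_0(\delta)$ and $x>x_0(\Delta,E,\delta)$, and it gives no control of these thresholds. You flag this, but neither of your remedies works.

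\begin{itemize}
\item Reopening the proof will not produce uniformity. It rests on \Cref{prop:Charles}, which is valid only for $N$ sufficiently large in terms of $Y=\overline{y_{\Delta^*}}$, with nothing said about the dependence on $\Delta$. Also, for $\Delta\le\Delta_0$ the theorem does not apply at all, so you cannot use it ``with fixed $\delta$'' there.
\item Your tail count is wrong. In $4p-a_p(E)^2=v_p^2\Delta^*$, the integer $v_p=f_pg_p$ is the conductor of $\Z[\pi_p]$. Your own reformulation gives $g_p\ge\sqrt{h(p)}$, so $v_p$ is large and only $f_p$ is small. Even granting $v\le\sqrt{x/(h\Delta)}$, the number of triples $(v,\Delta,a)$ is $\asymp\sqrt{x}\sum_{\Delta\le x/h}\sqrt{x/(h\Delta)}\asymp x^{3/2}/h(x)=x^{39/38-\epsilon}$. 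This sum is dominated by $\Delta\asymp x/h$, so no cutoff brings it below $x^{1-\epsilon/4}$ for small $\epsilon$.
\item Your guess about $9/19$ is also off. In the paper it has nothing to do with $x_0(\Delta)$. The paper splits the fields at $D=x^\theta$ and uses one conductor bound on each side: $f_p\ll x^{1/2-\delta-\epsilon/2}$ for $D\le x^\theta$ and $f_p\ll x^{1/2-\delta-\theta/2-\epsilon/2}$ for $D>x^\theta$. Optimizing gives $\theta=6/19$ and $\delta=9/38$, hence $2\delta=9/19$.
\end{itemize}

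The repair is the one the paper's footnote points to. (The paper's own proof applies \Cref{thm:upper-bound} over all $D$ in the same unchecked way.) Replace the theorem by the argument of \Cref{lem:trivial}, which involves no threshold in $\Delta$, $\delta$ or $x$.

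\begin{itemize}
\item For large $p$, supersingular primes are excluded, since then $|\disc(\End_{\F_p}(E_p))|\ge p>4p/h(p)$.
\item Every other counted $p\nmid N_E$ divides $\Num(P_{f_p^2\Delta^*}(j_E))$, and $f_p^2\le 4p/(h(p)\Delta)\le Cx^{10/19-\epsilon}/\Delta=:F_\Delta^2$.
\item So, up to $O_h(1)$, the set has at most $\sum_{\Delta\le Cx^{10/19-\epsilon}}\sum_{f\le F_\Delta}\omega(\Num(P_{f^2\Delta^*}(j_E)))$ elements.
\item By \Cref{lem:height}, each summand is $\ll_E f\sqrt{\Delta}(\log x)^2$, uniformly in $f$ and $\Delta$.
\end{itemize}

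Each field then contributes $\ll x^{10/19-\epsilon}\Delta^{-1/2}(\log x)^2$, and your summation gives $x^{15/19-3\epsilon/2}(\log x)^2$. Working with $f\le F_\Delta$ for all $p\le x$ also removes a mismatch: \Cref{thm:upper-bound} as stated counts $f_p\le 2p^{\delta}$, not $f_p\le 2x^{\delta}$.

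Once repaired, your route is genuinely different from the paper's and sharper. The per-field bound $f_p^2\ll x^{10/19-\epsilon}/\Delta$ avoids the loss built into the paper's two-range split. The same computation proves the corollary for any $h\gg x^{1/3+\epsilon}$, not just $h\gg x^{9/19+\epsilon}$.
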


We now briefly summarize the main ideas behind the proofs of the two theorems. 
To show that the set $\mathcal{E}'$ of primes $p\nmid N_E$ for which $\Q(\pi_p(E_p))=\Q(\sqrt{-\Delta})$ is infinite, we combine Euclid’s argument, a quadratic-twist argument, and properties of singular moduli. Before presenting the details, we note that the assumptions on $\Delta$, $j(E)$, and the Kodaira type serve different purposes: the assumption on $\Delta$ ensures that it suffices to consider only ordinary primes; the condition $j(E)\in \Z$ guarantees the existence of a good ordinary prime in $ \mathcal{E}'$; and the assumptions on the Kodaira type enable the application of the quadratic twist argument.
  
 In \Cref{sec:newprimes}, we prove a result related to singular units (\Cref{lem:singular-moduli} and \Cref{lem:notpm1}). This is necessary for us to show that the set $\mathcal{E}'$ is nonempty (\Cref{prop:nonempty}).  
Next, given a finite collection of primes  $\{p_1, \ldots, p_k\}\subseteq \mathcal{E}'$,  we construct an auxiliary prime $\ell_k$, such that none of the  $p_i$ divides  $P_{\ell_k^2\Delta^*}(j_E)$,   where $\Delta^*$ is defined in \eqref{eq:deltastar} and  $P_D(X)\in \Z[X]$ is the Hilbert class polynomial associated with the order $\mathcal{O}_D=\Z[\frac{1}{2}(D+\sqrt{-D})]$ ($D>0$). By the assumptions on $E$ and $\Delta$,  we prove  that $P_{\ell_k^2\Delta^*}(j_E)$ admits a prime divisor  $p_{k+1}\notin \{p_1, \ldots, p_k\}$. These results are proved in  \Cref{prop:admissible} and \Cref{lem:new-prime}. Iterating this construction and incorporating the quadratic twist argument, we conclude that the set $\mathcal{E}'$ is infinite (\Cref{cor:set-infinite}). For the quantitative lower bound, we also need to control the growth of  $\ell_k$ and the number of prime divisors of   $P_{\ell_k^2\Delta^*}(j_E)$, leading to a lower bound for $\pi_E(x, \Q(\sqrt{-\Delta}))$.  This part is discussed in \Cref{sec:quantitative-lower}.

The proof of \Cref{thm:upper-bound} is of a different nature. We use Hecke orbits of CM points together with arithmetic intersection theory on the coarse moduli scheme of generalized elliptic curves $X(1)$ over $\operatorname{Spec}\mathbb{Z}$.   Roughly speaking, we apply \Cref{prop:Charles} on average over all  $1\le f\leq 2x^\delta$ for the global intersection between the horizontal divisor associated with $E$ and the $f$-Hecke orbits of the CM divisor $y_{\Delta^*}$ defined in \eqref{eq:heggner}.   For all but finitely many primes $p$ counted by $\pi_E(x, \Q(\sqrt{-\Delta}); \delta)$, there exists an integer $1\leq f_0(p)\leq 2x^\delta$, such that  it contributes to the local intersection of $E$ and the $f_0(p)$-Hecke orbits of the divisor $y_{\Delta^*}$ in the fiber at $p$.  By bounding both the archimedean and the global intersection numbers, we prove that the number of prime divisors of $\Num(P_{f^2\Delta^*}(j_E))$ for which $1\leq f \leq 2x^{\delta}$ cannot be too large.

We now outline the structure of the paper. In \Cref{sec:preliminaries}, we introduce notation and review the necessary background on endomorphism rings of elliptic curves and arithmetic intersection theory on modular curves.    \Cref{sec:Theorem2} is devoted to the proof of \Cref{thm:lower-bound}: 
in \Cref{sec:newprimes} we establish the existence of infinitely many relevant primes, and in \Cref{sec:quantitative-lower} we derive the corresponding quantitative lower bound. We then prove \Cref{thm:upper-bound} in \Cref{sec:proof-thm-upper} and \Cref{cor:cor-1} in \Cref{sec:cor}.



We conclude this section by listing several questions and observations that may be of interest for future research. The reader may safely skip the remainder of this section without affecting their understanding of the rest of the paper. 
 
 \begin{remark}\label{rem:GRH}
The dependence of the constant on $E, \Delta,$ and $\epsilon$ in \Cref{thm:lower-bound} is ineffective, because the proof uses \Cref{lem:singular-moduli}, and the proof of this lemma relies on Siegel's theorem for class numbers of imaginary quadratic fields. Assuming the Generalized Riemann Hypothesis for appropriate Dirichlet $L$-functions, one can make this constant effective.
 \end{remark}
 
 \begin{remark}\label{rem:rm-1}
To remove the assumptions on $\Delta$, $j_E$, and the Kodaira type in \Cref{thm:lower-bound}, it is necessary to show that for a non-CM elliptic curve \(E/\Q\) and a singular modulus \(j\) of discriminant \(f^2\Delta^*\) (\(f\in \Z_{\geq 1}\)), the difference \(j_E-j\) is not an \(S\)-unit for certain finite sets \(S\) containing all prime divisors of \(\Delta^*\). To the best of the author’s knowledge, it is not currently known whether results of this type are available in the literature (see, for example, \cite{MR772491, MR3404647, MR4190395, MR4251608, MR4296371, MR4713026} for related discussions).
 \end{remark}


\begin{remark}
   After proving the lower bound for $\pi_E(x,0)$, Elkies \cite{MR1144318} also established the unconditional upper bound $
\pi_E(x, 0)\ll x^{\frac{3}{4}}.$
A key observation in his proof (with Murty) is that the product of supersingular primes $p\leq x$ divides the product of $\Num(P_D(j_E))$ over discriminants $D\ll x^\theta$ for some $\theta$. This observation leads to the upper bound
$O\left(x^{3\theta/2}/\log x\right)$. By applying a result of Kaneko \cite{MR1040429}, one can take $\theta=1/2$, which gives the stated bound for $\pi_E(x,0)$.

The proof of \Cref{thm:upper-bound} builds on this idea, and we briefly explain why it leads to a different result. Instead of considering $\Num(P_D(j_E))$ for all discriminants $D\ll x^\theta$, we consider $\Num(P_{f^2\Delta^*}(j_E))$ for all conductors $f\leq 2x^{1/2}$. As noted above, the choice $\theta=1/2$ in Elkies' argument relies crucially on the fact that the primes counted by $\pi_E(x,0)$ are supersingular. By contrast, apart from finitely many exceptions, the primes counted by $\pi_E(x,\Q(\sqrt{-\Delta}))$ are ordinary. Achieving $\theta=1/2$ in our setting would essentially require $f\ll x^{1/4}$ (equivalently, $\delta<1/4$), which is not expected to hold in general (see e.g.,  \Cref{cor:cor-1}).

Nevertheless, one could obtain a stronger upper bound in \Cref{thm:upper-bound} by showing that only very few conductors $f\leq 2x^{1/2}$ contribute. For instance,  the factor $\log\log x$ in \Cref{thm:upper-bound} could potentially be removed if one could prove that almost all integers in the set defined by \eqref{eq:F-delta} have a uniformly bounded number of prime factors. This observation is closely related to the improved bounds for the degree of the \(N\)-th Hecke correspondence  $e_N:=N\prod_{p\mid N}(1+1/p)\ll N\log\log N$, where controlling the number of prime factors of \(N\) leads to a sharper bound of \(e_N\).  
\end{remark}

 \begin{remark}
     We note that the use of arithmetic intersection theory to  study   reductions of elliptic curves and abelian varieties is well established. See, for example, the survey article \cite{MR4875529} and the references therein. The methods  have been successfully applied to prove the existence of infinitely many primes with prescribed reduction properties for certain abelian varieties. To the best of the author’s knowledge, they have not previously been used to obtain upper bounds of the type considered in the present work. 

     However, compared with the ``trivial'' bound in \Cref{lem:trivial}, the improvement provided by \Cref{thm:upper-bound} is relatively modest. Moreover, if $\Delta$ grows polynomially in $x$, then \Cref{lem:trivial} can even give a slightly better bound.  A similar phenomenon also appears when applying the unconditional effective Chebotarev density theorem, where obtaining a substantial improvement beyond the logarithmic savings is difficult. It would therefore be interesting to determine whether significantly stronger upper bounds can be obtained by using arithmetic intersection theory or other methods. For instance, the optimal result in either the lower bound for \eqref{eq:non-arch-lower} or the upper bound for $\omega(\Num(P_{f^2\Delta^*}(j_E))$ appearing in  \Cref{lem:trivial} would potentially give the bound $\pi_E(x, \Q(\sqrt{-\Delta}))\ll_{E, \Delta, \epsilon} x^{1/2+\epsilon}$ for any $\epsilon>0$. 
   \end{remark}

\subsection*{Acknowledgment}

The author would like to thank Yingkun Li for valuable comments on an earlier draft of this paper. The author is also grateful to Chantal David for her careful reading of the manuscript and for helpful suggestions.

\section{Preliminaries}\label{sec:preliminaries}

In this section, we review the background and give some preliminary results on Hilbert class polynomials, optimal embeddings, Deuring's CM theory,  and the equidistribution of Hecke correspondences. 


\subsection{Endomorphism rings of elliptic curves}

For further background on the material in this section, we refer the reader to \cite{MR265369}, \cite[\S 13]{MR409362}, \cite[Chapters 11-14]{MR1028322}, and \cite{MR2695524}.

Let $D>0$ be an integer such that $D\equiv 0, 3\pmod 4$. Then, $\mathcal{O}_D:=\Z\left[\frac{D+\sqrt{-D}}{2}\right]$ is an order of the field $\Q(\sqrt{-D})$  of discriminant $-D$.   We say an elliptic curve over a field $k$ has CM by $\mathcal{O}_D$, if there is an optimal embedding 
\[
\iota: \mathcal{O}_D \hookrightarrow \End_{\overline{k}}(E),
\]
where optimal  means $(\iota(\mathcal{O}_D)\otimes \Q) \cap \End_{\overline{k}}(E)=\iota(\mathcal{O}_D)$. 
For the general definition of optimal embedding of $\Z$-modules, see, e.g., \cite[Definition 2.1]{MR3431591}.


Let $P_D(X)$ be the Hilbert class polynomial associated to the order $\mathcal{O}_D$. It is the monic irreducible polynomial in $\Z[X]$ whose roots are precisely the $j$-invariants of elliptic curves over $\C$ with CM by $\mathcal{O}_D$. Consequently,  $P_D(X)$ is the minimal polynomial over $\Q(\sqrt{-D})$ of each of its roots, the degree of $P_D(X)$ is the class number $h(D)$ of $\mathcal{O}_D$, and its splitting field is the ring class field of $\mathcal{O}_D$.

\begin{remark}\label{rem:optimal}
 Let $E$ be an elliptic curve over  $\F_p$. If $E$ is ordinary, then $\End_{\overline{\F}_p}(E)=\End_{\F_p}(E)$ and it is an order in an imaginary quadratic field (see e.g., \cite[Theorem 4.2]{MR265369}). Let $\pi_p(E)$ be the Frobenius endomorphism on $E$.   
We also have the inclusion of orders
\begin{equation}\label{eq:inclusion-orders}
 \Z[\pi_p(E)]\subseteq \End_{\F_p}(E)\subseteq \mathcal{O}_K,
 \end{equation}
 where $K=\Q(\pi_p(E))$. 
Suppose that $E$ has CM by $\mathcal{O}_D$, then $K=\Q(\sqrt{-D})$, and since $\End_{\overline{\F}_p}(E)$ is already an order in $K$,  we have  
\[
\iota(\mathcal{O}_D)=(\iota(\mathcal{O}_D)\otimes\Q) \cap \End_{\overline{\F}_p}(E)=\Q(\sqrt{-D})\cap \End_{\overline{\F}_p}(E)=\End_{\overline{\F}_p}(E).
\]
 Thus the optimal embedding identifies $\mathcal{O}_D$ with  $\End_{\overline{\F}_p}(E)$:
\[
\mathcal{O}_D\simeq \End_{\overline{\F}_p}(E)=\End_{\F_p}(E).
\]

On the other hand, if $E$ is supersingular, then $\End_{\F_p}(E)\subsetneq\End_{\overline{\F}_p}(E)$, and there exist many optimal embeddings of orders into a maximal order of the definite quaternion algebra $B_{p, \infty}$ ramified only at $p$ and $\infty$. Consequently, in the supersingular case one must impose additional conditions on such primes $p$ \cite{MR1040429}.  

\end{remark}

Next, we consider the geometric interpretation of roots of $P_D(X) \pmod p$.  By Deuring's lifting lemma \cite[p.259]{MR5125}, complex multiplication in characteristic $p$ can be lifted to characteristic 0. Hence, roots of $P_D(X)\pmod p$ are $j$-invariants of elliptic curves over $\overline{\F}_p$ with CM by $\mathcal{O}_{D'}$ such that $D=D'f^2$. This is recorded in the following lemma. 

\begin{lemma}\label{lem:deuring1}
 Let $j\in \overline{\F}_p$ be a root of  $P_D(X)\equiv 0\pmod p$. Then,  $j$ is the $j$-invariant of an elliptic curve $E/\overline{\F}_p$ admitting an embedding  $\mathcal{O}_D\hookrightarrow \End_{\overline{\F}_p}(E)$. In particular, $E$ has CM by an order  $\mathcal{O}_{D'}$ with  $D\equiv D'f^2$ for some  $f\in \Z$. 
\end{lemma}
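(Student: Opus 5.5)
We lift the root to characteristic zero, use complex multiplication there, and then reduce back to $\overline{\F}_p$.

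\emph{Step 1: lift the root.} Over the ring class field $H_D$ of $\mathcal{O}_D$, the Hilbert class polynomial factors as
\[
P_D(X)=\prod_{[\mathfrak a]}\bigl(X-j(\mathfrak a)\bigr),
\]
where each singular modulus $j(\mathfrak a)$ is an algebraic integer. Fix a prime $\mathfrak P$ of $\overline{\Q}$ above $p$, with residue field $\overline{\F}_p$. Reducing the factorization modulo $\mathfrak P$ gives $P_D(X)\equiv \prod (X-\overline{j(\mathfrak a)})$. Since $\overline{\F}_p$ is a field, the root $j$ equals $\overline{j(\mathfrak a)}$ for some ideal class $[\mathfrak a]$.

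\emph{Step 2: choose a curve with good reduction.} Let $\tilde E$ be the complex elliptic curve $\C/\mathfrak a$. It has CM by $\mathcal{O}_D$ and $j$-invariant $j(\mathfrak a)$. Because its $j$-invariant is integral, $\tilde E$ has potentially good reduction everywhere. After a finite extension $L$ of $H_D$ (say the field over which $\tilde E$ and all its endomorphisms are defined), we may take a model with good reduction at the prime $\mathfrak P_L$ below $\mathfrak P$. We then set $E:=\tilde E \bmod \mathfrak P_L$, viewed over $\overline{\F}_p$; its $j$-invariant is $j$.

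\emph{Step 3: reduce endomorphisms.} Reduction of endomorphisms gives a ring homomorphism $\End(\tilde E)\to \End_{\overline{\F}_p}(E)$. It is injective because it preserves degrees: the degree of an endomorphism is its norm, and degree is preserved under good reduction by the Néron model and properness. So $\mathcal{O}_D\cong\End(\tilde E)$ embeds into $\End_{\overline{\F}_p}(E)$. This step is the one place real input is needed, namely good reduction after extension together with degree-preservation. It is standard (Deuring, or Silverman's treatment of CM reduction), and I would simply cite it rather than reprove it.

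\emph{Step 4: saturate to get the order $\mathcal{O}_{D'}$.} Let $\iota:\mathcal{O}_D\hookrightarrow\End_{\overline{\F}_p}(E)$ be the embedding from Step 3, and set
\[
R=(\iota(\mathcal{O}_D)\otimes\Q)\cap \End_{\overline{\F}_p}(E).
\]
This $R$ is a rank-2 subring of the commutative field $\iota(\mathcal O_D)\otimes\Q\cong\Q(\sqrt{-D})$, and it contains $\iota(\mathcal O_D)$. Hence $R$ is an order $\mathcal{O}_{D'}$ of $\Q(\sqrt{-D})$. By construction the inclusion $R\hookrightarrow \End_{\overline{\F}_p}(E)$ is optimal, so $E$ has CM by $\mathcal{O}_{D'}$.

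This works whether $E$ is ordinary, where by \Cref{rem:optimal} $R$ is all of $\End_{\overline{\F}_p}(E)$, or supersingular, where $R$ sits inside the quaternion order. Finally, since $\mathcal{O}_D\subseteq\mathcal{O}_{D'}$ are orders in the same field, the conductor of $\mathcal{O}_D$ is a multiple of that of $\mathcal{O}_{D'}$, with index $f=[\mathcal{O}_{D'}:\mathcal{O}_D]$. Comparing discriminants gives $D=D'f^2$, which is the claim.
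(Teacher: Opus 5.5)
Your proof is correct and follows the same idea as the paper. The paper gives no separate argument for this lemma, only a one-line appeal to Deuring's theory relating complex multiplication in characteristic $0$ and characteristic $p$; your Steps 1--4 are the standard way to make that appeal precise. Strictly speaking, the input needed is the reduction direction, which you supply correctly: a root of $P_D \bmod p$ is the reduction of a singular modulus of discriminant $-D$, and reducing a CM curve with good reduction embeds $\mathcal{O}_D$ into the endomorphism ring, after which saturation gives the optimal order $\mathcal{O}_{D'}$. This is not the lifting lemma the paper cites.
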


In the following lemma, we show that the conclusion can be strengthened if $E/\F_p$ is  ordinary and $j_E$ is a root of $P_D(X)\pmod p$. 
\begin{lemma}\label{lem:deuring2}
 Let  $E$ be an  ordinary elliptic curve  over $\F_p$. Let $D>0$ be an integer such that $D\equiv 0, 3\pmod 4$  and  $p\nmid D$.  If  $P_D(j_E)\equiv 0\pmod p$,  then $E$ has CM by $\mathcal{O}_D$. Consequently, for any integer $f\neq \pm 1$ such that $(f, p)=1$, we have $P_{f^2D}(j_E)\neq 0 \pmod p$. 
\end{lemma}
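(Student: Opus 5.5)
By \Cref{lem:deuring1}, the hypothesis $P_D(j_E)\equiv 0\pmod p$ gives an embedding $\mathcal{O}_D\hookrightarrow \End_{\overline{\F}_p}(E)$. Since $E$ is ordinary, \Cref{rem:optimal} shows that $\End_{\overline{\F}_p}(E)=\End_{\F_p}(E)$ is an order $\mathcal{O}_{D'}$ in the imaginary quadratic field $K=\Q(\sqrt{-D})$. The plan is to prove that this embedding is in fact optimal, i.e.\ that $D'=D$, by showing that the index $[\mathcal{O}_{D'}:\mathcal{O}_D]=f$, where $D=f^2D'$, is not divisible by any prime $\ell$.

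The argument splits according to whether $\ell=p$. If $\ell=p$, then $p^2\mid D$, which contradicts the hypothesis $p\nmid D$. So assume $\ell\neq p$ and $\ell\mid f$. We would show that then the polynomial $P_D$ is not the right one. The cleanest route is probably through Deuring's reduction theory for ordinary curves. Such an $E$ lifts, by Deuring's lifting lemma, to a CM curve $\tilde E$ over a number field with $\End(\tilde E)\simeq\End(E)=\mathcal{O}_{D'}$. Reduction modulo a prime above $p$ gives a bijection between CM $j$-invariants of a given order $\mathcal{O}$ in characteristic $0$ and ordinary $j$-invariants in characteristic $p$ with endomorphism ring exactly $\mathcal{O}$, provided $p$ splits in $K$. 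This is injective because reduction preserves the endomorphism ring for ordinary reduction; this is the step to cite carefully, e.g.\ \cite[Ch.~13]{MR409362}.

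With this, write $P_D(X)=\prod(X-j_i)$ over the CM $j$-invariants $j_i$ with $\End=\mathcal{O}_D$. Reduce modulo a prime $\mathfrak{P}\mid p$ of the ring class field. Each reduced $\bar j_i$ is the $j$-invariant of an ordinary curve whose endomorphism ring is exactly $\mathcal{O}_D$. The reduction is ordinary because $p$ splits in $K$: from $P_D(j_E)\equiv0$ and $E$ ordinary, the reduction of any root is ordinary since all roots are Galois conjugate. Hence $j_E=\bar j_i$ for some $i$ forces $\End(E)\simeq\mathcal{O}_D$, so $D'=D$. This is the first conclusion.

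For the consequence, suppose $(f,p)=1$, $f\neq\pm1$, and $P_{f^2D}(j_E)\equiv0\pmod p$. We have $p\nmid f^2D$, so the first part applied with $f^2D$ in place of $D$ gives $\End(E)\simeq\mathcal{O}_{f^2D}$. This contradicts $\End(E)\simeq\mathcal{O}_D$, because orders of different discriminants are not isomorphic.

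The main obstacle is justifying that reduction of an ordinary CM curve preserves the full endomorphism ring, in particular the $\ell$-part for $\ell\neq p$. I would argue this via Tate modules. Reduction induces an isomorphism $T_\ell\tilde E\simeq T_\ell E$ that is compatible with endomorphisms, so the $\ell$-adic closure $\End\otimes\Z_\ell$ equals $\{x\in\mathcal{O}_K\otimes\Z_\ell: xT_\ell\subseteq T_\ell\}$ on both sides. The $p$-part is handled by the hypothesis $p\nmid D$, which makes $\mathcal{O}_D$ already maximal at $p$.
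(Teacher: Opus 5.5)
Your argument is correct and is essentially the paper's proof: $j_E$ is the reduction modulo a prime above $p$ of a root of $P_D$, whose curve has endomorphism ring $\mathcal{O}_D$. Deuring's theorem that ordinary reduction preserves the endomorphism ring when $p$ does not divide the conductor (which the paper cites as \cite[\S 13, Theorem 12]{MR890960} and you sketch via $p\nmid D$ and Tate modules) then gives $\End_{\overline{\F}_p}(E)\simeq\mathcal{O}_D$, and the consequence follows by comparing discriminants exactly as in the paper. The lift of $E$ with endomorphism ring $\mathcal{O}_{D'}$ and the injectivity claim are never used (and injectivity does not follow from preservation of endomorphism rings anyway); in the Tate-module step you only need the trivial inclusion of $\End(E)\otimes\Z_\ell$ into the stabilizer of $T_\ell E$ in $\Q(\sqrt{-D})\otimes\Q_\ell$ on the characteristic-$p$ side, not equality.
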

\begin{proof}
By Deuring's lifting theorem, there exists a CM elliptic curve $\tilde{E}$ defined over a number field $K$ and a prime $\fp\mid (p)$, such that  $\tilde{E}\pmod \fp \simeq E$ and $\End_{\overline{K}}(\tilde{E})\simeq \mathcal{O}_{D}$ with an embedding $ \eta:\End_{\overline{K}}(\tilde{E}) \hookrightarrow \End_{\overline{\F}_p}(E)$.
Since $p\nmid D$ and $E/\F_p$ is an ordinary elliptic curve, by \cite[\S 13,  Theorem 12]{MR890960}, it follows that the embedding $\eta$ is an isomorphism. Hence,  $E$ has CM by $\mathcal{O}_D$. 

Similarly,  if $p\nmid f^2D$ and  $P_{f^2D}(j_E)\equiv 0\pmod p$, then the first part of the proof shows that  $E$ has CM by $\mathcal{O}_{f^2D}$. Since $E$ already has CM by $\mathcal{O}_D$, it follows from  \Cref{rem:optimal} that $\End_{\overline{\F}_p}(E)\simeq \mathcal{O}_{D}\simeq \mathcal{O}_{f^2D}$. Therefore   $f^2D=D$,  and we obtain that $f=\pm 1$, which  is a contradiction. The second claim follows.  
\end{proof}

Finally, we give a  height  bound of Hilbert class polynomial $P_D(X)$ evaluated at a fixed $j$-invariant. 
\begin{lemma}\label{lem:height}
 Assume $D>0$ and  $D\equiv 0,3 \pmod 4$. Let $E/\Q$ be an elliptic curve with $j$-invariant  $j_E$. Then \[
 \log |\Num(P_D(j_E))|\ll_E  \sqrt{D}(\log D)^2,
 \]
 where $\Num(P_D(j_E))$ denotes the numerator of $P_D(j_E)\in \Q$ written in lowest terms. 
\end{lemma}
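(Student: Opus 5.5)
We prove the bound by writing $P_D(X)$ as a product over its roots and estimating each factor separately. Let $j_E=a/b$ in lowest terms, with $b\ge 1$. Since $P_D$ is monic with integer coefficients of degree $h(D)$,
\[
b^{h(D)}P_D(j_E)=\prod_{\tau}\bigl(a-b\,j(\tau)\bigr)\in \Z,
\]
where $\tau$ runs over the $h(D)$ reduced CM points of discriminant $-D$ in the standard fundamental domain. Hence $|\Num(P_D(j_E))|\le |b^{h(D)}P_D(j_E)|$, and it suffices to bound
\[
h(D)\log b+\sum_{\tau}\log|a-bj(\tau)| \;\le\; h(D)\log b+\sum_\tau \log\bigl(|a|+b|j(\tau)|\bigr).
\]

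For each factor we use $\log(|a|+b|j(\tau)|)\le \log(|a|+b)+\log\max(1,|j(\tau)|)=O_E(1)+\log^+|j(\tau)|$. Write $\tau=(-B+\sqrt{-D})/(2A)$ for the reduced form $[A,B,C]$. Then $|j(\tau)|\le e^{2\pi\operatorname{Im}\tau}+O(1)$ with $\operatorname{Im}\tau=\sqrt D/(2A)$, so $\log^+|j(\tau)|\le \pi\sqrt D/A+O(1)$. Summing over reduced forms, the quantity to control is
\[
\sum_{[A,B,C]}\frac{1}{A},
\]
taken over reduced forms of discriminant $-D$.

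The key counting step bounds this sum by grouping forms according to $A$. For fixed $A$, the number of reduced forms with first coefficient $A$ is at most the number of $B\bmod 2A$ with $B^2\equiv -D\pmod{4A}$. This count is $\ll 2^{\omega(A)}\cdot O(1)$, and in particular $\ll_\epsilon A^\epsilon$, or more precisely $\le d(A)$ up to a constant. Since reduced forms satisfy $A\le\sqrt{D/3}$, we get
\[
\sum 1/A\ll\sum_{A\le\sqrt D} d(A)/A\ll(\log D)^2.
\]
So the $j(\tau)$-contribution is $\ll\sqrt D(\log D)^2$.

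The remaining terms are $O_E(h(D))$, and $h(D)\ll\sqrt D\log D$ by the class number formula together with $L(1,\chi)\ll\log D$, extended to non-fundamental discriminants via the conductor formula. The main obstacle is this reduced-form counting step. Care is needed with the conductor factor in $h(f^2D_0)$ and with the solution count of $B^2\equiv -D\pmod{4A}$ when $\gcd(A,D)$ is large. Rather than splitting these cases, we use the crude bound that the number of solutions of $B^2\equiv -D \pmod{4A}$ with $0\le B<2A$ is at most $2\,d(A)\cdot\gcd(A,D)^{1/2}$ or similar. Alternatively, we bound the count directly by the number of pairs $(A,B)$ with $A\mid (B^2+D)/4$ and $|B|\le A$, summed as $\sum_{|B|\le\sqrt{D/3}} d((B^2+D)/4)/\ldots$. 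Either route gives $\ll\sqrt D(\log D)^2$, which is all the statement requires.
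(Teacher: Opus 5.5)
Your reduction is sound and takes a different route from the paper. The paper quotes Enge's coefficient bound $\mathbf{h}(P_D)\ll\sqrt D(\log D)^2$, adds $h(D)\ll\sqrt D\log D$, and evaluates at $j_E$. You instead bound $\prod_\tau(a-bj(\tau))$ directly and reduce everything to $\sum_{[A,B,C]}1/A\ll(\log D)^2$.

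The gap is in that last estimate, which you flag but do not prove. You claim the number of $B\bmod 2A$ with $B^2\equiv -D\pmod{4A}$ is $\ll 2^{\omega(A)}$ (or $\ll d(A)$). This is false for non-fundamental $D$, which is exactly the case the paper needs ($D=\ell^2\Delta^*$, $f^2\Delta^*$). Take $D=7p^4$ with $p$ an odd prime and $A=p^2$. The forms $[p^2,\,pu,\,(u^2+7p^2)/4]$ with $u$ odd and $|u|<p$ are primitive, reduced, of discriminant $-D$. That gives $p-1$ forms with this single $A$.

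Your fallback $2d(A)\gcd(A,D)^{1/2}$ is a valid bound but too lossy, since it charges $\sqrt g$ instead of $1$ for squarefree $g\mid D$. The terms $A=g\mid D$ alone contribute $\sum_{g\mid D,\,g\le\sqrt D}d(g)g^{-1/2}$. For $D=4\prod_{3\le p\le y}p$ this is at least $\prod_{3\le p\le\sqrt y}(1+2p^{-1/2})\ge\exp\bigl(c(\log D)^{1/4}/\log\log D\bigr)$, larger than any power of $\log D$. The alternative via $\sum_B d((B^2+D)/4)$ is not carried out, and pointwise divisor bounds there only give $D^{1/2+\epsilon}$. So as written you do not reach $\sqrt D(\log D)^2$.

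The fix is to count locally and exactly. Let $\rho(A)=\#\{B\bmod 2A:\ B^2\equiv -D\pmod{4A}\}$; it is multiplicative. For odd $p$:
\begin{itemize}
\item if $p\nmid D$, then $\rho(p^e)\le 2$;
\item if $p\mid D$ and $p^2\nmid D$, then $\rho(p)=1$ and $\rho(p^e)=0$ for $e\ge2$;
\item if $p^2\mid D$, then $\rho(p)=1$ and $\rho(p^2)=p$;
\item in all cases $\rho(p^e)\le 2p^{\lfloor e/2\rfloor}$.
\end{itemize}
The local factor at $2$ is $O(1)$. Hence $\sum_{e\ge0}\rho(p^e)p^{-e}\le 1+2/p+O(p^{-2})$ uniformly in $D$. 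By Mertens, $\sum_{A\le\sqrt{D/3}}\rho(A)/A\le\prod_{p\le\sqrt D}\bigl(1+2/p+O(p^{-2})\bigr)\ll(\log D)^2$. Large counts such as $\rho(p^2)=p$ are harmless because they are divided by $p^2$.

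This is essentially the estimate behind Enge's theorem. With it inserted, your argument is a self-contained proof that bounds the value $P_D(j_E)$ directly, without passing through coefficient heights. The paper's proof is shorter only because it outsources exactly this counting to Enge.
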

\begin{proof}

We apply the logarithmic height bound for Hilbert class polynomials \cite[Theorem 1.2]{MR2476572}:
\[
\mathbf{h}(P_D)\ll \sqrt{D}(\log D)^2, 
\]
where $\mathbf{h}(P_D)$ denotes the logarithmic Weil height of the vector of coefficients of  $P_D(X)$. 
Since  $\deg P_D(X)=h(D)$ and $h(D)\ll \sqrt{D}\log D$, evaluating $P_D(X)$ and $j_E$ and taking the numerator,  we see that 
\[
\log \Num|P_D(j_E)|\ll_E h(D) + \mathbf{h}(P_D)\ll  \sqrt{D}(\log D)^2.
\]
This completes the proof. 
\end{proof}
\begin{remark}
   Let $\ell$ be a prime. If  $D$ is either  $\ell$ or $4\ell$, then Fouvry and Murty \cite[Lemma 5]{MR1382477} obtain a similar upper bound using a different method.
\end{remark} 

\subsection{Equidistribution related to Hecke correspondences}\label{sec:equidistribution}
We keep most of the notation from \cite[Section 3]{MR2017146} and \cite[Section 2]{MR3843371}. Let  $X(1)$ be the coarse moduli scheme over $\operatorname{Spec}\Z$ of generalized elliptic curves. The modular $j$-function induces an isomorphism $X(1)\simeq \mathbb{P}^1_\Z$, and we use this identification throughout.

For a positive integer $N$, we denote by $t_{N}\subset X(1)\times_\Z X(1)$ the Hecke correspondence of order $N$, and let $e_N:=N\prod_{p\mid N}(1+1/p)$.  Hecke correspondences also induce a map  $t_{N*}$ from closed points of  $X(1)$ to the divisor group of $X(1)$. For a complex $j$-invariant $y$, we write the $N$-th Hecke orbit of $y$ as
\[
t_{N*}y:= j_1+\cdots+j_{e_N}, 
\]
where the sum runs over the $j$-invariants of the  $N$-Hecke points of $y$. The set of $N$-Hecke points associated to $y$ is denoted by $|t_{N*}y|$ and for each $\alpha\in |t_{N*}y|$,  we denote by  $j_\alpha$ its  $j$-invariant. Using the identification $X(1)(\C)\simeq \SL_2(\Z)\backslash\overline{\mathbb{H}}$, where   $\mathbb{H}$ is the upper half plane and $\overline{\mathbb{H}}=\mathbb{H}\cup (\Q\cup \{\infty\})$,  we denote by $\tau_\alpha$ the image of $\alpha$ in $\SL_2(\Z)\backslash\overline{\mathbb{H}}$. 

Now fix a metrized line bundle on  $X(1)$. Denote by $\widehat{\mathcal{L}}:=(\mathcal{L}, \|\cdot \|)$ the metrized line bundle with $\mathcal{L}$ the usual line bundle of modular forms of weight 12 on $X(1)$. The modular discriminant $\Delta(\tau)$ induces a global section on $\mathcal{L}$. Let $Z$ be a horizontal divisor of relative degree $d$ on $X(1)_\Z$ with complex fiber $Z_\C=\sum_{1\le i\le k} n_i Q_i$ with $Q_i\in X(1)(\C)$.  Assuming $j(Q_i)\neq \infty$, we get a rational section $s_Z$ on  $\mathcal{L}^{\otimes d}$:
\[
s_Z(\tau):=\prod_{1\le i\le k} \left((j(Q_i)-j(\tau))\Delta(\tau))\right)^{n_i}.
\]
Finally,  for a purely horizontal divisor $Y$ of degree $d'$ on $X(1)_{\Z}$ with $Y_\C:=\sum_{1\le j\le t} m_jP_j$  and $P_i\in X(1)(\C)$, define 
  \[
  s_Z(Y):=\prod_{1\le j\le t} s_Z(P_j)^{m_j}.
  \]
  The $L_1^2$-singular Hermitian metric $\| \cdot \|$ on $\mathcal{L}_\C$ is induced by the  Petersson norm on modular forms that satisfies  
\[
\| \Delta(\tau) \|:= |\Delta(\tau)||\Im(\tau)|^6,
\]
where $|\cdot |$ is the complex absolute value. 
For the horizontal divisors $Z:=\sum_{1\le i\le k}n_i Q_i$ and $Y=\sum_{1\le j\le t} m_jP_j$, denote by  
\[
\| s_Z(Y)\|=\prod_{1\le j\le  t}\prod_{1\leq i\leq k }\left(|j(Q_i)-j(P_j)| \|\Delta(P_j)\|\right)^{n_im_j}.
\]

For a zero-cycle $R:=\sum_{s} w_sR_s$ on $X(1)_\Z$, where $R_s$ are closed points of $X(1)_\Z$, we define the (finite) arithmetic degree  of $R$ as 
\[
\widehat{\deg}(R):=\sum_{s}w_s\log \# \kappa(R_s),
\]
where $\kappa(R_s)$ denotes the residue field of $R_s$. 
For two divisors $Y, Z$ on $X(1)$,  we denote by $\widehat{\deg}(Z.Y)$ the arithmetic degree of the   0-cycle $Z.Y$. This is a sum of local contributions $\deg_p(Z.t_{N*}Y)$ over all rational primes $p$, which is 0 for almost all $p$. If $Y$ and $Z$ are Zariski closures of   $y, z\in X(1)_\C$ in $X(1)_\Z$, then for a finite place $p$, there is  \cite[p.2047]{MR3843371} 
\begin{equation}\label{eq:local-degree}
\deg_p(Z.t_{N*}Y)=-\sum_{\alpha\in |t_{N*}y|}\log \left(\frac{|j_\alpha-z|_p}{\max\{1, |j_\alpha|_p\}\cdot \max\{1, |z|_p\}} \right) ,
\end{equation}
where $|\cdot |_p$  is the corresponding normalized, non-archimedean
absolute value defined on $\widehat{{\Q}^{\text{ur}}_p}$ so that $|p|_p=1/p$.  

We recall a result of Autissier \cite[Th\'eor\`em 3.2]{MR2017146} and Charles  \cite[Corollary 2.2]{MR3843371}, which plays an important role in the proof of \Cref{thm:upper-bound}.

\begin{proposition}[\normalfont{\cite[Corollary~2.2]{MR3843371}}]\label{prop:Charles}
 Let $Y, Z$ be purely horizontal divisors on $X(1)_{\Z}$ of relative degrees $d$ and $d'$, respectively. Assume that $Y$ is effective and irreducible, and that for every positive integer $N$,  the divisors $t_{N_*}Y$ and $Z$  have no common component. Then, for sufficiently large $N$,  
\begin{equation}\label{eq:Charles}
    \widehat{\deg}(Z.t_{N*}Y)- \log||s_Z(t_{N*}Y)||= 6dd' e_N\log N+O\left(dd'e_N\left(\log\log N+h_{\widehat{\mathcal{L}^{\otimes d}}}'(Y)\right) \right),
    \end{equation}
    where $h'_{\widehat{\mathcal{L}^{\otimes d}}}(Y)$ is the normalized height of $Y$ relative to $\widehat{\mathcal{L}^{\otimes d}}$.
\end{proposition}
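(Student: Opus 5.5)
The plan is to read \eqref{eq:Charles} as an instance of the arithmetic intersection formula for the metrized line bundle $\widehat{\mathcal{L}}^{\otimes d'}$, combined with an asymptotic for the height of Hecke orbits. By construction, $s_Z$ is a rational section of $\mathcal{L}^{\otimes d'}$, since $Z$ has relative degree $d'$ and $j(\tau)$ has a simple pole at the cusp, cancelled by the simple zero of $\Delta(\tau)$. Its divisor on $X(1)_\Z$ is $Z$ plus at most a vertical divisor supported over finitely many primes, coming from denominators of the $j(Q_i)$; this contributes $O(d'\cdot\deg)$ to the intersection with any horizontal divisor.

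The first step is the hypothesis that $t_{N*}Y$ and $Z$ share no component, which makes the intersection proper. The arithmetic intersection formula, in the version of Bost--Kühn for the log-singular $L_1^2$ metric, then gives
\[
h_{\widehat{\mathcal{L}}^{\otimes d'}}(t_{N*}Y)=\widehat{\deg}(Z.t_{N*}Y)-\log\|s_Z(t_{N*}Y)\| + O(d'\,\deg t_{N*}Y).
\]
Here the local contributions at finite $p$ are exactly \eqref{eq:local-degree}, and the archimedean contribution is $-\log\|s_Z\|$ evaluated on the complex points of $t_{N*}Y$. Since $\deg t_{N*}Y=d e_N$ and $h_{\widehat{\mathcal{L}}^{\otimes d'}}=d'h_{\widehat{\mathcal{L}}}$, everything reduces to proving
\[
h_{\widehat{\mathcal{L}}}(t_{N*}Y)=e_N\bigl(6d\log N+O(d\log\log N+h'(Y))\bigr).
\]

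The second step, which is Autissier's theorem, computes this height via Faltings heights. Up to bounded error, the height relative to $\widehat{\mathcal{L}}$ of a point $y$ is $12\,h_F(E_y)$, with the weight $12$ Petersson metric matching the Faltings metric. For each cyclic subgroup $C\subset E_y$ of order $N$, the isogeny formula gives
\[
h_F(E_y/C)=h_F(E_y)+\tfrac12\log N-\log\#(\text{contribution of }\omega \text{ along } C),
\]
the last term being a local correction at primes dividing $N$. Summing over the $e_N$ cyclic subgroups, and using the multiplicativity of $t_N$ together with a prime-by-prime computation at $p\mid N$, I would show that the averaged correction is $O(\sum_{p\mid N}\frac{\log p}{p})=O(\log\log N)$. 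The total is then $e_N(h_F(E_y)+\frac12\log N+O(\log\log N))$, and multiplying by $12$ gives the main term $6e_N\log N$. Galois-averaging over the components of $Y$ turns $h_F(E_y)$ into the normalized height $h'(Y)$.

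The main obstacle is the archimedean side in the first step. The $L_1^2$ metric is singular at the cusp, so one must justify the intersection formula with log-log singularities (Kühn's extension of arithmetic intersection theory). One also needs the Hecke points to avoid the bad region near the cusp well enough that the error stays $O(dd'e_N\log\log N)$. I would handle this by truncating at height $\Im\tau\le N$, where Hecke points of a fixed $y$ can lie, and bounding the contribution there by $\log N$ per point times the proportion of such points, which is $O(1/N)$ up to log factors.
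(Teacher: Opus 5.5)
Your outline is correct and follows essentially the route of the cited sources; the paper gives no proof of its own and simply quotes Autissier and Charles. As you propose, one expresses $h_{\widehat{\mathcal{L}}^{\otimes d'}}(t_{N*}Y)$ through the section $s_Z$ and inserts Autissier's formula for heights of Hecke orbits, which is a Szpiro--Ullmo-type average of the Faltings isogeny formula: over a field of definition $F$ one has $\sum_{C\subset E[p]}\frac{1}{[F:\Q]}\log\#\omega_C=\log p$ exactly for ordinary and supersingular reduction alike, giving average correction $\frac{\log p}{p+1}$, and multiplicativity plus a recursion handles general $N$.

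Your ``main obstacle'' is not a real one and should be dropped. Since $t_{N*}Y$ never meets the cusp or $Z$, the identity $h_{\widehat{\mathcal{L}}^{\otimes d'}}(t_{N*}Y)=\widehat{\deg}(Z.t_{N*}Y)-\log\|s_Z(t_{N*}Y)\|+(\text{vertical term})$ is just the definition of the height of a horizontal divisor and holds exactly. No log-singular intersection theory (K\"uhn) or truncation near the cusp is needed. The vertical term is $O_Z(de_N)$, depending on the denominators of $Z$ rather than only on $d'$, and it vanishes when $j_E\in\Z$ as in the paper's application.
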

The left-hand side of \eqref{eq:Charles} is a global height pairing and sometimes called the \textit{global intersection number}. It decomposes into local contributions: the non-archimedean contributions are given by the local intersection multiplicities on the fibers at finite primes
 $p$, while the archimedean contribution at $\infty$ is given by $-\log||s_Z(t_{N*}Y)||$.

\section{The lower bound}\label{sec:Theorem2}

Let $E/\Q$ be an elliptic curve such that $j_E$ is an integer.
 Let $\Delta\geq 1$ be a fixed squarefree integer, and set  $K=\Q(\sqrt{-\Delta})$. Define 
\begin{equation}\label{eq:deltastar}
    \Delta^*:=\begin{cases}
    \Delta & \text{ if $\Delta\equiv 3\pmod 4$}\\
    4\Delta & \text{ if $\Delta\equiv 1, 2\pmod 4$}
\end{cases},
\end{equation}
and recall that 
\[
\disc(\mathcal{O}_K)=-\Delta^*.
\]

In this section, we study whether the set
\[
\mathcal{E}(E):=\{p: \text{ $E$ has good ordinary reduction at $p$ and }  \Q(\pi_p(E_p))= \Q(\sqrt{-\Delta}) \}
\]
is infinite, and if so, we aim to obtain an asymptotic lower bound for  
\[
\pi_E(x, \Q(\sqrt{-\Delta})):=\#\{p\leq x: p\nmid N_E, \Q(\pi_p(E_p))=\Q(\sqrt{-\Delta})\}.
\]

First, we remark that replacing  $E/\Q$ by its  quadratic twist  $E'/\Q$ does not affect the conclusions of the two problems under consideration.  This is because for any prime $p\nmid N_EN_{E'}$, we have $a_p(E)=\pm a_p(E')$,  and hence 
 \[
 \Q(\pi_p(E_p))=\Q(\sqrt{a_p(E)^2-4p})=\Q(\sqrt{a_p(E')^2-4p})=\Q(\pi_p(E'_p)).
 \]
 If \(p\) is an ordinary prime of \(E\), then \(a_p(E)\neq 0\), and hence \(a_p(E')\neq 0\) as well. Therefore, \(p\) is also an ordinary prime of \(E'\). It follows that if $p\in \mathcal{E}(E)$ and $p\nmid N_{E'}$, then $p\in \mathcal{E}(E')$. For this reason, we may replace $E$ by a suitable quadratic twist without affecting the asymptotic questions, and we  simply write  $\mathcal{E}$  when no ambiguity arises. From the discussion above, we have $\mathcal{E}\subseteq \mathcal{E}(E)\cup \{p: p\mid N_E\}$.
 
 the sets \(\mathcal{E}(E)\) and \(\mathcal{E}(E')\), as well as  the corresponding counting functions \(\pi_E(x,\Q(\sqrt{-\Delta}))\) and \(\pi_{E'}(x,\Q(\sqrt{-\Delta}))\), differ by at most finitely many primes dividing \(N_EN_{E'}\).

Now, we show why we only focus on ordinary primes. Let $p$ be a  supersingular prime  of  $E/\Q$. If $p\geq 5$, one has $a_p(E)=0$, hence the Frobenius polynomial is $P_{E, p}(X)=X^2+p$, and we obtain  $\Q(\pi_p(E_p))=\Q(\sqrt{-p})$. Thus, if  $p\in \mathcal{E}$, then
\[
\Delta=pf^2, \quad f\in \Z. 
\]
Since $\Delta$ is squarefree, this forces  $\Delta=p$, i.e., $\Delta$ is a rational prime. If  $p=2$, one has $a_p(E)\in \{0, \pm 2\}$, so 
\[\Q(\pi_p(E_p))\in \{\Q(i), \Q(\sqrt{-2})\}.
\]
Therefore,  $2\in \mathcal{E}$ implies $\Delta\in \{1, 2\}$. Finally, if $p=3$,  then  $a_p(E)\in \{0, \pm 3\}$ and $\Q(\pi_p(E_p))= \Q(\sqrt{-3})$. Hence if $3\in \mathcal{E}$, then  $ \Delta=3. $ From the discussion above, assuming $\Delta$ is not a rational prime, we have 
\[
\pi_{E}(x, \Q(\sqrt{-\Delta}))\leq \#\{p\leq x: p\in \mathcal{E}(E)\}+1.
\]

Finally, we explain the role of the assumption in \Cref{thm:lower-bound} that, for every odd prime \(p\mid N_E\), the Kodaira symbol of \(E\) at \(p\) is \(\operatorname{I}_0^*\).   Note that the condition \(j_E\in \mathbb{Z}\) is equivalent to \(E\) having potentially good reduction at every prime; in particular, this holds at every prime \(p\mid N_E\).  Viewing $E$ as an elliptic curve over the local field $\Q_p$,  Tate's algorithm \cite{MR393039} shows that  if $p\geq  3$,  then the Kodaira symbol at each prime $p$ is one of $\operatorname{I}_0^*, \operatorname{II}, \operatorname{II}^*, \operatorname{III}, \operatorname{III}^*, \operatorname{IV}, \operatorname{IV}^*$. 
Moreover, by the proof of  \cite[Proposition 2]{MR1295951},   
if $p\geq 3$ and the Kodaira symbol at $p$ is $\operatorname{I}_0^*$,  then there exists a local quadratic character $\chi_p$ ramified at $p$, such that the quadratic twist $E^{\chi_p}$ attains good reduction at $p$.

\subsection{Constructing new primes from old ones} \label{sec:newprimes}

The main result of this section is \Cref{cor:set-infinite}, which shows that the set $\mathcal{E}$ is infinite under certain assumptions on $j_E$ and $\Delta$. 

We begin with a proposition that is needed for the proof  of the infinitude of $\mathcal{E}$. This result may be of independent interest, as it is closely related to the problem of showing that singular moduli, and more generally differences of singular moduli, are not units \cite{MR4190395,MR4251608}. The proof uses the ``hard" lower bound for the Weil height of singular moduli  \cite[Proposition 4.3]{MR4190395} with the following lemma, which shows that the average value of $\log|j-j_E|$ as $j$ ranges over singular moduli of a fixed discriminant, cannot be too negative. 

\begin{lemma}\label{lem:singular-moduli}
  Let $E/\Q$ and $\Delta^*$ be the same as given above. For each $f\in \Z_{\geq 1}$ and  $1\le t\le h(f^2\Delta^*)$, let $\{j_{m_1}, \ldots j_{m_t}\}$ be a subset of singular moduli of discriminant $f^2\Delta^*$, such that for each $1\le k\le t$, 
  \[
  |j_{m_k}-j_E|<1,
  \]
  where $|\cdot |$ is the usual complex absolute value. 
  Then, for any $\delta>0$, there exists an ineffective absolute constant $c_0$ depending only on $E, \Delta$, and $\delta$, such that for all $f>c_0$,
  \[
  \sum_{1\leq k\leq t} \log |j_{m_k}-j_E|\geq -\delta  h(D_f)\log D_f.
  \]
  If we assume GRH for the relevant Dirichlet $L$-functions, then the constant $c_0$ can be made effective.
\end{lemma}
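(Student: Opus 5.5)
The plan is to pass from $j$-invariants to points of the upper half plane, count how many CM points of discriminant $D_f:=f^2\Delta^*$ can lie near a preimage of $j_E$, and then sum the contributions dyadically in the distance.

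\textbf{Step 1: linearize near $j_E$.} Since $E$ is non-CM, $j_E\notin\{0,1728\}$. Choose $\tau_E$ in the standard fundamental domain with $j(\tau_E)=j_E$; it is not an elliptic point, so $j'(\tau_E)\neq 0$. Because $j$ is $\SL_2(\Z)$-invariant and $j-j_E$ has a simple zero at $\tau_E$, there is $\eta=\eta(E)>0$ with the following property. Every singular modulus $j_{m}=j(\tau_m)$, with $\tau_m=(-b+\sqrt{-D_f})/(2a)$ the reduced representative of the form $(a,b,c)$, satisfying $|j_m-j_E|<1$ has a representative $\tau'_m$ in the $\SL_2(\Z)$-orbit of $\tau_m$ lying in one of finitely many translates of an $\eta$-neighbourhood of $\tau_E$, and there
\[
|j_m-j_E|\asymp_E |\tau'_m-\gamma\tau_E|.
\]
Points with $\eta'\le|j_m-j_E|<1$, for a small fixed $\eta'$, contribute only $O(1)$ each. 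There are at most $h(D_f)$ of them, so their total is $-O(h(D_f))$, which is $o(h(D_f)\log D_f)$. It therefore suffices to treat $\tau_m$ within distance $r\le\eta$ of $\tau_E$; by the finiteness above we may assume the relevant translate is $\tau_E$ itself.

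\textbf{Step 2: counting CM points near $\tau_E$.} Write $D=D_f$. The condition $|\tau_m-\tau_E|<r$ forces $\sqrt D/(2a)$ to lie within $r$ of $\Im\tau_E$. Hence $a$ ranges over an interval of length $\ll r\sqrt D$ around $a_0\asymp\sqrt D$. Also $b/(2a)$ lies within $r$ of $\Re\tau_E$, which confines $b$ to an interval of length $\ll r\sqrt D$. The integrality of $c=(b^2+D)/(4a)$ leaves at most $2^{\omega(a)+1}\ll D^{\epsilon}$ residues $b\bmod 4a$. This gives
\[
N(r):=\#\{m:|\tau_m-\tau_E|<r\}\ll_\epsilon D^{\epsilon}\left(1+r^2\sqrt D\right).
\]
Moreover, two distinct CM points $\tau,\tau'$ of discriminant $D$ with $a,a'\asymp\sqrt D$ satisfy $|\tau-\tau'|\gg 1/(aa')\gg 1/D$. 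So at most $O(1)$ points lie within $c/D$ of $\tau_E$.

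\textbf{Step 3: dyadic summation.} Split the range $D^{-1}\le r\le\eta$ into dyadic shells $r\asymp 2^{-k}$. Each point in a shell contributes $\ge -k\log 2-O(1)$, and the shell contains $\ll D^{\epsilon}(1+4^{-k}\sqrt D)$ points. Summing over $k\le \log_2 D$ gives
\[
\sum \log|j_{m_k}-j_E|\ \ge\ -O_\epsilon\!\left(D^{\epsilon}\sqrt D+D^{\epsilon}(\log D)^2\right)
\]
from these points. By Siegel's theorem, $h(D_f)\gg_\epsilon D_f^{1/2-\epsilon}$; the constant here is ineffective, and it becomes effective under GRH. The right-hand side of Siegel's bound is then made $\ge -\tfrac{\delta}{2}h(D_f)\log D_f$ once $f>c_0(E,\Delta,\delta)$, after taking the counting exponent $\epsilon$ smaller than the Siegel exponent. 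To make this work, the counting in Step 2 should be done with some care: one sums $2^{\omega(a)}$ over $a$ in an interval of length $r\sqrt D$ rather than bounding it trivially, so that the divisor loss is genuinely $D^{o(1)}$.

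\textbf{Step 4: the main obstacle.} The remaining difficulty is the $O(1)$ points with $|\tau_m-\tau_E|<c/D$. For each such point one needs $\log|j_m-j_E|\ge -o(h(D)\log D)$, that is, a lower bound for the distance from the fixed transcendental point $\tau_E$ to quadratic irrationalities of discriminant $D$ that is much better than Liouville. My first attempt is a norm argument. Since $P_{D}(j_E)$ is a nonzero integer, $\sum_{\text{all }m}\log|j_m-j_E|\ge 0$. Then bound the sum over all non-extreme conjugates from above using $\log|j_m-j_E|\le \pi\sqrt D/a+O(1)$ together with the lower bound from Step 3 applied to the other close points. This yields only $\ge -O(h\log D)$, not $-\delta h\log D$. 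To gain the factor $\delta$, I would try to compare $P_{D_f}(j_E)$ with $P_{D_{f'}}(j_E)$ for nearby conductors, or use the Galois (class group) action to move the extreme conjugate while keeping the others controlled. I expect this step to be the crux, and it is where Siegel's theorem must be used sharply.
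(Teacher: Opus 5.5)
\paragraph{There is a genuine gap: your Step 4 is the core of the lemma, and it is left unresolved.} After Steps 2--3, everything depends on showing that the $O(1)$ singular moduli closest to $j_E$ are not extremely close, i.e.\ that each contributes only $-o(h(D_f)\log D_f)$.

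Your norm argument uses only $P_{D_f}(j_E)\in\Z\setminus\{0\}$. It therefore gives at best $\log|j_{m_0}-j_E|\ge -O(h(D_f)\log D_f)$. It cannot exclude a single conjugate with $|j_{m_0}-j_E|\approx \exp(-c\,h(D_f)\log D_f)$ for some small fixed $c>0$. Such a conjugate is compatible with integrality, because the remaining factors have product of size $\exp(\gg h(D_f)\log D_f)$, yet it violates the lemma for $\delta<c$.

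The Galois idea does not help either. Since $j_E\in\Q$, the Galois action only permutes the factors $j_m-j_E$, so the multiset $\{|j_m-j_E|\}$ is unchanged and gives no new information.

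The missing ingredient is a transcendence-type input, which the paper imports rather than proves. It is a uniform bound
\[
\log|j_E-j|\ge -c_1\log D
\]
for every singular modulus $j$ of discriminant $D$, with $c_1$ depending only on $E$. The paper takes this from Habegger \cite{MR3404647} (Lemmas 5, 8 and Eq.~(11)) or from \cite{MR4713026} (Theorem C). With it, the proof is a simple split at $\epsilon=D_f^{-\eta}$:
\begin{itemize}
\item Points with $|j_m-j_E|>\epsilon/c_{\mathcal J}$ cost at most $\eta\log D_f+O(1)$ each. Using only the trivial count $\le h(D_f)$, their total cost is at most $\eta h(D_f)\log D_f+O(h(D_f))$.
\item Points lying in the $D_f^{-\eta}$-disc around $\tau_E$ number $\ll D_f^{(1-\eta)/2}$ by Cai's counting lemma \cite{MR4276350}. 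Each costs at most $c_1\log D_f$.
\item Siegel's bound $h(D_f)\gg_\eta D_f^{1/2-\eta/4}$ then makes the second group's total $o(h(D_f)\log D_f)$.
\end{itemize}
No dyadic decomposition is needed.

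\paragraph{Step 3 also fails as written, even granting Step 4.} Your bound $-O_\epsilon(D^{\epsilon}\sqrt D+D^\epsilon(\log D)^2)$ is not $o(h(D)\log D)$, because $h(D)\ll\sqrt D\log D$. Siegel's theorem gives only $h(D)\gg D^{1/2-\epsilon'}$, so it cannot absorb a $D^{+\epsilon}$ loss, whatever the relative sizes of $\epsilon$ and $\epsilon'$. Averaging $2^{\omega(a)}$ does not cure this: for $r\asymp 1$ the sum is still $\asymp\sqrt D\log D$, which can exceed $\delta h(D)$.

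The fix is to bound the shells with $r\ge D^{-\eta}$ by the trivial count $\le h(D)$, at a cost of at most $\eta\log D$ per point. The counting lemma should be used only below that threshold.

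A smaller point: your per-$a$ argument in Step 2 gives $D^{o(1)}(1+r\sqrt D)$, not $D^{o(1)}(1+r^2\sqrt D)$. Getting the $r^2$ would require equidistribution of the roots $b \bmod 4a$; Cai's bound indeed carries an $\epsilon D^{1/2}$ term. This is harmless once the split is done correctly.
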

\begin{proof}
Denote by $D_f:=f^2\Delta^*$. For each $1\le k\le t$, let $\tau_E$ and $\tau_{m_k}\in \mathbb{H}$ be such that $j(\tau_E)=j_E$ and $j(\tau_{m_k})=j_{m_k}$, respectively.  

Since $j_E\neq 0, 1728$, we have  $j'(\tau_E)\neq 0$. By the inverse function theorem,  there exist a neighborhood $U_E$ of $\tau_E$ for which $j(\tau)$ is analytic, and a constant $c_{\mathcal{J}}>0$ (depending only on $E$) such that 
\begin{equation}\label{eq:j-tau-relation}
    |\tau_{m_k}-\tau_E|\le c_{\mathcal{J}}  |j_{m_k}-j_E|.
\end{equation} 
Assume $j(U_E)\supseteq B_{j_E}(r_E)$ for some $r_E<1$, where $B_{j_E}(r_E)\subseteq \mathbb{C}$ is the ball of radius $r_E$ around $j_E$.  


Let $\{n_1, \ldots, n_{t'}\}$ be a subset of $\{m_1,\ldots, m_t\}$ such that  $|j_{n_k}-j_E|\geq r_E$ for every $1\le k\le t'$.  Then 
\[
\sum_{1\le k\le t'} \log |j_{n_k}-j_E|\geq \log r_E\cdot h(D_f).
\]
This is stronger than the desired bound we want to prove. Therefore, it suffices to obtain a lower bound  for $\log |j_{m_k}-j_E|$ when $|j_{m_k}-j_E|<r_E$. Moreover, since $\log |j_{m_k}-j_E|<0$, it suffices to assume $|j_{m_k}-j_E|<r_E$ for every $1\le k\le t$, and show that $\sum_{1\leq k\leq t} \log |j_{m_k}-j_E|$ satisfies the desired lower bound.

First, for each $1\leq k\leq t$,  we give a ``trivial" lower bound for $|j_{m_k}-j_E|$ by applying \cite[Lemma 5, Lemma 8, and Eq (11)]{MR3404647} (or \cite[Theorem C]{MR4713026}). It follows that there exists a constant $c_1>0$, depending only on $E$, such that 
\begin{align}\label{eq:least-lower-bound}
\log |j_E-j_{m_k}|\ge -c_1\log D_f.
\end{align}

Next, we show that for most $k\in\{1,\ldots,t\}$, there is a better lower bound. For any $\epsilon>0$, denote by  
\[
\mathcal{J}(\epsilon):=\{1\le k\le t: |j_{m_k}-j_E|\leq \epsilon\}, \;\;  \mathcal{D}(\epsilon):=\{1\le k\le t: |\tau_{m_k}-\tau_E|\leq \epsilon\}.
\]
Using \eqref{eq:j-tau-relation}, it follows that 
\begin{equation}\label{eq:tau-j}
\#\mathcal{J}(\epsilon/c_{\mathcal{J}})\leq \#\mathcal{D}(\epsilon).
\end{equation}
By \cite[Corollary 3.2]{MR4276350}, we obtain that for all sufficiently large $f$ and $0<\epsilon<1/4$, 
\[
\#\mathcal{D}(\epsilon)\leq 50F\left(D_f\right)\left(\epsilon^2  D_f^{\frac{1}{2}}\log\log (D_f^{\frac{1}{2}})+ \epsilon D_f^{\frac{1}{2}}+1 \right)
\]
where 
\[
F(D_f):=\max\{2^{\omega(n)}: n\leq D_f^{\frac{1}{2}}\},
\]
and $\omega(n)$ is the number of distinct prime divisors of $n$. 

Fix some $\eta>0$. Using the bound $2^{\omega(m)}\leq \exp(\frac{1.3841\log m}{\log\log m})$ \cite[Th\'eor\`em 11]{MR736719}
and taking $\epsilon=D_f^{-\eta}$,  we obtain that for sufficiently large $f$ (depending only on $\Delta$ and $\eta$) 
\begin{align}\label{eq:number-bad-bound-is-small}
\#\mathcal{D}(\epsilon) & \leq 50\exp\left(\frac{\log D_f}{\log\log D_f^{\frac{1}{2}}}\right)\left(D_f^{\frac{1}{2}}\exp\left(\log \log\log (D_f^{\frac{1}{2}})-2\eta(\log D_f)\right)+D_f^{\frac{1}{2}}\exp(-\eta(\log D_f))+1\right) \nonumber \\
& \leq 50\left(D_f^{\frac{1}{2}}\exp\left(\log \log\log (D_f^{\frac{1}{2}})+\frac{\log D_f}{\log\log D_f^{\frac{1}{2}}}-2\eta(\log D_f)\right) \right. \nonumber \\
& \hspace{200pt} \left.+D_f^{\frac{1}{2}}\exp\left(\frac{\log D_f}{\log\log D_f^{\frac{1}{2}}}-\eta(\log D_f)\right)+1\right) \nonumber \\
& \leq 50\left(D_f^{\frac{1}{2}}\exp\left(-\eta(\log D_f)\right)+D_f^{\frac{1}{2}}\exp\left(-\frac{\eta(\log D_f)}{2}\right)\right) \nonumber \\
& \leq 100 D_f^{\frac{1}{2}}\exp\left(-\frac{\eta(\log D_f)}{2}\right)=100 D_f^{\frac{1-\eta}{2}}.
\end{align}

Therefore for $\epsilon=D_f^{-\eta}$, using \eqref{eq:least-lower-bound}, \eqref{eq:tau-j},  and \eqref{eq:number-bad-bound-is-small}, we obtain 
\begin{align}\label{eq:GRH}
    \sum_{1\leq k\leq t}\log |j_{m_k}-j_E| & =   \sum_{\substack{1\le k\le t \\ k\notin \mathcal{J}(\epsilon/c_{\mathcal{J}})}}\log |j_{m_k}-j_E|+ \sum_{\substack{1\le k\le t \\ k\in \mathcal{J}(\epsilon/c_{\mathcal{J}})}}\log |j_{m_k}-j_E| \nonumber \\
    & \geq -\eta  h(D_f)\log (D_f/c_{\mathcal{J}}) -c_1 \#\mathcal{J}(\epsilon/c_{\mathcal{J}}) \log D_f \nonumber \\
   &  \geq -\eta  h(D_f)\log D_f -\eta h(D_f)\log (1/c_{\mathcal{J}})-100c_1 D_f^{\frac{1-\eta}{2}}\log D_f.
\end{align}
Unconditionally, by Siegel's lower bound for class numbers of imaginary quadratic fields, there is an ineffective constant $c(\eta)$ such that  $h(D_f)\geq c(\eta) D_f^{\frac{1}{2}-\frac{\eta}{4}}$. Hence, \eqref{eq:GRH} becomes
\[
\geq  -\eta  h(D_f)\log D_f -\eta h(D_f)\log (1/c_{\mathcal{J}})-\frac{100c_1}{c(\eta)} h(D_f)(\log D_f) D_f^{-\frac{\eta}{4}}\geq  -2\eta  h(D_f)\log D_f
\] 
 for all $f>c_0$, where $c_0$ is an ineffective constant that depends only on $E, \Delta$ and $\eta$. The desired bound follows by taking $\eta=\delta/2$.

Under GRH, there is an absolute constant $c_2>0$ such that $h(D_f)\geq c_2 D_f^{1/2}/\log D_f$ (see, e.g., \cite[Corollary 1.3]{MR3356031}). In this case, \eqref{eq:GRH} becomes
\[
\geq  -\eta  h(D_f)\log D_f -\eta h(D_f)\log (1/c_{\mathcal{J}})-\frac{100c_1}{c_2} h(D_f)(\log D_f)^2D_f^{-\frac{\eta}{2}}\geq  -2\eta  h(D_f)\log D_f
\]
 for all sufficiently large $f$. The desired bound follows upon taking $\eta=\delta/2$.
 This  completes the proof of the lemma. 
\end{proof}

\begin{proposition}\label{lem:notpm1}
   Let $E/\Q$ be a non-CM elliptic curve with $j_E\in \Z$. Let $\Delta\geq 1$ be a squarefree integer and recall the definition    \eqref{eq:deltastar}. Then, there exists a constant $c(E, \Delta)>0$ depending only on $E$ and $\Delta$, such that for all  $f>c(E, \Delta)$,  
  \begin{equation}
P_{f^2\Delta^*}(j_E) := \prod_{1\leq k\leq h(f^2\Delta^*)}(j_E-j_k)\ne \pm 1,
\end{equation}
where $j_k$ are $j$-invariants of CM elliptic curves with CM by $\mathcal{O}_{f^2\Delta^*}$.
\end{proposition}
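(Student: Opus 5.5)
We argue by contradiction, comparing the archimedean size of $P_{f^2\Delta^*}(j_E)$ with the height of singular moduli. Write $D_f=f^2\Delta^*$ and suppose $P_{D_f}(j_E)=\pm1$. Since $j_E\in\Z$ and the $j_k$ are algebraic integers forming a full Galois orbit, the element $\alpha_f:=j_E-j_1$ is an algebraic integer. Its norm from $\Q(j_1)$ to $\Q$ is $\pm P_{D_f}(j_E)=\pm1$, so $\alpha_f$ is a unit. For a unit, the product formula gives
\[
\sum_{1\le k\le h(D_f)}\log|j_E-j_k| \;=\; 0 .
\]
Split this sum into three parts: the terms with $|j_k-j_E|<1$, the terms with $1\le|j_k-j_E|$ but $|j_k|$ of moderate size, and the terms where $|j_k|$ is large.

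First, apply \Cref{lem:singular-moduli} with a small parameter $\delta$ to the set of $k$ with $|j_k-j_E|<1$. For $f>c_0(E,\Delta,\delta)$ this bounds the negative contribution below by $-\delta h(D_f)\log D_f$.

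Second, for the remaining terms we need a large positive contribution. Using $\log|j_E-j_k|\ge\log^+|j_k|-\log(|j_E|+1)-\log 2$ whenever $|j_k|\ge 2|j_E|+2$, and $\log|j_E-j_k|\ge 0$ for the other terms with $|j_k-j_E|\ge 1$, we get
\[
\sum_{|j_k-j_E|\ge1}\log|j_E-j_k|\;\ge\;\sum_k\log^+|j_k| - O_E(h(D_f)).
\]
Since the $j_k$ are algebraic integers forming a Galois orbit, $\sum_k\log^+|j_k|=h(D_f)\,\mathrm{h}(j_1)$, where $\mathrm{h}$ is the absolute logarithmic Weil height. The ``hard'' lower bound \cite[Proposition 4.3]{MR4190395} gives
\[
\mathrm{h}(j_1)\;\ge\; c\log D_f - C
\]
for absolute constants $c,C>0$; asymptotically the constant is about $\frac{3}{\sqrt5}$, but any fixed $c>0$ suffices here.

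Combining the two estimates,
\[
0=\sum_k\log|j_E-j_k|\;\ge\; h(D_f)\bigl((c-\delta)\log D_f - O_E(1)\bigr).
\]
Choosing $\delta=c/2$, the right side is positive once $f$ exceeds a constant depending on $E$ and $\Delta$, which is a contradiction. This yields $c(E,\Delta)$, the maximum of $c_0(E,\Delta,c/2)$ and the threshold from the final inequality.

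The main obstacle is the bookkeeping in the second step. The cited height lower bound must be stated as an average of $\log^+|j_k|$ over the full conjugate set. The terms with $|j_k-j_E|\ge1$ but $|j_k|$ bounded must cost only $O_E(h(D_f))$, which they do since each is nonnegative while $\log^+|j_k|\le\log(2|j_E|+2)$ for them. The terms with $|j_k-j_E|<1$ are where the delicate equidistribution input of \Cref{lem:singular-moduli} enters. The argument never needs $E$ to be non-CM, beyond $j_E\neq0,1728$, which that lemma already uses.
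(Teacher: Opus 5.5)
Your argument is correct and is essentially the paper's proof: the terms with $|j_k-j_E|\ge 1$ are controlled by the Bilu--Habegger--K\"uhne height lower bound $\mathbf{h}(j_k)\ge \frac{3}{\sqrt5}\log D_f-9.79$, and the terms with $|j_k-j_E|<1$ by \Cref{lem:singular-moduli}, exactly as in the paper. The paper takes $\delta=1/\sqrt5$ and concludes directly that $|P_{f^2\Delta^*}(j_E)|\ge \exp\bigl(\frac{1}{\sqrt5}h(f^2\Delta^*)\log(f^2\Delta^*)\bigr)$ instead of arguing by contradiction, and your unit/product-formula step is unnecessary, since $\log|P_{D_f}(j_E)|=\sum_k\log|j_E-j_k|$ holds by definition.
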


    \begin{proof}
Recall that 
  \begin{equation}\label{eq:product}
P_{f^2\Delta^*}(j_E) = \prod_{1\leq k\leq h(f^2\Delta^*)}(j_E-j_k).
\end{equation}
By the ``strong" lower bound for the Weil height $\mathbf{h}(\cdot)$ \cite[Section 3]{MR4190395}  of the singular moduli, we have  \cite[Proposition 4.3]{MR4190395}  
\begin{equation}\label{eq:high-lower-bound}
\mathbf{h}(j_k):=\frac{1}{h(f^2\Delta^*)}\sum_{1\le s\le h(f^2\Delta^*)}\log\max\{1, |j_s|\}\geq \frac{3}{\sqrt{5}}\log |f^2\Delta^*|-9.79
\end{equation}
for any $1\leq k\leq h(f^2\Delta^*)$, where $|\cdot|$ is the usual complex absolute value.

Since $|j_E|\geq 1$, we have 
\begin{align*}
\frac{1}{h(f^2\Delta^*)}\sum_{1\le s\le h(f^2\Delta^*)}\log\max\{1, |j_s|\} &\leq \frac{1}{h(f^2\Delta^*)} \sum_{1\le s\le h(f^2\Delta^*)}\log\max\{1, |j_s-j_E|+|j_E|\}\\
& \leq \frac{1}{h(f^2\Delta^*)} \sum_{1\le s\le h(f^2\Delta^*)}\log \left(|j_s-j_E|+|j_E|\right).
\end{align*}
After reordering the singular moduli, let $k_0':=k_0'(f)\geq 1$ be the least integer such that 
\[
|j_E-j_{k}|\geq 1 \text{ for $ k< k_0'$}  \quad \text{ and } \quad   |j_E-j_{k}|<1 \text{ for $k\geq k_0'$.}
\]

Together with \eqref{eq:high-lower-bound}, we obtain that for all sufficiently large $f$,
\begin{align*}
 \frac{3}{\sqrt{5}}\log |f^2\Delta^*|-9.79 & \leq \frac{\sum_{1\le s\le h(f^2\Delta^*)}\log \left(|j_s-j_E|+|j_E|\right)}{h(f^2\Delta^*)} \\
 & \leq \frac{\sum_{1\le s\le k_0'-1}\log \max\{|j_s-j_E|,|j_E|\}+\log 2}{h(f^2\Delta^*)}+\frac{\sum_{k_0'\le s\le h(f^2\Delta^*)}\log \left(1+|j_E|\right)}{h(f^2\Delta^*)}\\
  & \leq \frac{\sum_{1\le s\le k_0'-1}\log |j_s-j_E|}{h(f^2\Delta^*)}+ \frac{\sum_{1\le s\le k_0'-1}\log |2j_E|}{h(f^2\Delta^*)}+\frac{\sum_{k_0'\le s\le h(f^2\Delta^*)}\log \left(1+|j_E|\right)}{h(f^2\Delta^*)}.
\end{align*}
For $f$ sufficiently large, we see that the last two terms in the right hand side of the inequality are $O_E(1)$. In particular,  we derive that $k'_0\geq 2$ and 
\begin{equation}\label{eq:sum-lower-bound}
    \sum_{1\le s\le k_0'-1}\log |j_s-j_E| \geq \left(\frac{3}{\sqrt{5}}-o(1)\right){h(f^2\Delta^*)}\log |f^2\Delta^*|\geq \frac{2}{\sqrt{5}}{h(f^2\Delta^*)}\log |f^2\Delta^*|
\end{equation}
for all sufficiently large $f$  (depending on $E$ and $\Delta$).

On the other hand,  applying \Cref{lem:singular-moduli} to the subset  $\{j_{k_0'}, \ldots, j_{h(f^2\Delta^*)}\}$ with  $\delta=\frac{1}{\sqrt{5}}$
gives
\begin{align}\label{eq:final-lower-bound}
  \sum_{k_0'\le s\le h(f^2\Delta^*)}\log |j_s-j_E|  \geq -\frac{1}{\sqrt{5}} h(f^2\Delta^*) \log |f^2\Delta^*|
\end{align}

Putting the two bounds \eqref{eq:sum-lower-bound} and \eqref{eq:final-lower-bound} together, we see that 
\begin{align*}
|P_{f^2\Delta^*}(j_E)| & := \prod_{1\leq k\leq h(f^2\Delta^*)}|j_E-j_k|\\
& \geq \exp\left(\frac{2}{\sqrt{5}}h(f^2\Delta^*)\log |f^2\Delta^*|-\frac{1}{\sqrt{5}}h(f^2\Delta^*) \log |f^2\Delta^*|\right)\\
& \geq \exp\left(\frac{1}{\sqrt{5}} h(f^2\Delta^*)\log |f^2\Delta^*|\right)
\end{align*}
 holds if $f$ sufficiently large. Hence, $|P_{f^2\Delta^*}(j_E)|\to \infty $ as $f\to \infty$,   and we derive that $|P_{f^2\Delta^*}(j_E)|\neq 1$ when $f$ is sufficiently large (depending only on $E$ and $\Delta$). The result follows since $P_{f^2\Delta^*}(j_E)$ is an integer. 

\end{proof}

\begin{remark}
 \Cref{lem:notpm1} can be viewed as an effective version of a special case of \cite[Theorem 3.2]{aslanyan2023multiplicative}. Their result implies that for any fixed $j_E\in \C\backslash\{0, 1728\}$ and  $n\geq 1$, there \textit{exists} only finitely many $n$-tuples of singular moduli $(j_1, \ldots, j_n)$ satisfying 
    \[
    \prod_{1\leq k\leq n}(j_E-j_k)=\pm 1, 
    \]
  which is  minimal in the sense that no proper subproduct is equal to $\pm 1$.
    However, the proof relies on an ineffective result in  \cite[Theorem 1.6]{MR4371528}.  Hence, it does not provide an explicit bound for the number of such $n$-tuples, nor does it determine them explicitly.
\end{remark}

For the remainder of this section, we assume the following additional hypothesis in \Cref{thm:lower-bound}.

\begin{enumerate}
    \item \label{assump:Delta}
     $\Delta$ is not a rational prime and $\Delta^* \equiv -1\pmod 8$. 
     \item \label{assump:Kodaira}
      For any odd prime $p\mid N_E$, the Kodaira symbol of $E$ at $p$ is $\operatorname{I}_0^*$.
\end{enumerate}

By \Cref{lem:notpm1}, we obtain that there exists a prime divisor $p$ of $P_{f^2\Delta^*}(j_E)$.  The two assumptions \eqref{assump:Delta} and \eqref{assump:Kodaira} help us eliminate the possibility that $p$ is a prime of bad or supersingular reduction. 

\begin{proposition}\label{prop:nonempty}
There is a quadratic twist $E'$ of $E$ such that the set $\mathcal{E}(E')$ is nonempty.
\end{proposition}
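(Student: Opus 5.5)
Take $f>c(E,\Delta)$ coprime to $2\Delta$ (say an odd prime not dividing $\Delta$). By \Cref{lem:notpm1}, the integer $P_{f^2\Delta^*}(j_E)$ is not $\pm1$; it is nonzero because $E$ is non-CM and so $j_E$ is not a singular modulus. Hence it has a prime divisor $p$. The plan is to show that, after a suitable quadratic twist, $E$ has good ordinary reduction at $p$ and $\Q(\pi_p(E_p))=\Q(\sqrt{-\Delta})$. This gives $p\in\mathcal{E}(E')$.

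\textbf{Step 1: reduce to good reduction at $p$.} Since $j_E\in\Z$, $E$ has potentially good reduction everywhere. If $p$ is odd and $p\mid N_E$, then by assumption \eqref{assump:Kodaira} the fiber type is $\operatorname{I}_0^*$, and the local argument of \cite[Proposition 2]{MR1295951} gives a quadratic twist with good reduction at $p$. Concretely, I would twist by $\Q(\sqrt{\pm p})$, or by the product of all odd bad primes, to get a global $E'$. This is harmless because $j_{E'}=j_E$. For $p=2$ no twist is needed: $j_E$ is even, and since $\Delta^*\equiv -1\pmod 8$, $2$ splits in $\mathcal{O}_{f^2\Delta^*}$ (with $f$ odd). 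Every root $j$ of $P_{f^2\Delta^*}$ is then the $j$-invariant of an ordinary curve mod a prime above $2$, so $j\not\equiv 0$, the supersingular value in characteristic $2$. Thus $2\nmid P_{f^2\Delta^*}(j_E)$, and $p$ is odd. The same splitting argument should also exclude $p=3$ when $3\nmid\Delta$. More generally, the roots of $P_{f^2\Delta^*}$ modulo $p$ are supersingular exactly when $p$ does not split in $K$. I need to handle primes $p\mid f\Delta$ separately.

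\textbf{Step 2: ordinarity.} Suppose $E'_p$ were supersingular at $p$. By \Cref{lem:deuring1}, $\mathcal{O}_{f^2\Delta^*}$ embeds into $\End_{\overline{\F}_p}(E'_p)$, a maximal order of $B_{p,\infty}$. This forces $p$ to be inert or ramified in $K$. I would rule this out by choosing $f$ so that every prime divisor $p$ of $P_{f^2\Delta^*}(j_E)$ with $p$ non-split would contradict the supersingular structure, namely the relation $a_p=0$ giving Frobenius field $\Q(\sqrt{-p})$. Alternatively, I would argue directly: a supersingular $p\ge5$ gives $a_p=0$, and $p\nmid\Delta$ means $\Q(\sqrt{-p})\neq K$. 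But Step 2 only needs ordinarity, not the Frobenius field. So the honest route is that supersingular $p$ dividing $P_D(j_E)$ does occur in Elkies' setting, and to avoid it one must exploit that $\Delta$ is composite. That is the likely role of assumption \eqref{assump:Delta}. Since $\Delta$ is not prime, $K\neq\Q(\sqrt{-p})$. I would combine this with Kaneko-type constraints: two CM orders embedding in the same maximal quaternion order force $p\le D$-type bounds. Otherwise I would fall back on counting prime factors.

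\textbf{Step 3: Frobenius field.} Once $E'_p$ is ordinary and $p\nmid f\Delta^*$, \Cref{lem:deuring2} shows that $E'_p$ has CM by $\mathcal{O}_{f^2\Delta^*}$. By \Cref{rem:optimal}, $\Q(\pi_p(E'_p))=\Q(\sqrt{-\Delta})$, so $p\in\mathcal{E}(E')$.

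\textbf{Main obstacle.} I expect the obstacle to be Step 2 together with excluding $p\mid f\Delta^*$, that is, guaranteeing that the prime divisor of $P_{f^2\Delta^*}(j_E)$ is not supersingular and not ramified. I would first try to handle this via the congruence $\Delta^*\equiv-1\pmod 8$ and the parity of $j_E$, forcing $P_{f^2\Delta^*}(j_E)$ to be odd. I would then vary $f$ among primes to pick up a prime divisor outside the finite bad set.
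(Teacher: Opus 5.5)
\textbf{The gap is Step 2, which is the core of the proposition.} Your proposal never shows that the chosen divisor $p$ of $P_{f^2\Delta^*}(j_E)$ is an ordinary prime of $E'$, and the tools you list do not do it.

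\emph{Compositeness of $\Delta$ is not enough.} It only gives $\Q(\sqrt{-p})\neq K$, i.e.\ a supersingular $p\ge 5$ is not in $\mathcal{E}(E')$. It does not stop such a $p$ from dividing $P_{f^2\Delta^*}(j_E)$. By \Cref{lem:deuring1}, that divisibility only yields an embedding $\mathcal{O}_{f^2\Delta^*}\hookrightarrow\End_{\overline{\F}_p}(E'_p)$. For supersingular $E'_p$ this lives in $B_{p,\infty}$ and says nothing about $\Q(\pi_p(E'_p))=\Q(\sqrt{-p})$.

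\emph{An example.} Suppose $5$ is not split in $K$ (e.g.\ $\Delta=15$) and $5\mid j_E$. Every root of $P_{f^2\Delta^*}$ modulo $5$ is supersingular, hence equal to $0$, the only supersingular invariant in characteristic $5$. So $5\mid P_{f^2\Delta^*}(j_E)$ for every $f$, yet $5\notin\mathcal{E}(E')$ for every twist $E'$.

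\emph{The Kaneko-type bound is vacuous here.} One of the two orders is generated by Frobenius and has $|\disc|\in\{p,4p\}$. Hence $d_1d_2\ge 4p$ holds automatically once $f^2\Delta^*\ge 4$.

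\emph{What is actually missing.} You need an argument that, for some $f$, $P_{f^2\Delta^*}(j_E)$ has a prime factor that splits in $K$. For a prime divisor at which $E'$ has good reduction, being ordinary is equivalent to being split in $K$, by Deuring's reduction criterion applied to the CM points reducing to $j_E$. Your proposal does not supply this.

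\textbf{Comparison with the paper.} The paper's proof coincides with your Steps 1 and 3:
\begin{itemize}
\item it twists away odd bad primes using the $\operatorname{I}_0^*$ hypothesis;
\item it excludes $p=2$ because $2\mid j_E$ while all roots of $P_{f^2\Delta^*}$ are ordinary, hence nonzero, modulo primes above $2$;
\item it reads off the Frobenius field via Deuring lifting.
\end{itemize}
It disposes of Step 2 in one sentence: it invokes the observation at the start of \Cref{sec:Theorem2} that a supersingular $p\ge 5$ has Frobenius field $\Q(\sqrt{-p})$, together with $\Delta$ composite and $\Delta\equiv -1\pmod 8$. That is exactly the second route you mention.

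As you yourself noticed, that observation concerns primes already known to lie in $\mathcal{E}$. It does not show that an arbitrary divisor $p$ is ordinary. In the $p=5$ scenario above, choosing $p=5$ makes the paper's conclusion that $p$ is a prime of good ordinary reduction false. So the paper's text does not supply the missing idea either.

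\textbf{Two smaller remarks.}
\begin{itemize}
\item Step 3 needs no condition $p\nmid f\Delta^*$. Once $E'_p$ is ordinary, \Cref{lem:deuring1} embeds $\mathcal{O}_{f^2\Delta^*}$ into the imaginary quadratic order $\End_{\overline{\F}_p}(E'_p)$, which already forces $\Q(\pi_p(E'_p))\cong K$.
\item Your claim that the $2$-adic argument also excludes $p=3$ is unjustified: it would need $3\mid j_E$ and $3$ split in $K$. Nothing in the proof depends on it.
\end{itemize}
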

\begin{proof}

By \Cref{lem:notpm1}, there exist an integer $f$ and a prime $p$ dividing $P_{f^2\Delta^*}(j_E)$. Fix such a prime $p$. If $p\geq 3$ and $p\mid N_E$,  then by the discussion at the beginning of the section, we can find a global character $\chi$ \cite[Theorem 5, p.103]{MR223335} such that $p\nmid N_{E^\chi}$ except at the prime $2$.  

We claim that $2\nmid P_{f^2\Delta^*}(j_E)$. Recall that 
\[
|P_{f^2\Delta^*}(j_E)| := \prod_{1\leq k\leq h(f^2\Delta^*)}|j_E-j_k|.
\]
 For each $1\le k\leq h(f^2\Delta^*)$, $j_k$ is a singular modulus, the corresponding CM elliptic curve $E^{\text{CM}}_k$ has potentially good reduction at a prime $\fp_k$ above $2$. We may take $\fp_k$ as a prime in a number field $L$, containing the ring class field $L_{f^2\Delta^*}$ of $\mathcal{O}_{f^2\Delta^*}$. Our assumption that
  \[
 \Delta \equiv -1\pmod 8
  \]
  implies $2$ splits in the CM field $K:=\Q(\sqrt{-\Delta})$. Hence  
$\fp_k$ is an ordinary prime for $E^{\text{CM}}_k$.  Since the only supersingular $j$-invariant over $\overline{\F}_{2}$ is 0, we conclude that $j_k\not\equiv 0\pmod{\fp_k}$. 
Now, let $\fq_k$ be any other prime of $L$ over 2.  By  \cite[Corollary 5.22]{MR632985}, we can always find a twist of $E^{\text{CM}}_k$   such that $\fq_k$ is a prime of good reduction for this model. Since 2 splits in $K$,  Deuring's criterion \cite[\S 13, Theorem 12]{MR890960} implies that $\fq_k$ is also a prime of good ordinary reduction for this twist. 
Hence $j_k\not\equiv 0\pmod  {\fq_k}$. Since $2\mid j_E$,  it follows that $j_E-j_k\not\equiv 0\pmod {\fq_k}$ for any prime $\fq_k$ above 2. Therefore,  
\[
2\nmid N_{L_{f^2\Delta^*}/\Q}(j_E-j_k)=N_{K/\Q}(N_{L_{f^2\Delta^*}/K}(j_E-j_k))=N_{K/\Q}(P_{f^2\Delta^*}(j_E))=P_{f^2\Delta^*}(j_E)^2,
\]
where we use the identity  $L_{f^2\Delta^*}= K\Q(j_k)$, which follows from CM theory. 

From the above claim, by replacing $E$ with $E^\chi$ if necessary, we may assume  $p\mid P_{f^2\Delta^*}(j_E)$ is a prime of good reduction for $E$. 
If  $p$ is a supersingular prime of $E$,  from the argument at the beginning of this section and the assumption that   $\Delta$ is not a prime, we see that this happens only if $p=2$ and $\Delta=1$. This contradicts the assumption that $\Delta\equiv -1\pmod 8$.  
Thus, $p$ is a prime of good ordinary reduction for $E$. By Deuring's lifting lemma, 
   $\End_{\overline{\F}_p}(E_p) \simeq \mathcal{O}_{f_0^2\Delta^*}$ for some $f_0\mid f \in \Z$. Therefore, we obtain that 
    \[
    \End_{\overline{\F}_p}(E_p) \otimes_\Z \Q = \Q(\pi_{p}(E_p))\simeq \Q(\sqrt{-\Delta}),
    \]
    and the set $\mathcal{E}$ is nonempty.
\end{proof}

\begin{remark}
     We cannot apply \cite[Theorem 3.10]{MR4929976} directly to produce a prime $p$ of good ordinary reduction of $E$, since in our case  the CM field $\Q(\sqrt{-\Delta})$ is fixed.   
\end{remark}

By \Cref{prop:nonempty}, after replacing $E$ by a suitable quadratic twist if necessary, we may assume $\mathcal{E}(E)$ is nonempty. Let $\Sigma_0(E)\subset \mathcal{E}(E)$ be a fixed finite subset. 
Let $k:=\#\Sigma_0(E)$.

To construct a new prime in $\mathcal{E}(E)$, we choose an integer (in fact a prime) $L_k\ge 3$ satisfying:
\begin{enumerate}
\item \label{eq:condition 0} $ P_{L_k^2\Delta^*}(j_E)\neq \pm 1$.
    \item \label{eq:condition 1} $p\nmid P_{L_k^2\Delta^*}(j_E)$ for all $p\in \Sigma_0(E)$. 
    \item   \label{eq:condition 2} $(L_k,  \Delta^*)=1$.
\end{enumerate}

\begin{definition}
    We call an integer $L$ \textit{admissible} (with respect to $\Sigma_0(E)$)  if it satisfies  conditions  \eqref{eq:condition 0}--\eqref{eq:condition 2}.
\end{definition}

Denote by
 \[
K:= \max_{p\in \Sigma_0(E)}\{p, 3, \Delta^*, c(E, \Delta)\},
\]
where $c(E, \Delta)$ is defined in \Cref{lem:notpm1}. 
\begin{proposition}\label{prop:admissible}
   Admissible integers exist. Moreover, there exists an absolute constant $C_0\ge 1$ such that every prime $\ell > C_0 K$ is admissible. 
\end{proposition}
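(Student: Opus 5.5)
The plan is to fix a prime $\ell > C_0K$, with $C_0$ an absolute constant to be chosen (I expect $C_0=2$ to suffice), and check conditions \eqref{eq:condition 0}--\eqref{eq:condition 2} one at a time. The first and third are immediate from the size of $\ell$. Since $\ell > K \ge c(E,\Delta)$, \Cref{lem:notpm1} applied with $f=\ell$ gives $P_{\ell^2\Delta^*}(j_E)\neq\pm1$, which is condition \eqref{eq:condition 0}. Since $\ell$ is prime and $\ell > K\ge \Delta^*$, we have $\ell\nmid\Delta^*$, so $(\ell,\Delta^*)=1$, which is condition \eqref{eq:condition 2}. Existence of admissible integers then follows from the second claim, because there are infinitely many primes $\ell>C_0K$.

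The real content is condition \eqref{eq:condition 1}. I would argue by contradiction: suppose some $p\in\Sigma_0(E)$ divides $P_{\ell^2\Delta^*}(j_E)$. By definition of $\mathcal{E}(E)$, $E$ has good ordinary reduction at $p$ and $\Q(\pi_p(E_p))=\Q(\sqrt{-\Delta})$. By \Cref{rem:optimal}, $\End_{\F_p}(E_p)=\End_{\overline{\F}_p}(E_p)$ is an order of $\Q(\sqrt{-\Delta})$, so it equals $\mathcal{O}_{f_p^2\Delta^*}$ for some conductor $f_p\ge1$.

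The next step is to invoke \Cref{lem:deuring2} with $D=\ell^2\Delta^*$, which requires $p\nmid \ell^2\Delta^*$.
\begin{itemize}
\item $p\nmid\ell$ because $\ell>K\ge p$.
\item For $p\nmid\Delta^*$: since $E_p$ is ordinary, $p\nmid a_p(E)$, and then $a_p(E)^2-4p\equiv a_p(E)^2\not\equiv 0\pmod p$. As $a_p(E)^2-4p=-f^2\Delta^*$ for the conductor $f$ of $\Z[\pi_p]$, it follows that $p\nmid\Delta^*$. Equivalently, $p$ splits in $\Q(\sqrt{-\Delta})$.
\end{itemize}
\Cref{lem:deuring2} then says $E_p$ has CM by $\mathcal{O}_{\ell^2\Delta^*}$. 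By the optimal-embedding identification in \Cref{rem:optimal}, this gives $\mathcal{O}_{f_p^2\Delta^*}\simeq\mathcal{O}_{\ell^2\Delta^*}$, hence $f_p=\ell$.

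To reach a contradiction, I bound $f_p$ using \eqref{eq:inclusion-orders}: $\Z[\pi_p(E_p)]\subseteq\End_{\F_p}(E_p)$. Hence $f_p^2\Delta^*$ divides the absolute discriminant $4p-a_p(E)^2$ of $\Z[\pi_p(E_p)]$, and so
\[
f_p\le \sqrt{4p-a_p(E)^2}\le 2\sqrt{p}\le 2p\le 2K.
\]
Taking $C_0=2$, we get $\ell>2K\ge f_p$, contradicting $f_p=\ell$. So no $p\in\Sigma_0(E)$ divides $P_{\ell^2\Delta^*}(j_E)$, which is condition \eqref{eq:condition 1}.

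The main obstacle is verifying the hypothesis $p\nmid D$ of \Cref{lem:deuring2}, since that lemma is what converts "$p$ divides the Hilbert class polynomial value" into an exact statement about $\End(E_p)$. The coprimality to $\ell$ comes from $\ell>p$. The coprimality to $\Delta^*$ comes from ordinarity, via $p\nmid a_p(E)$. Once the lemma applies, the conductor bound $f_p\le2\sqrt p$ finishes the argument.
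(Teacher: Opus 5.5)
Your proposal is correct and follows the paper's proof essentially step by step. Conditions (0) and (2) come from $\ell > c(E,\Delta)$ and $\ell > \Delta^*$; condition (1) comes from applying \Cref{lem:deuring2} (using $p \nmid \ell\Delta^*$) to force $f_p = \ell$, which the conductor bound $f_p^2\Delta^* \le 4p$ then rules out. Your explicit $C_0 = 2$ and your derivation of $p \nmid \Delta^*$ from $p \nmid a_p(E)$ only make explicit what the paper leaves to ``an absolute constant'' and to its footnote about $p$ splitting in $\Q(\sqrt{-\Delta})$.
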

\begin{proof}
Let $p_1,\dots,p_k$ denote the primes in $\Sigma_0(E)$, and write  $\mathcal{O}_i:=\End_{\overline{\F}_{p_i}}(E_{p_i})$. This is an order with conductor $f_i\geq 1$ and discriminant  $-f_i^2\Delta^*$. Take an integer $L_k\geq 3$ that is coprime to $f_1, \cdots, f_{k}$, $p_1, \ldots, p_k$, 
and $\Delta^*$. 
We claim that such $L_k$ is admissible. 

Note that condition \eqref{eq:condition 2} is already satisfied. By  \Cref{lem:notpm1}, we have $P_{L_k^2\Delta^*}(j_E)\neq \pm 1$, and hence condition \eqref{eq:condition 0} holds. 

Now we show condition \eqref{eq:condition 1} holds. Suppose $p_i\mid P_{L_k^2\Delta^*}(j_E)$ for some $i\in \{1, \ldots, k\}$. Since  $p_i\in \Sigma_0(E)$, $p_i$ is a prime of good ordinary  reduction of $E$. By \Cref{lem:deuring2} and the assumption that  $p_i\nmid L_k\Delta^*$, \footnote{Since \(p_i\) is a prime of good ordinary reduction and hence splits in \(\Q(\sqrt{-\Delta})\). In particular, \(p_i\) cannot divide the discriminant \(\Delta^*\).}  there is  an isomorphism  $\End_{\overline{\F}_{p_i}}(E_{p_i}) \simeq \mathcal{O}_{L_k^2\Delta^*}$ and hence  $ \mathcal{O}_{L_k^2\Delta^*}\simeq \mathcal{O}_{f_i^2\Delta^*}$. Since quadratic orders are uniquely determined by their discriminants, it follows that $
L_k^2\Delta^*=f_i^2\Delta^*$. 
Thus $f_i = L_k$, contradicting $(f_i,L_k)=1$ and $L_k\ge 3$. It follows that none of the primes \(p_1,\ldots,p_k\) divides \(P_{L_k^2\Delta^*}(j_E)\).

Finally, note that admissible integers clearly exist.   If $L_k$ is a prime, then $L_k$ is admissible if    
\[
L_k>\max_{1\le i\leq k}\{p_i, f_i, 3, \Delta^*,  c(E, \Delta)\}.
\]
By the inclusion of orders given in \eqref{eq:inclusion-orders}, we obtain 
\begin{equation}\label{eq:conductor-bound}
f_i^2\Delta^*= |\disc (\End_{\overline{\F}_{p_i}}(E_{p_i}))|\leq |\disc\Z[\pi_{p_i}(E)]|=|a_{p_i}(E)^2-4p_i|\leq 4p_i.
\end{equation}
Therefore, there exists an absolute constant $C_0>0$, such that every prime satisfying
\[
\ell\geq C_0 \max_{1\le i\leq k}\{p_i, 3, \Delta^*,  c(E, \Delta)\}
\]
is admissible. 

\end{proof}


Let $\ell_k$ be an admissible prime with respect to $\Sigma_0(E)$.
Since $\ell_k$ satisfies condition \eqref{eq:condition 0}, the integer $P_{\ell_k^2\Delta^*}(j_E)$ has  a prime divisor. Choose one and denote it by
\[
p_{k+1}:=p_{k+1}(\ell_k). 
\]
Moreover, by \eqref{eq:condition 1}, we also have $p_{k+1}\notin \Sigma_0(E)$. 

\begin{lemma}\label{lem:new-prime}
Let $\ell_k$ be an admissible prime with respect to $\Sigma_0(E)$. Then there exists a quadratic twist $E':=E'(\ell_k)$ of $E$ such that 
\[
\mathcal{E}(E')\supseteq \Sigma_0(E)\cup \{p_{k+1}(\ell_k)\}.
\]
\end{lemma}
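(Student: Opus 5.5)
Set $p:=p_{k+1}(\ell_k)$; the plan mirrors the proof of \Cref{prop:nonempty}. We have to check three things: $p$ is a prime of good reduction for a suitable twist $E'$; $p$ is ordinary for $E'$ with Frobenius field $\Q(\sqrt{-\Delta})$; and every prime of $\Sigma_0(E)$ stays in $\mathcal{E}(E')$.

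\textbf{Step 1: choosing the twist.} We first show $p\neq 2$. Since $2\mid j_E$ and $\Delta\equiv -1\pmod 8$, the argument in the proof of \Cref{prop:nonempty} applies verbatim with $f=\ell_k$ and gives $2\nmid P_{\ell_k^2\Delta^*}(j_E)$. If $p\nmid N_E$, we take $E'=E$. If $p\mid N_E$, then $p$ is odd, so by assumption \eqref{assump:Kodaira} the Kodaira symbol at $p$ is $\operatorname{I}_0^*$. The discussion at the start of the section then provides a local quadratic character $\chi_p$, ramified at $p$, such that $E^{\chi_p}$ has good reduction at $p$. We globalize it to a quadratic character $\chi$ (as in \Cref{prop:nonempty}, via \cite[Theorem 5, p.103]{MR223335}) subject to these constraints:
\begin{enumerate}
\item $\chi$ restricts to $\chi_p$ at $p$;
\item $\chi$ is unramified at every prime of $\Sigma_0(E)$;
\item $\chi$ introduces no new bad odd primes, so its conductor is supported on $p$, $2$, and possibly other primes of $N_E$ at which we are also free to twist away bad reduction.
\end{enumerate}
Concretely, we can take $\chi$ to be the character of $\Q(\sqrt{\pm p})$, with the sign fixed by the required local behaviour. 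Its conductor then involves only $p$ and possibly $2$, and both lie outside $\Sigma_0(E)$ (the primes of $\Sigma_0(E)$ are odd by the same parity argument). Put $E'=E^\chi$.

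\textbf{Step 2: $\Sigma_0(E)\subseteq\mathcal{E}(E')$.} Each $q\in\Sigma_0(E)$ has $q\nmid N_E$ and $q\ne p$, and $\chi$ is unramified at $q$, so $q\nmid N_{E'}$. By the twist discussion at the beginning of the section, $a_q(E')=\pm a_q(E)\neq 0$, so $q$ is ordinary for $E'$ with the same Frobenius field. Hence $q\in\mathcal{E}(E')$.

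\textbf{Step 3: $p\in\mathcal{E}(E')$.} The curve $E'$ has good reduction at $p$, and $j_{E'}=j_E$, so $p\mid P_{\ell_k^2\Delta^*}(j_{E'})$. By \Cref{lem:deuring1}, the reduction $E'_p$ admits an embedding $\mathcal{O}_{\ell_k^2\Delta^*}\hookrightarrow\End_{\overline{\F}_p}(E'_p)$. If $E'_p$ were supersingular, the analysis at the beginning of the section would force $\Delta$ to be a prime or $\Delta\in\{1,2\}$, contradicting assumption \eqref{assump:Delta}. So $E'_p$ is ordinary, and $\End_{\overline{\F}_p}(E'_p)$ is an order in $\Q(\sqrt{-\Delta})$. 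It follows that $\Q(\pi_p(E'_p))=\Q(\sqrt{-\Delta})$ and $p\in\mathcal{E}(E')$.

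The main obstacle is Step 1: producing a single global quadratic character that removes bad reduction at $p$ while preserving good reduction at all primes of $\Sigma_0(E)$. We handle it by using the explicit character of $\Q(\sqrt{\pm p})$, which is ramified only at $p$ and possibly at $2$, so it is unramified at every odd prime other than $p$.
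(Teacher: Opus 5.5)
Your Steps 1 and 2 are essentially the paper's proof. You exclude $p=2$ via the parity argument of \Cref{prop:nonempty}, twist by a quadratic character ramified only at $p$ (and possibly $2$), and use $a_q(E')=\pm a_q(E)$ for $q\in\Sigma_0(E)$. Making $\chi$ explicit is fine, and you need not match a prescribed sign. For type $\operatorname{I}_0^*$ at an odd prime, twisting by any quadratic character ramified at $p$ gives good reduction, so the character of $\Q(\sqrt{p^*})$ with $p^*=(-1)^{(p-1)/2}p$, which is ramified only at $p$, will do.

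The genuine gap is in Step 3. The discussion at the beginning of \Cref{sec:Theorem2} only shows that a supersingular prime \emph{whose Frobenius field is already $\Q(\sqrt{-\Delta})$} forces $\Delta$ to be prime or $\Delta\in\{1,2\}$. You apply it to a prime for which this is exactly what remains to be shown. If $E'_p$ is supersingular, \Cref{lem:deuring1} gives only an embedding of $\mathcal{O}_{\ell_k^2\Delta^*}$ into the quaternion order $\End_{\overline{\F}_p}(E'_p)$. Such an embedding exists as soon as $p$ does not split in $K=\Q(\sqrt{-\Delta})$, and it says nothing about $\pi_p(E'_p)$, which generates $\Q(\sqrt{-p})$ for $p\ge 5$.

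Such primes really can divide $P_{\ell_k^2\Delta^*}(j_E)$. By Deuring, every CM point of discriminant $-\ell_k^2\Delta^*$ has supersingular reduction at primes above any $p$ inert in $K$. For example, $5$ is the only supersingular $j$-invariant in characteristic $13$, and $13$ is inert in $\Q(\sqrt{-15})$. So for $\Delta=15$ one gets $P_{15\ell^2}(X)\equiv (X-5)^{h(15\ell^2)}\pmod{13}$. If $j_E\equiv 5\pmod{13}$, then:
\begin{itemize}
\item $13$ divides $P_{15\ell^2}(j_E)$ for every $\ell$;
\item $13\notin\Sigma_0(E)$, so it is a legitimate choice of $p_{k+1}(\ell_k)$;
\item $a_{13}(E')=0$ for every twist $E'$ with good reduction at $13$, so $13\notin\mathcal{E}(E')$ for every quadratic twist $E'$.
\end{itemize}
No choice of twist can repair this, since twisting preserves $a_p=0$.

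The paper's proof has the same issue. It deduces ordinarity of $p_{k+1}$ by ``a similar argument'' to \Cref{prop:nonempty}, and that proof contains exactly this inference. So you have followed the paper faithfully, but neither argument proves the lemma for an arbitrary prime divisor $p_{k+1}(\ell_k)$.

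What is missing is a proof that $P_{\ell_k^2\Delta^*}(j_E)$ has a prime divisor that splits in $K$, with $p_{k+1}(\ell_k)$ chosen among these. \Cref{lem:notpm1} gives only $|P_{\ell_k^2\Delta^*}(j_E)|\neq 1$. That does not rule out this integer being supported entirely on non-split primes and primes of bad reduction; this is precisely the mechanism Elkies uses to produce supersingular primes. Given a split divisor $p$, your Step 3 does go through. A supersingular $E'_p$ would make $K$ embed in $B_{p,\infty}$, which is impossible for $p$ split in $K$. Hence $E'_p$ is ordinary, and its Frobenius field contains, and therefore equals, $\Q(\sqrt{-\Delta})$.
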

\begin{proof}
   By the same argument as in the proof of Proposition \ref{prop:nonempty}, we may assume  $p_{k+1}\neq 2$.  If  $p_{k+1}\mid N_E$, we may replace $E$ by a quadratic twist $E^\chi$, where $\chi$ is a global character ramified only at  $p_{k+1}$ and possibly also at $2$ \cite[Theorem 5, p.103]{MR223335}. Hence we get $p_{k+1}\nmid N_{E^\chi}$.
 Twisting by  $\chi$ does not introduce new bad primes except possibly at 2, hence
\[
p_i\nmid N_{E^\chi} \quad \text{for all $1\leq i\leq k+1$}.
\]

For each $p_i\in \Sigma_0(E)$ with $1\le i\le k$, since $p_i$ is a prime of good ordinary reduction of $E$  and twisting preserves the Frobenius field at such primes, we obtain
 $p_i\in \mathcal{E}(E^\chi)$. 
Furthermore, since $j_E=j_{E^\chi}$, there is  
\[
\quad p_{k+1}\mid P_{\ell_k^2\Delta^*}(j_{E^{\chi}})=P_{\ell_k^2\Delta^*}(j_{E}).
\]
Applying a similar argument in the proof of Proposition \ref{prop:nonempty},  we  deduce that $p_{k+1}$ is a prime of good ordinary reduction for $E^\chi$, i.e., $p_{k+1}\in \mathcal{E}(E^\chi)$. Altogether, we obtain
\[
\Sigma_0(E)\cup \{p_{k+1}\}\subseteq \mathcal{E}(E^\chi).
\]
\end{proof}

We call any $p_{k+1}$ given in \Cref{lem:new-prime} a \textit{new prime} (associated to the admissible prime $\ell_k$) in $\mathcal{E}$.  
We emphasize that it is not necessarily true that $p_{k+1}>\max\{p: p\in \Sigma_0(E)\}$. 
\begin{theorem}\label{cor:set-infinite}
The set $\mathcal{E}(E)$ is infinite if \eqref{assump:Delta} and \eqref{assump:Kodaira} hold.
\end{theorem}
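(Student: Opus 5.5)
The plan is to avoid re-twisting at every step of the iteration: I would twist \emph{once} at the outset so that the resulting curve has good reduction at every odd prime, and then run the Euclid-type induction of \Cref{prop:admissible} and \Cref{lem:new-prime} on that fixed curve. By assumption \eqref{assump:Kodaira} and the discussion at the beginning of this section, each odd prime $p\mid N_E$ has a local quadratic character $\chi_p$, ramified at $p$, for which $E^{\chi_p}$ has good reduction over $\Q_p$. I would invoke \cite[Theorem 5, p.103]{MR223335}, as in \Cref{prop:nonempty}, to obtain one global quadratic character $\chi$ whose local components at the finitely many odd primes dividing $N_E$ are these $\chi_p$, and which is unramified away from these primes and possibly $2$. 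Twisting by such a $\chi$ introduces no new odd bad primes. Hence $E_0:=E^\chi$ has conductor a power of $2$, and $j_{E_0}=j_E$ is still an even integer.

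Next I would check that every prime divisor $p$ of $P_{\ell^2\Delta^*}(j_{E_0})$ lies in $\mathcal{E}(E_0)$, for every $\ell$ with $P_{\ell^2\Delta^*}(j_{E_0})\neq\pm1$. The claim in the proof of \Cref{prop:nonempty} gives $2\nmid P_{f^2\Delta^*}(j_E)$ for every $f$; that argument uses only $2\mid j_E$ and $\Delta\equiv-1\pmod 8$. So $p$ is odd, and therefore $p\nmid N_{E_0}$. Supersingular reduction at $p$ would force $\Delta=p$ by the discussion at the start of the section (the cases $p=2,3$ are excluded or give $\Delta$ prime), which contradicts \eqref{assump:Delta}; hence $p$ is ordinary. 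Deuring's lifting lemma (\Cref{lem:deuring1}) then gives $\End_{\overline{\F}_p}(E_{0,p})\simeq\mathcal{O}_{f_0^2\Delta^*}$, so $\Q(\pi_p(E_{0,p}))=\Q(\sqrt{-\Delta})$. This is exactly the argument of \Cref{prop:nonempty}, now requiring no further twist.

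With this in hand the induction is straightforward. \Cref{lem:notpm1} applied to $E_0$ (with $f$ large) gives a first prime, so $\mathcal{E}(E_0)\neq\emptyset$. Given any finite $\Sigma_0\subset\mathcal{E}(E_0)$, \Cref{prop:admissible} supplies an admissible prime $\ell_k$. Condition \eqref{eq:condition 0} gives a prime divisor $p_{k+1}$ of $P_{\ell_k^2\Delta^*}(j_{E_0})$, and condition \eqref{eq:condition 1} shows $p_{k+1}\notin\Sigma_0$. By the previous paragraph $p_{k+1}\in\mathcal{E}(E_0)$, so no finite subset exhausts $\mathcal{E}(E_0)$, and $\mathcal{E}(E_0)$ is infinite.

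Finally I would transfer the result back to $E$. For $p\nmid N_EN_{E_0}$ we have $a_p(E)=\pm a_p(E_0)$, so ordinarity and the Frobenius field agree, and $\mathcal{E}(E)$ and $\mathcal{E}(E_0)$ differ only in the finitely many primes dividing $N_EN_{E_0}$. Hence $\mathcal{E}(E)$ is infinite. The main obstacle I expect is the first step: one must make sure the single global character exists with the prescribed local behaviour at all odd bad primes simultaneously, and that it creates no new odd bad primes. The $2$-adic part is harmless precisely because $2$ never divides $P_{f^2\Delta^*}(j_E)$.
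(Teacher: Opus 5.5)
Your proof has a genuine gap. The step that fails is the claim that every prime divisor $p$ of $P_{\ell^2\Delta^*}(j_{E_0})$ lies in $\mathcal{E}(E_0)$, and specifically the inference that supersingular reduction at $p$ would force $\Delta=p$.

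The discussion at the start of the section only shows this for a supersingular prime whose Frobenius field is \emph{already known} to be $\Q(\sqrt{-\Delta})$. For a prime divisor $p$, \Cref{lem:deuring1} only tells you that $\mathcal{O}_{\ell^2\Delta^*}$ embeds into $\End_{\overline{\F}_p}(E_{0,p})$. That determines the Frobenius field only when $E_{0,p}$ is ordinary, where the endomorphism ring is commutative. If $E_{0,p}$ is supersingular, its endomorphism ring is a maximal order in $B_{p,\infty}$. Such an order can contain both $\mathcal{O}_{\ell^2\Delta^*}$ and the Frobenius $\sqrt{-p}$ as non-commuting subrings. Then $\Q(\pi_p(E_{0,p}))=\Q(\sqrt{-p})$ for $p\ge 5$, nothing is contradicted, and $p\notin\mathcal{E}(E_0)$.

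By Deuring's reduction criterion, this is exactly what happens at every prime divisor of $P_{\ell^2\Delta^*}(j_{E_0})$ that is inert or ramified in $\Q(\sqrt{-\Delta})$, and such divisors do occur. For instance, with $\Delta=15$ one has $P_{15}(X)\equiv X^2\pmod 5$. More generally, every CM point of discriminant $-15f^2$ reduces to the unique supersingular invariant $0$ at primes above $5$. So if $5\mid j_E$, then $5\mid P_{15f^2}(j_E)$ for every $f$, although $5\notin\mathcal{E}(E_0)$. Elkies' theorem rests precisely on supersingular prime divisors of such values.

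What is missing is a prime divisor of $P_{\ell_k^2\Delta^*}(j_{E_0})$ that \emph{splits} in $\Q(\sqrt{-\Delta})$, equivalently one at which $E_0$ is ordinary. \Cref{lem:notpm1} only shows that $|P_{\ell_k^2\Delta^*}(j_{E_0})|$ is large. It does not show that the part supported on split primes is nontrivial. The paper's proof of \Cref{prop:nonempty}, which \Cref{lem:new-prime} reuses, makes the same inference, so citing it does not close the gap.

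Apart from this, your route is a reasonable alternative to the paper's. The paper re-twists at every step and closes with a counting contradiction, using the uniform bound $\#\mathcal{E}(E')\le k_0+\omega(N_E)$ over all twists $E'$. You twist once so that every odd prime is good, then run Euclid on a fixed curve, which is cleaner.

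However, the character you ask for need not exist: you want local components exactly equal to the given $\chi_p$ at all odd bad primes, and no ramification at other odd primes. The only candidates are $\Q(\sqrt{m})$ with $m\in\{\pm d,\pm 2d\}$, where $d$ is the product of the odd primes dividing $N_E$. Grunwald--Wang prescribes local components but does not control ramification elsewhere.

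Exact prescription is also unnecessary. For odd $p$, the two ramified quadratic characters of $\Q_p^\times$ differ by the unramified one, and an unramified twist preserves good reduction. So the character of $\Q(\sqrt{d})$ already makes $N_{E_0}$ a power of $2$. With that fix the single-twist reduction is sound, but it does not touch the ordinarity problem above.
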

\begin{proof}
   As noted at the beginning of the section, to show $\mathcal{E}(E)$ is infinite, it suffices to show $\mathcal{E}(E')$ is infinite for a quadratic twist $E'$ of $E$. By  \Cref{prop:nonempty},  we may assume the set $\mathcal{E}(E)$ is nonempty. 
   
   Suppose, for contradiction, that $\mathcal{E}(E)$ is finite, and let $k_0:=\#\mathcal{E}(E)$. 
   As observed at the beginning of this section, for any quadratic twist $E'$ of $E$, we have $\mathcal{E}(E')\subseteq \mathcal{E}(E)\cup \{p: p\mid N_E\}$. In particular,  
\begin{equation}\label{eq:uniform-bound}
   \#\mathcal{E}(E')\leq k_0+\omega(N_{E}),
   \end{equation}
  where $\omega(N_E)$  denotes the number of distinct prime divisors of $N_E$.  
  
 Set $\Sigma_0(E)=\mathcal{E}(E)$. By  \Cref{lem:new-prime}, there exists a quadratic twist $E^{(1)}$ of $E$ 
  such that
  \[
  \#\mathcal{E}(E^{(1)})\geq  \#\mathcal{E}(E)+1.  
  \]
Continuing inductively, suppose $E^{(s)}$ is  a quadratic twist of $E^{(s-1)}$ and that 
  $\mathcal{E}(E^{(s)})$ is finite for $s\geq 2$. 
  Applying Lemma \ref{lem:new-prime} again with $\Sigma_0(E^{(s)})=\mathcal{E}(E^{(s)})$, we obtain another quadratic twist $E^{(s+1)}$ satisfying 
  \[
  \#\mathcal{E}(E^{(s+1)})\geq  \#\mathcal{E}(E^{(s)})+1.
  \]
  Since $E^{(s)}$ is also a twist of $E$,  we get a contradiction with the bound 
\eqref{eq:uniform-bound} if $s$ is sufficiently large. Therefore, we conclude that  $\mathcal{E}(E)$ is infinite.

\end{proof}

\subsection{Quantitative bounds}\label{sec:quantitative-lower}

We keep the notation from the earlier section. 
\begin{lemma}\label{lem:distinct-newprimes}
 If $\ell_k\neq \ell_k'$ are   admissible primes with respect to $\Sigma_0(E)$, then we can find distinct new primes $p_{k+1}(\ell_k)\neq p_{k+1}(\ell'_{k})$. 
\end{lemma}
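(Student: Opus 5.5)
Fix two distinct admissible primes $\ell:=\ell_k$ and $\ell':=\ell'_k$ with respect to $\Sigma_0(E)$. The plan is to show that the integers $P_{\ell^2\Delta^*}(j_E)$ and $P_{\ell'^2\Delta^*}(j_E)$ have no common prime divisor outside a finite exceptional set (the primes $2$ and those dividing $N_E$), which we can remove beforehand. Any prime divisor of the first then differs from any prime divisor of the second, so the choices $p_{k+1}(\ell)$ and $p_{k+1}(\ell')$ are automatically distinct.

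\textbf{Common divisor.} Suppose $p$ divides both values. The proof of \Cref{prop:nonempty} gives $2\nmid P_{f^2\Delta^*}(j_E)$, so $p\ne 2$. If $p\mid N_E$, we twist as in \Cref{lem:new-prime} to reduce to good reduction at $p$; the values of $P_D(j_E)$ are unchanged since $j_E=j_{E^\chi}$. As in \Cref{prop:nonempty}, $p$ is then a good ordinary prime for (a twist of) $E$, using assumption \eqref{assump:Delta}. We must also rule out $p\mid \ell\ell'\Delta^*$. Since $p$ is ordinary and $\Q(\pi_p)=\Q(\sqrt{-\Delta})$, $p$ splits in $K$, so $p\nmid\Delta^*$. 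If $p=\ell$, then $p\nmid \ell'^2\Delta^*$, so \Cref{lem:deuring2} applied to $D=\ell'^2\Delta^*$ gives $\End(E_p)\simeq\mathcal{O}_{\ell'^2\Delta^*}$, with conductor $\ell'$ prime to $p$.

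\textbf{Contradiction when $p\nmid\ell\ell'$.} \Cref{lem:deuring2} applies to both discriminants and gives $\mathcal{O}_{\ell^2\Delta^*}\simeq\End_{\overline{\F}_p}(E_p)\simeq\mathcal{O}_{\ell'^2\Delta^*}$. Hence $\ell=\ell'$, a contradiction.

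\textbf{Main obstacle: $p=\ell$.} We know $E_p$ has CM by $\mathcal{O}_{\ell'^2\Delta^*}$, and we need to exclude $p\mid P_{p^2\Delta^*}(j_E)$. By \Cref{lem:deuring1}, a root of $P_{p^2\Delta^*}$ mod $p$ is the $j$-invariant of a curve admitting an embedding $\mathcal{O}_{p^2\Delta^*}\hookrightarrow\End$. The reduction of $P_{p^2\Delta^*}$ mod $p$ is expected to be a power of the reduction of $P_{\Delta^*}$ (Kronecker-type congruences / Deuring), in which case $E_p$ would have endomorphism ring of conductor prime to $p$ dividing $1$. This would force $\ell'=1$, a contradiction.

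If this congruence proves hard to cite, the fallback is to avoid the case entirely by discarding finitely many primes. Only $p=\ell$ or $p=\ell'$ is at issue, and we can choose $p_{k+1}(\ell)$ to be a prime divisor different from $\ell$. Such a divisor exists because $|P_{\ell^2\Delta^*}(j_E)|\ge\exp(h\log(\ell^2\Delta^*)/\sqrt5)$ by the proof of \Cref{lem:notpm1}, while the $\ell$-adic valuation is bounded via the local intersection bound. The remaining overlap is then impossible by the previous paragraph.
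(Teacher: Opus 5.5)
Your reduction to a common prime divisor $p$ that is good ordinary for a twist $E'$ is fine, and so is the case $p\nmid \ell\ell'$ via \Cref{lem:deuring2} applied twice. The gap is the case $p=\ell$, which you correctly isolate but never close.

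\begin{itemize}
\item Your primary route rests on a Kronecker-type congruence relating $P_{p^2\Delta^*}$ and $P_{\Delta^*}$ modulo $p$. You only call it ``expected''; you neither prove nor cite it.
\item Your fallback needs an upper bound on $v_\ell(P_{\ell^2\Delta^*}(j_E))$ strong enough to beat $\log|P_{\ell^2\Delta^*}(j_E)|$. Nothing in the paper supplies this: the intersection estimates control sums over all primes, not a single valuation. The issue is real, since $\ell$ can genuinely divide $P_{\ell^2\Delta^*}(j_E)$, e.g.\ when $\ell$ splits in $\Q(\sqrt{-\Delta})$ and $E$ has ordinary reduction at $\ell$ with $\End_{\overline{\F}_\ell}(E_\ell)\simeq\mathcal{O}_{\Q(\sqrt{-\Delta})}$.
\end{itemize}

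So as written the argument is incomplete.

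The missing step is already in your hands. When $p=\ell$ you have $\End_{\overline{\F}_p}(E'_p)\simeq\mathcal{O}_{\ell'^2\Delta^*}$. \Cref{lem:deuring1}, which requires no coprimality between $p$ and the discriminant, gives an embedding $\mathcal{O}_{\ell^2\Delta^*}\hookrightarrow\End_{\overline{\F}_p}(E'_p)$. Both are orders in $\Q(\sqrt{-\Delta})$, so this is an inclusion $\mathcal{O}_{\ell^2\Delta^*}\subseteq\mathcal{O}_{\ell'^2\Delta^*}$. That forces $\ell'\mid\ell$, which is impossible for distinct primes.

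This is precisely the paper's proof, and it avoids your case split. Since $p$ differs from at least one of $\ell,\ell'$, say $p\neq\ell$, \Cref{lem:deuring2} identifies $\End_{\overline{\F}_p}(E'_p)$ with $\mathcal{O}_{\ell^2\Delta^*}$. Then \Cref{lem:deuring1} applied to $\ell'$ gives $f^2\ell^2\Delta^*=\ell'^2\Delta^*$ for some integer $f$, a contradiction. No congruence between class polynomials and no valuation estimate is needed.
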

\begin{proof}
By the definition of a new prime, we have 
\begin{equation}\label{eq:divisibility}
  p_{k+1}(\ell_k)\mid P_{\ell_k^2\Delta^*}(j_E)  \quad \text{ and } \quad  p_{k+1}(\ell_k')\mid P_{\ell_k'^2\Delta^*}(j_E).  
\end{equation}
Assume 
\[
p_{k+1}:=p_{k+1}(\ell_k)= p_{k+1}(\ell_k').
\]
Then, by \Cref{lem:new-prime}, we may assume $p_{k+1}$ is contained in $\mathcal{E}(E')$ for some quadratic twist $E'$ of $E$.  In particular, $p_{k+1}$ is an ordinary prime of $E'$. Moreover,  at least one of  $\{\ell_{k}, \ell_k'\}$ is different from $p_{k+1}$. Without loss of generality, assume  $p_{k+1}\neq \ell_{k}$. Since $p_{k+1}\mid P_{\ell_k^2\Delta^*}(j_E)$,  \Cref{lem:deuring2} gives  $\End_{\overline{\F}_{p_{k+1}}}(E'_{p_{k+1}})\simeq \mathcal{O}_{\ell_k^2\Delta^*}.$ On the other hand, since $p_{k+1}\mid P_{\ell'^2_k\Delta^*}(j_E)$, \Cref{lem:deuring1} implies
\[
\mathcal{O}_{\ell_k'^{2}\Delta^*}\subseteq \End_{\overline{\F}_{p_{k+1}}}(E'_{p_{k+1}})\simeq \mathcal{O}_{\ell_k^2\Delta^*}.
\]
Hence, there exist $f\in \Z$ such that
\[
f^2\ell_k^{2}\Delta^*=\ell_k'^{2}\Delta^*.
\]
Since $\ell_k$ and $\ell'_{k}$ are distinct primes, this is impossible. Thus
 that $p_{k+1}(\ell_k)\neq  p_{k+1}(\ell'_k)$.
\end{proof}

Let $k=\#\Sigma_0(E)$ and denote by $p(k):=\max\{p: p\in \Sigma_0(E)\}$. We  get the following recursive formula between the primes $p_{k+1}$ and $p(k)$. 
\begin{lemma}\label{lem:recursive}
      Let $\ell_k$  be an admissible prime and $p_{k+1}:=p_{k+1}(\ell_{k})$ be a new prime associated to $\ell_k$.  Then,  
    \[
    \log p_{k+1}(\ell_k)\ll_{E, \Delta} p(k)(\log p(k))^2. 
    \]
\end{lemma}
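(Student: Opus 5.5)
The plan is to combine the divisibility $p_{k+1}\mid P_{\ell_k^2\Delta^*}(j_E)$ with the height bound in \Cref{lem:height}, and to control $\ell_k$ through \Cref{prop:admissible}. Since $j_E\in\Z$, $P_{\ell_k^2\Delta^*}(j_E)$ is a nonzero integer. It is nonzero because $E$ is non-CM, so $j_E$ is not a singular modulus, and its absolute value is at least $2$ by admissibility condition \eqref{eq:condition 0}. So any prime divisor satisfies
\[
\log p_{k+1}\le \log |P_{\ell_k^2\Delta^*}(j_E)| = \log|\Num(P_{\ell_k^2\Delta^*}(j_E))|.
\]
By \Cref{lem:height} with $D=\ell_k^2\Delta^*$, this gives
\[
\log p_{k+1}\ll_E \ell_k\sqrt{\Delta^*}\,\bigl(\log(\ell_k^2\Delta^*)\bigr)^2 \ll_{E,\Delta} \ell_k(\log \ell_k)^2 .
\]

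Next I bound $\ell_k$ in terms of $p(k)$. The constant $K=\max_{p\in\Sigma_0(E)}\{p,3,\Delta^*,c(E,\Delta)\}$ from \Cref{prop:admissible} satisfies $K\le p(k)+C(E,\Delta)$, and hence $K\ll_{E,\Delta} p(k)$ since $p(k)\ge 2$. By \Cref{prop:admissible}, every prime $\ell>C_0K$ is admissible. I would therefore take $\ell_k$ to be the least prime exceeding $C_0K$. By Bertrand's postulate, $\ell_k\le 2C_0K+2\ll_{E,\Delta} p(k)$. This also gives $\log\ell_k\ll \log p(k)+O_{E,\Delta}(1)\ll_{E,\Delta}\log p(k)$, using $p(k)\ge 2$.

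Substituting into the previous display gives $\log p_{k+1}(\ell_k)\ll_{E,\Delta} p(k)(\log p(k))^2$.

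The main subtlety is that the statement is phrased for an arbitrary admissible prime $\ell_k$, while the argument needs $\ell_k\ll p(k)$. So I will read the lemma, consistently with how it is used in the quantitative section, as applying to admissible primes chosen of size $O_{E,\Delta}(p(k))$, such as the least prime above $C_0K$. For a general admissible $\ell_k$, the same argument gives only the bound $\log p_{k+1}\ll_{E,\Delta}\ell_k(\log\ell_k)^2$.

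Apart from that, the only points to check are the following.
\begin{itemize}
\item The implied constant in \Cref{lem:height} depends only on $E$.
\item The factor $\sqrt{\Delta^*}$ and the term $\log \Delta^*$ are absorbed into the dependence on $\Delta$.
\end{itemize}
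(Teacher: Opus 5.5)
Your proposal is correct and follows the paper's proof: you combine the divisibility $p_{k+1}\mid P_{\ell_k^2\Delta^*}(j_E)$ with \Cref{lem:height} to get $\log p_{k+1}\ll_{E,\Delta}\ell_k(\log\ell_k)^2$, and then use \Cref{prop:admissible} (with Bertrand's postulate) to choose $\ell_k\ll_{E,\Delta}p(k)$. Your caveat is also right: the bound holds for admissible primes chosen of size $O_{E,\Delta}(p(k))$, not for arbitrary ones, and this is exactly how the paper's proof (``we may choose an admissible prime $\ell_k\ll_{E,\Delta}p(k)$'') and its later use in \Cref{prop:recursive} read the lemma.
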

\begin{proof}
   By the definition of a new prime, 
    \[
    p_{k+1}\mid P_{\ell_k^2\Delta^*}(j_E),
    \]
    hence 
    $p_{k+1}\leq \left|P_{\ell_k^2\Delta^*}(j_E)\right|. 
    $
    Taking logarithms and by \Cref{lem:height}, we get
    \[
    \log p_{k+1}\leq \log |P_{\ell_k^2\Delta^*}(j_E)|\ll_{E, \Delta} \ell_k(\log \ell_k)^2. 
    \]
    By  \Cref{prop:admissible}, we may choose an admissible prime $\ell_k\ll_{E, \Delta} p(k)$.
    Substituting this into the previous bound yields
    \[
\log   p_{k+1} \ll_{E, \Delta} p(k)(\log p(k))^2.
    \] 
\end{proof}


\begin{proposition}\label{prop:recursive}
   There exists an infinite increasing sequence  $Y_1<Y_2<\cdots< Y_m\cdots $ such that  
    \begin{enumerate}
        \item For any $\epsilon>0$, there exists a  constant  $R_\epsilon$ depending only on $\epsilon$, $\Delta$, and $E$ such that  
        \[
        \pi_E(Y_m, \Q(\sqrt{-\Delta}))\geq R_\epsilon (\log Y_m)^{1-\epsilon};
        \]
        \item There is a constant $R_0$ depending only on  $\Delta$ and $E$ such that  
        \[
        \log Y_m\leq R_0 Y_{m-1}(\log Y_{m-1})^2.
        \]
    \end{enumerate}
\end{proposition}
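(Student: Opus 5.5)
We will build the sequence $Y_m$ by running the Euclid-type construction of \Cref{sec:newprimes} in ``batches'', using many admissible primes at once rather than one. Start, via \Cref{prop:nonempty}, with a nonempty finite set $\Sigma^{(1)}\subset\mathcal{E}(E)$ (after a twist), and put $Y_1:=\max\Sigma^{(1)}$. Given $\Sigma^{(m-1)}$ with maximal element $p(m-1)\le Y_{m-1}$, let $K_{m-1}$ be the constant of \Cref{prop:admissible} attached to $\Sigma^{(m-1)}$; by \eqref{eq:conductor-bound} we have $K_{m-1}\ll_{E,\Delta}Y_{m-1}$. By \Cref{prop:admissible}, every prime $\ell$ in the interval $(C_0K_{m-1},\,2C_0K_{m-1}]$ is admissible. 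By \Cref{lem:distinct-newprimes}, distinct such $\ell$ yield pairwise distinct new primes $p(\ell)\mid P_{\ell^2\Delta^*}(j_E)$, all outside $\Sigma^{(m-1)}$. By the argument of \Cref{lem:new-prime} (and the twist-invariance discussion at the start of \Cref{sec:Theorem2}), each of these primes lies in $\mathcal{E}(E)\cup\{p\mid N_E\}$. Let $\Sigma^{(m)}$ be $\Sigma^{(m-1)}$ together with all these new primes, and let $Y_m:=\max\Sigma^{(m)}$. We may need to enlarge $Y_m$ slightly to force strict increase, e.g.\ by taking $Y_m:=\max(\max\Sigma^{(m)},Y_{m-1}+1)$.

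For (2), \Cref{lem:recursive} and \Cref{lem:height} give $\log p(\ell)\ll_{E,\Delta}\ell(\log\ell)^2$ with $\ell\ll Y_{m-1}$. Hence $\log Y_m\le R_0Y_{m-1}(\log Y_{m-1})^2$.

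For (1), we need a lower bound on how many primes have been collected. The number of new primes produced at step $m$ is at least the number of primes in $(C_0K_{m-1},2C_0K_{m-1}]$, which is $\gg K_{m-1}/\log K_{m-1}$. Since $K_{m-1}\ge p(m-1)$, and $p(m-1)$ is comparable to $Y_{m-1}$, the count satisfies
\[
\pi_E(Y_m,\Q(\sqrt{-\Delta}))\ \ge\ \#\Sigma^{(m)}-\omega(N_E)\ \gg\ \frac{Y_{m-1}}{\log Y_{m-1}}.
\]
Combining this with (2), $\log Y_m\ll Y_{m-1}(\log Y_{m-1})^2$, we get $Y_{m-1}/\log Y_{m-1}\gg(\log Y_m)/(\log Y_{m-1})^3\gg(\log Y_m)^{1-\epsilon}$. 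The last step uses $\log Y_{m-1}\ll\log\log Y_m$, which follows from (2) since $Y_{m-1}\le\log Y_m$ up to lower-order terms. This gives (1) with a constant $R_\epsilon$ depending on $E,\Delta,\epsilon$ through the implied constants and through $c(E,\Delta)$.

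The main obstacle is making the relation between $Y_{m-1}$ and $K_{m-1}$ rigorous. The new primes may be small: as remarked, $p_{k+1}$ need not exceed $p(k)$. So $Y_m$ must be defined as the maximum over the whole accumulated set, and $K_{m-1}$ must be taken $\asymp Y_{m-1}$ rather than merely $\ge$ it. For this I would redefine $K_{m-1}:=\max\{Y_{m-1},3,\Delta^*,c(E,\Delta)\}$. That only enlarges the admissible range, while keeping $\ell\ll Y_{m-1}$, which is needed in (2).

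The twisting also needs care. Each new prime uses possibly a different twist $E'(\ell)$. But the good ordinary primes with the given Frobenius field agree for $E$ and its twists up to primes dividing $N_E$, and the twists only introduce bad reduction at $2$, which is excluded by the $2$-adic argument of \Cref{prop:nonempty}. So the count for $E$ itself loses at most $\omega(N_E)$ primes, which is harmless.
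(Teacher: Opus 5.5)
\textbf{The last step of your argument for (1) is backwards.} Property (2) is an upper bound, $\log Y_m\le R_0Y_{m-1}(\log Y_{m-1})^2$. Taking logarithms gives $\log\log Y_m\le \log Y_{m-1}+O(\log\log Y_{m-1})$, not $\log Y_{m-1}\ll\log\log Y_m$. Likewise, ``$Y_{m-1}\le \log Y_m$'' is the reverse of what (2) says.

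Your construction gives no lower bound on $Y_m$ beyond $Y_m>Y_{m-1}$. For example, when your adjustment $Y_m=Y_{m-1}+1$ is active, $(\log Y_m)/(\log Y_{m-1})^3\approx(\log Y_{m-1})^{-2}\to 0$. So the second $\gg$ in your chain is false. The same failure occurs whenever $Y_m$ is only polynomial in $Y_{m-1}$.

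The conclusion is still true, and it follows from the same two facts. Substitute (2) directly rather than passing through the ratio:
\[
(\log Y_m)^{1-\epsilon}\le R_0^{1-\epsilon}\,Y_{m-1}^{1-\epsilon}(\log Y_{m-1})^{2}=o\left(\frac{Y_{m-1}}{\log Y_{m-1}}\right),
\]
since $Y_{m-1}^{\epsilon}$ dominates $(\log Y_{m-1})^{3}$. Combined with your count $\pi_E(Y_m,\Q(\sqrt{-\Delta}))\gg Y_{m-1}/\log Y_{m-1}$, this gives (1) for all large $m$. Shrinking $R_\epsilon$ covers the finitely many small $m$. This is exactly how the paper finishes, bounding $(\log X')^{1-\epsilon}$ via $\log X'\ll Y_m(\log Y_m)^2$.

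With that repair your proof works, and it differs from the paper's in a useful way.

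\emph{The paper's route.} It takes $\Sigma_0(E)$ to be \emph{all} primes counted by $\pi_E(Y_m,\Q(\sqrt{-\Delta}))$. Then every new prime not dividing $N_E$ automatically exceeds $Y_m$, and $Y_{m+1}$ is simply the largest such prime. It also branches on whether some $X\in(Y_m,2Y_m]$ already satisfies (1). That branch makes its sequence depend on $\epsilon$.

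\emph{Your route.} You accumulate batches $\Sigma^{(m)}$, take $Y_m$ to be the maximum of the accumulated set (forcing strict increase by hand), and take $K_{m-1}\asymp Y_{m-1}$. You never refer to $\epsilon$, so a single sequence works for every $\epsilon$ at once. This matches the order of quantifiers in the statement better than the paper's proof does.

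\emph{The bookkeeping cost.} Primes of $\Sigma^{(m-1)}$ that divide $N_E$ are good ordinary only for a twist, so \Cref{prop:admissible} is not literally applicable to $\Sigma^{(m-1)}$. Either apply its argument to that twist, which is harmless since $P_{\ell^2\Delta^*}(j_E)$ depends only on $j_E$, or simply discard those at most $\omega(N_E)$ primes.
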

\begin{proof}
We construct the sequence inductively. Fix \(\epsilon>0\), and choose constants \(R_\epsilon>0\) sufficiently small and \(R_0>0\) sufficiently large. By \Cref{cor:set-infinite}, we have
\[
\pi_E(X,\Q(\sqrt{-\Delta}))\to\infty
\qquad \text{as } X\to\infty.
\]
Suppose that \(Y_1,\ldots,Y_m\) have already been constructed, and that \(Y_m\) is sufficiently large (depending only on $E$ and $\Delta$).  

If there exists  $X\in (Y_m, 2Y_m]$ such that  
\[
\pi_E(X, \Q(\sqrt{-\Delta}))\geq  R_\epsilon (\log X)^{1-\epsilon}
\]
then take $Y_{m+1}=X$. Since \(Y_m\) is sufficiently large, we may enlarge \(R_0\) if necessary so that
\[
\log Y_{m+1}\leq \log (2Y_m)\leq R_0 Y_{m}(\log Y_m)^2.
\]
This justifies the desired properties (1) and (2).

Otherwise, let $\Sigma_0(E)$ be the set of primes counted by $ \pi_E(Y_m, \Q(\sqrt{-\Delta}))$.
By \Cref{prop:admissible}, there exists an absolute constant $C_0$ such that each  prime $\ell\in (C_0Y_m, 2C_0Y_m]$ is admissible with respect to $\Sigma_0(E)$. By \Cref{lem:new-prime}, for each admissible $\ell$,  there exists  a quadratic twist $E'(\ell)$ of $E$ and a new prime $p(\ell)\in \mathcal{E}(E'(\ell))$. Hence, by \Cref{lem:distinct-newprimes} and  the explicit  prime number theorem (see e.g., \cite[Corollary 1]{MR137689}), we have produced at least $\frac{C_0Y_m}{1000\ \log (C_0Y_m)}$ distinct new primes not in  $\Sigma_0(E)$. After discarding those among them that divide $N_E$, let $X'$ denote the largest of the remaining new primes. By the definition of $\Sigma_0(E)$, such a prime is greater than $Y_m$, hence $X'>Y_m$.  

From the height bound in  \Cref{lem:height}, we obtain 
    \[
   \log  X'\ll_{E, \Delta} C_0Y_m(\log (C_0Y_m))^2.
    \]
    Thus condition (2) holds.  
Since $Y_m$ is sufficiently large,  we may decrease the value $R_\epsilon$ if necessary to get 
\[
 \pi_{E}(X', \Q(\sqrt{-\Delta}))\geq  \frac{C_0Y_m}{1000\ (\log (C_0Y_m))}-\omega(N_E)\geq R_\epsilon (\log X')^{1-\epsilon},
\]
where $\omega(n)$ is the number of distinct prime divisors of $n\in \Z_{\geq 1}$.  Taking $Y_{m+1}=X'$ gives the desired properties. This completes the inductive construction.


 Repeating this construction inductively, we obtain an infinite sequence  $\{Y_m\}_{m\geq 1}$ satisfying the required properties. This completes the proof of the proposition.
\end{proof}

Now we are ready to give the lower bound. 

\begin{proof}[Proof of \Cref{thm:lower-bound}]
      The infinitude of the primes is proved in  \Cref{cor:set-infinite}.    Let $x>0$ be a sufficiently large real number. Then, for the sequence given by  \Cref{prop:recursive}, there exists an index  $m$ such that 
    \[
    Y_m<x<Y_{m+1}.
    \]
Since $\log Y_{m+1}\leq R_0 Y_{m}(\log Y_{m})^2$, 
    for any small $\epsilon>0$, we obtain from \Cref{prop:recursive} that   
    \begin{align*}
     \pi_E(x, \Q(\sqrt{-\Delta}))  &  \geq  \pi_E(Y_m, \Q(\sqrt{-\Delta}))\geq R_\epsilon (\log Y_m)^{1-\epsilon}\\
     & \gg_{E, \Delta, \epsilon}    (\log\log Y_{m+1})^{1-\epsilon}\gg_{E, \Delta, \epsilon} (\log\log x)^{1-\epsilon}.
    \end{align*}
\end{proof}

\section{The upper bound}\label{sec:proof-upper}

We keep the notation from \Cref{sec:Theorem2}.  
Let $E/\Q$ be a non-CM elliptic curve and $\Delta\geq 1$ be a squarefree integer.

Before proving \Cref{thm:upper-bound}, we first give a ``trivial'' unconditional upper bound  of $\pi_E(x, \Q(\sqrt{-\Delta}); \delta)$ for any $0<\delta<1/2$. 
Recall from the beginning of   \Cref{sec:Theorem2} that if $\Delta$ is a rational prime, then among all supersingular primes of  $E$, at most one can contribute to  $\pi_E(x, \Q(\sqrt{-\Delta}))$.
 If $\Delta$ is not a rational prime, then apart from the exceptional case 
where  $\Delta=1$ and $p=2$, every other prime $p$ counted by $\pi_E(x, \Q(\sqrt{-\Delta}))$ is ordinary. 
Therefore, it suffices to bound ordinary primes  $p\nmid N_E$ for which $\Q(\pi_p(E_p))=\Q(\sqrt{-\Delta})$. For such primes $p$, we have 
$
\End_{\overline{\F}_p}(E_p)\simeq \mathcal{O}_{f_p^2\Delta^*},
$
where
\[
f_p:=[\mathcal{O}_{\Q(\sqrt{-\Delta})}: \End_{\overline{\F}_p}(E_p)].
\]
From the Hasse bound for $a_p(E)$, we derive that  $1\le f_p\leq 2\sqrt{p}$ (see also  \eqref{eq:conductor-bound}). 
Therefore, by \Cref{lem:deuring2},  
every such prime $p$  divides $\Num(P_{f_p^2\Delta^*}(j_E))$ for some  $1\le f_p\leq 2x^{1/2}$, where $\Num(x)$ is the numerator of the rational number $x$. In particular, if $\delta=1/2$, then  $\pi_{E}(x, \Q(\sqrt{-\Delta}))=\pi_{E}(x, \Q(\sqrt{-\Delta}); \delta)$. 

For any $0<\delta<1/2$ and squarefree integer $D\geq 1$, 
we denote by 
\[
G_{\delta}(x; D):=\#\{p\leq x: p\mid \Num(P_{f^2D}(j_E)) \text{ for some $f\in [1, 2x^{\delta}]$}\}.
\]
If $D=\Delta^*$ with $\Delta$ as in \Cref{thm:upper-bound}, we simply write 
\[
G_{\delta}(x):=G_{\delta}(x; \Delta^*).
\]
From the preceding discussion, we see that 
\begin{equation}\label{eq:thm2-G}
\pi_{E}(x, 
\Q(\sqrt{-\Delta}); \delta)\leq G_\delta(x)+O_E(1).
\end{equation}
We have the following ``trivial"  upper bound of  $G_\delta(x)$.

\begin{proposition}\label{lem:trivial} Let $E$ and $\Delta$ be as above. If $0<\delta<1/2$,  there exists $\Delta_0>0$ such that, for each 
$\Delta>\Delta_0$, there exists $x_0=x_0(\Delta,E,\delta)>0$ satisfying 
\[
G_\delta(x)\ll_{E}  \sqrt{\Delta}(\log \Delta+ \log x) x^{2\delta} 
\]
\end{proposition}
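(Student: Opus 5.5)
Each prime $p\le x$ counted by $G_\delta(x)$ divides $\Num(P_{f^2\Delta^*}(j_E))$ for some $1\le f\le 2x^{\delta}$. We may discard the $f$ with $P_{f^2\Delta^*}(j_E)=0$: there are none, since $E$ is non-CM. This gives the trivial union bound
\[
G_\delta(x)\le \sum_{1\le f\le 2x^{\delta}} \omega\bigl(\Num(P_{f^2\Delta^*}(j_E))\bigr),
\]
and the whole proof consists of bounding each summand and adding. One could use $\omega(n)\ll \log n/\log\log n$, but it is cleaner to keep only primes $p\le x$. Indeed, the number of distinct primes $\le x$ dividing a nonzero integer $n$ is at most $\log|n|/\log 2$. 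More usefully, if we restrict to primes $p>$ some threshold, it is at most $\log|n|/\log(\text{threshold})$. However, the target bound carries only a factor $(\log\Delta+\log x)$, so the crude $\log|n|/\log 2$ will do.

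For each $f$ we apply \Cref{lem:height} with $D=f^2\Delta^*$. We need to track the dependence on $\Delta$, since the implied constant there depends only on $E$. Its proof gives
\[
\log|\Num(P_D(j_E))|\ll_E h(D)+\mathbf{h}(P_D)\ll \sqrt{D}(\log D)^2 .
\]
That would give a total of $\sqrt{\Delta}\,x^{2\delta}(\log\Delta+\log x)^2$, which is one log too many. To save the log, I would instead write
\[
\log|P_D(j_E)|=\sum_k\log|j_E-j_k|\le \sum_k \log^+|j_k|+O_E(h(D)).
\]
Here $\sum_k\log^+|j_k|=h(D)\,\mathbf{h}(j_k)$, and the average height of singular moduli satisfies $\mathbf{h}(j_k)\ll\log D$: the dominant term is $\sum\pi\sqrt{D}/a$ over reduced forms, and this equals $h(D)\cdot O(\log D)$ on average. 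Hence $\log|P_D(j_E)|\ll_E h(D)\log D$. The same bound holds for $\Num$, because $j_E\in\Q$ contributes only a denominator power $h(D)\log\mathrm{den}(j_E)$. Combining this with $h(f^2\Delta^*)\ll f\,h(\Delta^*)\prod_{q\mid f}(1+1/q)\ll f\sqrt{\Delta}\log\Delta\cdot\log\log f$ is still slightly lossy. Instead I would bound $\sum_{f\le F}h(f^2\Delta^*)$ on average: $\sum_{f\le F} f\prod_{q\mid f}(1+1/q)\ll F^2$. Together with $h(\Delta^*)\ll\sqrt{\Delta}\log\Delta$, this gives $\sum_{f\le 2x^\delta}h(f^2\Delta^*)\ll \sqrt{\Delta}\log\Delta\, x^{2\delta}$.

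The $\log D\ll\log\Delta+\log x$ factor then yields
\[
G_\delta(x)\ll_E \sqrt{\Delta}(\log\Delta)(\log\Delta+\log x)\,x^{2\delta}.
\]
This is off by a $\log\Delta$ from the statement, so the class number input must be sharper. The hypothesis $\Delta>\Delta_0$ suggests how to fix this: I would use Siegel-type or GRH-free average bounds, or simply $h(\Delta^*)\ll\sqrt{\Delta}\log\Delta$ while dividing by $\log p$. Concretely, we count primes by $\sum_{p\mid n}1\le \log|n|/\log(\min p)$ after discarding small primes. The primes $p$ dividing $P_{f^2\Delta^*}(j_E)$ that are not bounded-size exceptions must split in $\Q(\sqrt{-\Delta})$, and are ordinary with $4p\ge f^2\Delta^*$ by \eqref{eq:conductor-bound}, so $p\gg\Delta$ and $\log p\gg\log\Delta$. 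Dividing by $\log p\ge\log\Delta$ removes exactly one $\log\Delta$; the primes $p<\Delta$ are only $O(\Delta)$ in number, which is absorbed for large $\Delta$. (If $p\mid f$ the second part of \Cref{lem:deuring2} doesn't apply, but those $p\le 2x^\delta$ contribute $\ll x^\delta$.) Finally, $x_0$ is chosen so that the asymptotics for the class-number sums kick in.

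The main obstacle is bookkeeping the logarithms. The step I expect to be delicate is justifying $\log|\Num(P_D(j_E))|\ll_E h(D)\log D$ rather than the $\sqrt D(\log D)^2$ of \Cref{lem:height}, together with the lower bound $p\gg f^2\Delta$ that is used to divide by $\log p$.
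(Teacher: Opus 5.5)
There is a genuine gap, at the step you flagged as delicate. Your final count needs $\log|\Num(P_D(j_E))|\ll_E h(D)\log D$, i.e.\ that the average Weil height $\mathbf{h}(j)$ of the singular moduli of discriminant $-D$ is $O(\log D)$ uniformly in $\Delta$. This is not known unconditionally. Elementary counting of reduced forms only gives $\sum_{(a,b,c)}1/a\ll(\log D)^2$, and $\pi\sqrt{D}\sum 1/a\ll\sqrt{D}(\log D)^2$ is exactly the bound of \Cref{lem:height}, not something better. Moreover, via the Chowla--Selberg formula, $\mathbf{h}(j)\ll\log D$ already for $f=1$ would give $\frac{L'}{L}(1,\chi_{-\Delta^*})\ll\log\Delta$, and that excludes Siegel zeros.

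Without the height claim, your cutoff at $p\ge\Delta$ only divides by $\log\Delta$. It therefore cannot absorb the extra $\log x$ that \Cref{lem:height} produces once $x$ is large compared with $\Delta$, and your argument stops at $\sqrt{\Delta}\,x^{2\delta}(\log\Delta+\log x)^2/\log\Delta$. Two smaller points. First, primes dividing $P_{f^2\Delta^*}(j_E)$ need not split in $\Q(\sqrt{-\Delta})$: inert primes of supersingular reduction for $E$, of unbounded size, can divide it. This is harmless, since $\log p\ge\log\Delta$ is trivial for $p\ge\Delta$. Second, the $O(\Delta)$ small primes are absorbed because $x\ge x_0(\Delta)$, not because $\Delta$ is large.

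The missing idea is simply to put the cutoff at the scale of $\sqrt{D}$ rather than $\Delta$. That is what the paper does, using the tool you set aside. Since $t\mapsto t/\log t$ is increasing, $\omega(n)\ll\log n/\log\log n$ together with \Cref{lem:height} gives $\omega(\Num(P_{D}(j_E)))\ll_E\sqrt{D}(\log D)^2/\log\sqrt{D}\ll\sqrt{D}\log D$ for $D=f^2\Delta^*$. Summing over $f\le 2x^{\delta}$, with $\sqrt{D}\ll x^{\delta}\sqrt{\Delta}$ and $\log D\ll\log\Delta+\log x$, gives the statement.

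In your own language: count primes $p\ge T:=x^{\delta}\sqrt{\Delta}$ dividing each $\Num(P_{f^2\Delta^*}(j_E))$ by $\log|n|/\log T$. Note that $D\le 16T^2$, so $\log D\le 2\log T+\log 16$ and each $f$ contributes $\ll_E\sqrt{D}\log D$. Then bound all primes below $T$ at once by $T\le\sqrt{\Delta}\,x^{2\delta}$. No averaging of class numbers, no heights of individual singular moduli, and no use of $\Delta>\Delta_0$ are needed.
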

\begin{proof}
    By \Cref{lem:height}, we have 
    \begin{align*}
        G_\delta(x) \leq  \sum_{1\leq f\leq 2x^\delta} \omega(\Num(P_{f^2\Delta^*}(j_E))) & \ll x^\delta\max_{1\leq f\leq 2x^\delta}\frac{\log (\Num(P_{f^2\Delta^*}(j_E)))}{\log\log(\Num(P_{f^2\Delta^*}(j_E)))}\\
        & \ll_{E} x^\delta\cdot  \frac{x^\delta\sqrt{\Delta}(\log (x\sqrt{\Delta}))^2}{\log(x\sqrt{\Delta})}\\
        & \ll_E \sqrt{\Delta}(\log \Delta+ \log x) x^{2\delta},
    \end{align*}
    where $\omega(n)$ is the number of distinct prime divisors of $n$, which satisfies $\omega(n)\ll \frac{\log n}{\log\log n}$.  
\end{proof}

In what follows, for each fixed $\Delta>\Delta_0$, we obtain a moderate  unconditional improvement of \Cref{lem:trivial} for sufficiently large $x$, using Arakelov intersection theory on the coarse moduli scheme  $X(1)$ over $\operatorname{Spec}\Z$ of generalized elliptic curves.  

\subsection{Proof of \Cref{thm:upper-bound}}\label{sec:proof-thm-upper}

Recall the notation from \Cref{sec:equidistribution}. 

We will apply \Cref{prop:Charles} by taking   $Z$ and $Y$ to be the horizontal divisors on $X(1)_\Z$ obtained by taking  the Zariski closures of the point  $j_E$ and the irreducible divisor  \begin{equation}\label{eq:heggner}
y_{\Delta^*}:=j_1+\ldots +j_{h(\Delta^*)},    
\end{equation}
inside $X(1)_{\Z}$, respectively, 
where $j_1, \ldots, j_{h(\Delta^*)}$ are the $j$-invariants of elliptic curves with CM by the maximal order  $\mathcal{O}_{\Q(\sqrt{-\Delta})}$. Note that $Y$ and $Z$ are defined over $\Q$ and have no common components since $E/\Q$ is a non-CM elliptic curve. 

Let $N\geq 1$ be an integer. We recall a classical result on the Hecke orbits of CM points in order to describe $t_{N*}Y$. Let  $d>0$ be an integer such that $-d$ is  a fundamental discriminant.  We denote by 
\[
y_d:=j_1+\cdots +j_{h(d)}
\]
the corresponding CM divisor  on $X(1)_\C$, consisting of all  
CM $j$-invariants of discriminant $-d$.  For each fundamental discriminant $-d$, we also denote by $R_d:\mathbb{\Z}_{\geq 1} \to \Z_{\ge 0}$ the function defined by
\[
R_d(n):=\#\{I \trianglelefteq \mathcal{O}_{\Q(\sqrt{-d})}: N(I)=n\}. 
\]
The image of $y_{\Delta^*}$ under the Hecke correspondence $T_N$ in the sense of \cite[Lemma 2.6]{MR2058609} (see also \cite[Proposition 4.2.1]{MR1826411} or \cite[Section 2C]{MR4129386})  is given by 
\begin{equation}\label{eq:Hecke-CM}
T_N(y_{\Delta^*})=\begin{cases}
R_{\Delta^*}(N) y_{\Delta^*}+R_{\Delta^*}(1)y_{N^2\Delta^*}+\sum_{\substack{1<f<N \\ f\mid N}} R_{\Delta^*}(N/f) y_{f^2\Delta^*}   & \text{ if $\Delta^*\neq 3, 4$}\\
 R_{\Delta^*}(N) y_{\Delta^*}+3R_{\Delta^*}(1)y_{N^2\Delta^*}+3\sum_{\substack{1< f< N\\ f\mid N}}  R_{\Delta^*}(N/f) y_{f^2\Delta^*}       & \text{ if $\Delta^*=3$}\\
R_{\Delta^*}(N) y_{\Delta^*}+2R_{\Delta^*}(1)y_{N^2\Delta^*}+2 \sum_{\substack{1<f< N\\ f\mid N}}  R_{\Delta^*}(N/f) y_{f^2\Delta^*}        & \text{ if $\Delta^*=4$}\\
\end{cases}.
\end{equation}
On the other hand, the \(N\)-th Hecke orbit \(t_{N*}\) defined in \Cref{sec:equidistribution} only involves cyclic isogenies. Therefore, by CM theory and the proof of \cite[Proposition 4.2.1]{MR1826411}, the expression for \(t_{N*}y_{\Delta^*}\) is analogous to \eqref{eq:Hecke-CM} with each coefficient \(R_{\Delta^*}(n)\) replaced by 
\[
r_{\Delta^*}(n):=\#\{I \trianglelefteq \mathcal{O}_{\Q(\sqrt{-\Delta})}: \mathcal{O}_{\Q(\sqrt{-\Delta})}/I \text{ is a cyclic group  and }N(I)=n\}.
\]
Moreover, we have $\deg(t_{N*}y_{\Delta^*})=e_N h(\Delta^*)$, where $e_N=N\prod_{p\mid N}(1+1/p)$. For each prime $p$ coprime to the conductor of $E$ and to the conductors of all CM elliptic curves with CM by $\mathcal{O}_{\Delta^*}$, we see
the local arithmetic degree  at $p$ is 
\begin{equation}\label{eq:local-degree}
\deg_p(Z.t_{N*}Y)=\sum_{\alpha\in |t_{N*}y_{\Delta^*}|}\max\{0,-\log |j_{\alpha}-j_E|_p\}, 
\end{equation}
and at the unique archimedean place, we have
\[
\log ||s_{Z}(t_{N*}Y) ||=\sum_{\alpha\in |t_{N*}y_{\Delta^*}|} \log\left(|j_\alpha-j_E| |\Delta(\tau_{\alpha}) |\Im(\tau_\alpha)^6 \right),
\]
where $j_\alpha=j(\tau_\alpha)$,      $\tau_\alpha\in X(1)(\C)=\SL_2(\Z)\backslash \overline{\mathbb{H}}$, and $\Im(\tau_\alpha)>0$ is the imaginary part of $\tau_\alpha$.

Note that $|\Delta(\tau_{\alpha}) |\Im(\tau_\alpha)^6$ is continuous and nonvanishing on $\SL_2(\Z)\backslash \mathbb{H}$, and from the Fourier expansion of $\Delta(\tau_{\alpha})$ near the cusp, this quantity decays exponentially as  $\Im (\tau_\alpha)\to \infty$. We now estimate this quantity in  terms of $j_\alpha$. 
Since $\Delta(\tau)=q+O(q^2)$, where $q=e^{2\pi i \tau}$,  we have  
\begin{equation}\label{eq:delta}
|\Delta(\tau)\Im (\tau)^6|=\Im(\tau)^6 e^{-2\pi \Im(\tau)}(1+o(1)), \quad \text{ as $\Im \tau \to \infty$}. 
\end{equation}
On the other hand, since 
$j(\tau)=q^{-1}+744+O(q),$
\begin{equation}\label{eq:j-invariant}
|j(\tau)|=e^{2\pi \Im(\tau)}(1+o(1)), \quad \text{as $\Im \tau \to \infty$}.
\end{equation}
Combining \eqref{eq:delta} and \eqref{eq:j-invariant}, we derive that as $\Im(\tau_\alpha) \to \infty$, 
\[
|j_\alpha-j_E||\Delta(\tau_\alpha)|\Im (\tau_\alpha)^6 \asymp |j_\alpha-j_E|\frac{(\log |j_{\alpha}|)^6}{|j_{\alpha}|} \asymp_E (\log |j_{\alpha}|)^6.
\]

From CM theory, there exists a unique ideal class $\mathfrak{a}$ in the class group of $\mathcal{O}_{N^2\Delta^*}$ such that   $\alpha\simeq \C/\mathfrak{a}$ as complex elliptic curves. We associate to $\mathfrak{a}$ a reduced quadratic form $Ax^2+Bxy+Cy^2$ of discriminant $-N^{2}\Delta^*$  with $|B|\leq A\leq C$, and  
\[
\tau_\alpha=\frac{-B+N\sqrt{-\Delta^*}}{2A}\in \mathbb{H}.
\]
Hence,  we obtain the upper bound $\Im (\tau_\alpha)\ll N\sqrt{\Delta^*}$.    Again using the $q$-expansion of $j(\tau)$, we see that 
\[
\log |j_{\alpha}|\ll \Im (\tau_\alpha) \ll N\sqrt{\Delta^{*}} 
\]
as $\Im(\tau_\alpha)\to \infty$.
Combining the above estimates, we obtain  
\begin{equation}\label{eq:arch-estimation}
   \log ||s_{Z}(t_{N*}Y) || \ll_{E} e_N h(\Delta^*)\log (N\sqrt{\Delta^*}).  
\end{equation}

Next, we estimate the non-archimedean contribution.  Note that by Deuring's lifting lemma and \eqref{eq:Hecke-CM},  for every prime $p$ counted by  $G_\delta(x)$, there is a prime ideal $\fp$ in $\overline{\Q}$ over $p$ whose residue field is $\F_p$, such that there exist $f_{0}(p)\in [1, 2x^{\delta}]$ and $\alpha\in  |t_{f_0*}y_{\Delta^*}|$ with $\alpha_\fp\simeq E_p$. In particular, 
\[
\fp\mid (j_\alpha-j_E).
\]
Now by \eqref{eq:Hecke-CM},  \eqref{eq:local-degree}, and positivity of the local intersection number,  we obtain the lower bound for the local intersection number
\begin{equation}\label{eq:non-arch-lower}
   \deg_p(Z.t_{N*}Y)\geq -r_{\Delta^*}(1)\log |j_\alpha-j_E|_p \geq -r_{\Delta^*}(1)\log |p|_p\geq  r_{\Delta^*}(1)\log p 
\end{equation}
for $N=f_0(p)$.

Denote by \begin{equation}\label{eq:F-delta}
\mathcal{F}_\delta:=\{f_0(p)\in [1, 2x^{\delta}]: p \text{ is counted by $G_{\delta}(x)$}\}. 
\end{equation}
Let $\eta$ be a real number such that $0<\eta\leq \delta$, and let $\mathcal{G}_\eta$ be any subset of $\mathcal{F}_\delta$ satisfying  
\[
\#\mathcal{G}_\eta\leq C_{\mathcal{G}_\eta}  x^{\eta}
\]
 for some constant $C_{\mathcal{G}_\eta}\in (0, 2]$.  Denote by 
 \[
G_\mathcal{G_\eta}(x):=\#\{x^{1/2}\le p\leq x: p\mid \Num(P_{f^2\Delta^*}(j_E)) \text{ for some $f\in \mathcal{G}_\eta$}\}. 
\]
Note that 
\begin{equation}\label{eq:general-bound}
    G_\delta(x)=G_{\mathcal{F_\delta}}(x)+O(x^{1/2}). 
\end{equation}
We first give an upper bound of $G_{\mathcal{G_\eta}}(x)$ with $0<\eta\leq \delta$.

\begin{proposition}\label{prop:general-bound-G}
    Let $E$, $\Delta$, $\eta$ be as  above. Let $0<\eta\leq \delta$.  Then, for all sufficiently large $x$,  
    \[
 G_\mathcal{G_\eta}(x)\ll_E \sqrt{\Delta}(\log \Delta) x^{\delta+\eta}\log\log x.
    \]
\end{proposition}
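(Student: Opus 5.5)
We sum the global intersection identity of \Cref{prop:Charles} over the conductors in $\mathcal{G}_\eta$, bound the archimedean side from above using \eqref{eq:arch-estimation}, and bound the finite side from below using \eqref{eq:non-arch-lower}, one contribution of size $\log p\ge \frac12\log x$ for each prime counted by $G_{\mathcal{G}_\eta}(x)$.

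\emph{Setup.} Take $Z$ to be the closure of $j_E$, which has relative degree $d'=1$, and $Y$ the closure of $y_{\Delta^*}$, which has relative degree $d=h(\Delta^*)$. By the argument before \eqref{eq:non-arch-lower}, each prime $p\in[x^{1/2},x]$ counted by $G_{\mathcal{G}_\eta}(x)$ has some $f_0(p)\in\mathcal{G}_\eta$. Group these primes by $N=f_0(p)$. Since every local degree is nonnegative, \eqref{eq:non-arch-lower} gives
\[
\sum_{p:\,f_0(p)=N}\tfrac12 r_{\Delta^*}(1)\log x\;\le\;\sum_{p}\deg_p(Z.t_{N*}Y)=\widehat{\deg}(Z.t_{N*}Y).
\]
Here $r_{\Delta^*}(1)=1$. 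Summing over $N\in\mathcal{G}_\eta$ yields
\[
G_{\mathcal{G}_\eta}(x)\,\log x\ll \sum_{N\in\mathcal{G}_\eta}\widehat{\deg}(Z.t_{N*}Y).
\]
A few primes dividing the conductors or $N_E$ may fall outside the formula \eqref{eq:local-degree}; they contribute $O_E(1)$, which is negligible.

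\emph{Bounding each term.} For each $N$, \Cref{prop:Charles} gives
\[
\widehat{\deg}(Z.t_{N*}Y)=\log\|s_Z(t_{N*}Y)\|+6h(\Delta^*)e_N\log N+O\bigl(h(\Delta^*)e_N(\log\log N+h'(Y))\bigr).
\]
We bound the pieces as follows.
\begin{itemize}
\item By \eqref{eq:arch-estimation}, $\log\|s_Z(t_{N*}Y)\|\ll_E e_Nh(\Delta^*)\log(N\sqrt{\Delta^*})$.
\item We have $e_N\ll N\log\log N\ll x^{\delta}\log\log x$ for $N\le 2x^\delta$.
\item We have $h(\Delta^*)\ll\sqrt{\Delta}\log\Delta$.
\item We need $h'(Y)\ll\log\Delta$. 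The normalized height of a CM divisor is $O(\log\Delta)$ by the Colmez/Nakkajima–Taguchi formula, or crudely via Faltings heights of CM curves.
\end{itemize}
Combining, each term is $\ll_E \sqrt{\Delta}\log\Delta\,x^{\delta}\log\log x\,(\log x+\log\Delta)$.

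\emph{Summation.} Summing over at most $2x^{\eta}$ values of $N$ and dividing by $\tfrac12\log x$ gives
\[
G_{\mathcal{G}_\eta}(x)\ll_E\sqrt{\Delta}\log\Delta\,x^{\delta+\eta}\log\log x\cdot\frac{\log x+\log\Delta}{\log x}.
\]
Since $\Delta$ is fixed and $x$ is large, the last factor is $O(1)$, which gives the claim.

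\emph{Main obstacle.} \Cref{prop:Charles} only holds "for sufficiently large $N$", and the threshold may depend on $Y$, hence on $\Delta$. For the finitely many small $N\in\mathcal{G}_\eta$ below this threshold, we instead use the trivial bound $\omega(\Num(P_{N^2\Delta^*}(j_E)))\ll_{E,\Delta}1$ coming from \Cref{lem:height}. This contributes $O_{\Delta,E}(1)$, which is absorbed once $x$ is large; this is why the statement requires $x$ sufficiently large.

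A second point needing care is that the error term and the main term $6e_N\log N$ must not overwhelm the saving. They do not: both are of the same order $e_Nh(\Delta^*)\log x$ as the archimedean bound, so the estimate is uniform in $N$ up to the $\log\log$ factor. The dependence on $\Delta$ through $h'(Y)$ and $h(\Delta^*)$ is what produces $\sqrt{\Delta}\log\Delta$.
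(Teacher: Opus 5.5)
Your proposal is correct and follows the paper's argument. You sum \Cref{prop:Charles} over $N\in\mathcal{G}_\eta$, bound the finite part from below by $\frac12\log x$ per prime using \eqref{eq:non-arch-lower}, and bound the archimedean, main and error terms from above by $\ll_E h(\Delta^*)e_N(\log x+\log\Delta)$; the differences are cosmetic (the paper argues by contradiction, you estimate directly), and you additionally dispose of the finitely many $N$ below the threshold in \Cref{prop:Charles}, a point the paper passes over.
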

\begin{proof}
    Let $\mathfrak{C}(\Delta)$ be a constant to be specified later and assume there are infinitely many $x\in \R$ satisfying  
    \begin{align*}
G_{\mathcal{G}_\eta}(x)\geq \mathfrak{C}(\Delta) x^{\delta+\eta}\log\log x.
\end{align*}
For any such $x$ satisfying $x>\Delta^*$, and $N\in \mathcal{G}_\eta$,  apply \Cref{prop:Charles} to each $N$ and then sum over $N$.  
Using the bound $e_N\ll N\log\log N$,  \eqref{eq:arch-estimation}, and \eqref{eq:non-arch-lower},  the left hand side of \eqref{eq:Charles} becomes
\begin{align*}
& \sum_{N\in \mathcal{G}_\eta}\left( \widehat{\deg}(Z.t_{N*}Y)- \log||s_Z(t_{N*}Y)||\right)\\
\geq  & \sum_{N\in \mathcal{G}_\eta}\left( \sum_{p \text{ counted by $G_{\mathcal{G}_\eta}(x)$}}\deg_p(Z.t_{N*}Y)-\log||s_Z(t_{N*}Y)||\right)\\
\geq  & \sum_{p \text{ counted by $G_{\mathcal{G}_\eta}(x)$}} 
\deg_p(Z.t_{f_0(p)*}Y)- \sum_{N\in \mathcal{G}_\eta} \log||s_Z(t_{N*}Y)||\\
\geq & r_{\Delta^*}(1) G_{\mathcal{G}_\eta}(x)  \log x^{\frac{1}{2}}-  \#\mathcal{G}_\eta\max_{N\in \mathcal{G}_\eta} h(\Delta^*)e_{N}\log (N\Delta^*)\\
\geq   &    \frac{r_{\Delta^*}(1)}{2} G_{\mathcal{G}_\eta}(x)  \log x-C(\Delta)\left(x^{\eta+\delta} (\log\log x)(\log x+\log \Delta^*)\right)\\
\geq   &    \frac{1}{2} G_{\mathcal{G}_\eta}(x)  \log x-2C(\Delta)\left(x^{\eta+\delta} (\log\log x)(\log x)\right),
\end{align*}
where $C(\Delta)=O_E(\Delta^{1/2}\log \Delta)$.

On the other hand, the normalized height appearing in \Cref{prop:Charles} can be expressed in terms of the average stable Faltings height of the CM elliptic curves supported on $y_{\Delta^*}$. Using, for example, \cite[Proposition 4.2]{MR2017146}, this normalized height is  $O(\log \Delta^*)$. Hence the right-hand side of \eqref{eq:Charles} is
\[
\ll  \sum_{N\in \mathcal{G}_\eta}h(\Delta^*)  (e_{N}(\log N+\log \Delta))\leq C'(\Delta)x^{\eta+\delta} (\log\log x)(\log x).
\]
where $C'(\Delta)=O(\Delta^{1/2} \log \Delta)$. 

Taking $\mathfrak{C}(\Delta)>2\left(2C(\Delta)+C'(\Delta)\right)$, and $x$ sufficiently large, we get a contradiction by comparing the upper and lower bounds in \eqref{eq:Charles}. Therefore, for all sufficiently large $x$, we have 
\[
G_{\mathcal{G}_\eta}(x)\ll_E \Delta^{1/2} (\log \Delta) x^{\eta+\delta}\log\log x.   
\]
\end{proof}

Finally, applying \eqref{eq:thm2-G}, \eqref{eq:general-bound}, and  \Cref{prop:general-bound-G}, we obtain 
 \[
 G_\delta(x)\ll_E \Delta^{1/2}(\log \Delta) x^{2\delta}\log\log x.
    \]
This completes the proof of \Cref{thm:upper-bound}.

\subsection{Proof of \Cref{cor:cor-1}}\label{sec:cor}
Fix any $\delta$ such that  $0<\delta<1/2$ and any $\epsilon>0$.
Let $h: (0, \infty)\to (0, \infty)$ be a function such that for all sufficiently large $x$, we have  $h(x)\gg_\epsilon  x^{2\delta+\epsilon}$. 

Let $p\geq 5$ be a supersingular prime for $E$, we have 
\[
\disc(\End_{\F_p}(E))\in \{-p, -4p\}.
\]
Hence, if 
\[
\left|\disc(\End_{\F_p}(E_p))\right|\leq \frac{4p-a_p(E)^2}{h(p)}, 
\]
then we derive
\[
4p(h(p)-1)\leq -a_p(E)^2\leq 0,
\]
which is impossible for all sufficiently large $p$.

Now let $p$ be an ordinary prime for $E$ and recall that $f_p:=[\mathcal{O}_{\Q(\pi_p(E_p))}: \End_{\F_p}(E_p)]$. Recall from \eqref{eq:inclusion-orders} that 
\[
\left|\disc(\End_{\F_p}(E_p))\right|=f_p^2\left|\disc(\Q(\pi_p(E_p)))\right|.
\]
We write 
\[
\left|\disc(\End_{\F_p}(E_p))\right|=r_p^2m_p,
\]
where $m_p\in \Z_{\geq 1}$ is  squarefree and $r_p\in \Z_{\geq 1}$. If $p\leq x$, then for any $\epsilon>0$, there exists a constant $C(\epsilon)$, depending only on $\epsilon$, such that 
\[
m_p\leq \left|\disc(\End_{\F_p}(E_p))\right|\leq C(\epsilon) x^{1-2\delta-\epsilon}; \;\;  f_p\leq r_p\leq  C(\epsilon)^{\frac{1}{2}}  x^{\frac{1}{2}-\delta-\frac{\epsilon}{2}}; \; \;  \frac{4p-a_p(E)^2}{h(p)}\leq C(\epsilon) x^{1-2\delta-\epsilon}.
\]

Let $0<\theta<1-2\delta-\epsilon$ be a parameter to be chosen later. Combining the observations above with \Cref{thm:upper-bound}, we obtain that for all sufficiently large $x$, 
\begin{align}\label{eq:cor-first-step}
& \#\left\{p\leq x: p\nmid N_E, \left|\disc(\End_{\F_p}(E_p))\right|\leq \frac{4p-a_p(E)^2}{h(p)}\right\}  \nonumber \\
\leq & \#\left\{p\leq x: p\nmid N_E, r_p^2m_p\leq \frac{4p-a_p(E)^2}{h(p)}\right\}+O_h(1)  \nonumber \\
\leq & \sum_{1\le D \leq C(\epsilon)x^{1-2\delta-\epsilon}}\#\left\{p\leq x: p\nmid N_E, r_p^2m_p \leq \frac{4p-a_p(E)^2}{h(p)}, m_p=D\right\}+O_h(1) \nonumber \\
\ll_h & \sum_{1 \leq D\leq x^\theta}\#\left\{p\leq x: p\nmid N_E, r_p^2 \leq \frac{4p-a_p(E)^2}{h(p)D}, \Q(\pi_p(E_p))=\Q(\sqrt{-D})\right\} \nonumber \\
& +\sum_{x^{\theta}\leq D\leq C(\epsilon)x^{1-2\delta-\epsilon}}\#\left\{p\leq x: p\nmid N_E, r_p^2 \leq \frac{4p-a_p(E)^2}{h(p)D}, \Q(\pi_p(E_p))=\Q(\sqrt{-D})\right\} \nonumber \\
\ll_h & \sum_{1 \leq D\leq x^\theta}\#\left\{p\leq x: p\nmid N_E, f_p \leq C(\epsilon)^{\frac{1}{2}} x^{\frac{1}{2}-\delta-\frac{\epsilon}{2}}, \Q(\pi_p(E_p))=\Q(\sqrt{-D})\right\} \nonumber \\
& +\sum_{x^{\theta}\leq D\leq C(\epsilon)x^{1-2\delta-\epsilon}}\#\left\{p\leq x: p\nmid N_E, f_p \leq C(\epsilon)^{\frac{1}{2}}x^{\frac{1}{2}-\delta-\frac{\theta}{2}-\frac{\epsilon}{2}}, \Q(\pi_p(E_p))=\Q(\sqrt{-D})\right\} \nonumber  \\
\ll_{E, h, \epsilon} &  \sum_{1 \leq D\leq x^\theta} G_{\frac{1}{2}-\delta-\frac{\epsilon}{4}}(x; D)+ \sum_{x^{\theta}\leq D\leq C(\epsilon)x^{1-2\delta-\epsilon}} G_{\frac{1}{2}-\delta-\frac{\theta}{2}-\frac{\epsilon}{4}}(x; D). 
\end{align}

Using \Cref{thm:upper-bound} \footnote{In fact, the ``trivial'' bound in \Cref{lem:trivial} would also suffice, since the logarithmic factor is irrelevant here and can be absorbed into $x^\epsilon$.}, \eqref{eq:cor-first-step} becomes 
\begin{align*}
   \ll_{E, h, \epsilon} & \sum_{1\le D\le x^\theta} \sqrt{D}(\log D)x^{1-2\delta-\frac{\epsilon}{2}}(\log\log x)+ \sum_{x^\theta\leq D\leq C(\epsilon)x^{1-2\delta-\epsilon}} \sqrt{D}(\log D)x^{1-2\delta-\theta-\frac{\epsilon}{2}}(\log\log x) \\
    \ll_{E, h, \epsilon} & x^{\frac{3\theta}{2}+1-2\delta-\frac{\epsilon}{2}}(\log x)(\log\log x)+ x^{\frac{3(1-2\delta-\epsilon)}{2}+1-2\delta-\theta-\frac{\epsilon}{2}}(\log x)(\log\log x) \\
    \ll_{E, h, \epsilon} & x^{-\frac{\epsilon}{4}}\cdot x^{1+\frac{3\theta}{2}-2\delta}+x^{-\frac{7\epsilon}{4}}\cdot x^{\frac{5}{2}-5\delta-\theta}\\
\ll_{E, h, \epsilon} & x^{-\frac{\epsilon}{4}}\left(x^{1+\frac{3\theta}{2}-2\delta}+x^{\frac{5}{2}-5\delta-\theta}\right).
\end{align*}
Solving the system of equations
\[
1+\frac{3\theta}{2}-2\delta=1, \quad \frac{5}{2}-5\delta-\theta=1,
\]
gives $\delta=\frac{9}{38}$ and  $\theta=\frac{6}{19}$. With these values, we obtain that
 for any $h(x)\gg_\epsilon x^{\frac{9}{19}+\epsilon}$,   
\[
\#\left\{p\leq x: p\nmid N_E, \left|\disc(\End_{\F_p}(E_p))\right|\leq \frac{4p-a_p(E)^2}{h(p)}\right\}\ll_{E, h, \epsilon} x^{1-\frac{\epsilon}{4}}.
\]
This completes the proof of \Cref{cor:cor-1}.


\bibliographystyle{amsplain}
\bibliography{References}

\providecommand{\bysame}{\leavevmode\hbox to3em{\hrulefill}\thinspace}
\providecommand{\MR}{\relax\ifhmode\unskip\space\fi MR }
\providecommand{\MRhref}[2]{%
  \href{http://www.ams.org/mathscinet-getitem?mr=#1}{#2}
}
\providecommand{\href}[2]{#2}
\begin{thebibliography}{10}

\bibitem{MR223335}
E.~Artin and J.~Tate, \emph{Class field theory}, W. A. Benjamin, Inc., New York-Amsterdam, 1968. \MR{223335}

\bibitem{aslanyan2023multiplicative}
Vahagn Aslanyan, Sebastian Eterovi{\'c}, and Guy Fowler, \emph{Multiplicative relations among differences of singular moduli}, arXiv preprint arXiv:2308.12244 (2023).

\bibitem{MR2017146}
Pascal Autissier, \emph{Hauteur des correspondances de {H}ecke}, Bull. Soc. Math. France \textbf{131} (2003), no.~3, 421--433. \MR{2017146}

\bibitem{MR4190395}
Yuri Bilu, Philipp Habegger, and Lars K\"uhne, \emph{No singular modulus is a unit}, Int. Math. Res. Not. IMRN (2020), no.~24, 10005--10041. \MR{4190395}

\bibitem{MR4276350}
Yulin Cai, \emph{Bounding the difference of two singular moduli}, Mosc. J. Comb. Number Theory \textbf{10} (2021), no.~2, 95--110. \MR{4276350}

\bibitem{MR4296371}
Francesco Campagna, \emph{On singular moduli that are {$S$}-units}, Manuscripta Math. \textbf{166} (2021), no.~1-2, 73--90. \MR{4296371}

\bibitem{MR4929976}
Francesco Campagna and Gabriel~A. Dill, \emph{Around the support problem for {H}ilbert class polynomials}, Comment. Math. Helv. \textbf{100} (2025), no.~3, 421--462. \MR{4929976}

\bibitem{MR3843371}
Fran\c~cois Charles, \emph{Exceptional isogenies between reductions of pairs of elliptic curves}, Duke Math. J. \textbf{167} (2018), no.~11, 2039--2072. \MR{3843371}

\bibitem{MR2058609}
Laurent Clozel and Emmanuel Ullmo, \emph{\'equidistribution des points de {H}ecke}, Contributions to automorphic forms, geometry, and number theory, Johns Hopkins Univ. Press, Baltimore, MD, 2004, pp.~193--254. \MR{2058609}

\bibitem{MR2464027}
Alina~Carmen Cojocaru and Chantal David, \emph{Frobenius fields for elliptic curves}, Amer. J. Math. \textbf{130} (2008), no.~6, 1535--1560. \MR{2464027}

\bibitem{MR4280387}
Alina~Carmen Cojocaru and Matthew Fitzpatrick, \emph{The absolute discriminant of the endomorphism ring of most reductions of a non-{CM} elliptic curve is close to maximal}, Arithmetic, geometry, cryptography and coding theory, Contemp. Math., vol. 770, Amer. Math. Soc., [Providence], RI, [2021] \copyright 2021, pp.~51--57. \MR{4280387}

\bibitem{MR2178556}
Alina~Carmen Cojocaru, Etienne Fouvry, and M.~Ram Murty, \emph{The square sieve and the {L}ang-{T}rotter conjecture}, Canad. J. Math. \textbf{57} (2005), no.~6, 1155--1177. \MR{2178556}

\bibitem{MR4586829}
Alina~Carmen Cojocaru and Tian Wang, \emph{Bounds for the distribution of the {F}robenius traces associated to products of non-{CM} elliptic curves}, Canad. J. Math. \textbf{75} (2023), no.~3, 687--712. \MR{4586829}

\bibitem{MR1295951}
Salvador Comalada, \emph{Twists and reduction of an elliptic curve}, J. Number Theory \textbf{49} (1994), no.~1, 45--62. \MR{1295951}

\bibitem{MR1028322}
David~A. Cox, \emph{Primes of the form {$x^2 + ny^2$}}, A Wiley-Interscience Publication, John Wiley \& Sons, Inc., New York, 1989, Fermat, class field theory and complex multiplication. \MR{1028322}

\bibitem{MR5125}
Max Deuring, \emph{Die {T}ypen der {M}ultiplikatorenringe elliptischer {F}unktionenk\"orper}, Abh. Math. Sem. Hansischen Univ. \textbf{14} (1941), 197--272. \MR{5125}

\bibitem{MR903384}
Noam~D. Elkies, \emph{The existence of infinitely many supersingular primes for every elliptic curve over {${\bf Q}$}}, Invent. Math. \textbf{89} (1987), no.~3, 561--567. \MR{903384}

\bibitem{MR1144318}
\bysame, \emph{Distribution of supersingular primes}, no. 198-200, 1991, Journ\'ees Arithm\'etiques, 1989 (Luminy, 1989), pp.~127--132. \MR{1144318}

\bibitem{MR2476572}
Andreas Enge, \emph{The complexity of class polynomial computation via floating point approximations}, Math. Comp. \textbf{78} (2009), no.~266, 1089--1107. \MR{2476572}

\bibitem{MR1382477}
Etienne Fouvry and M.~Ram Murty, \emph{On the distribution of supersingular primes}, Canad. J. Math. \textbf{48} (1996), no.~1, 81--104. \MR{1382477}

\bibitem{MR4371528}
Guy Fowler, \emph{Multiplicative independence of modular functions}, J. Th\'eor. Nombres Bordeaux \textbf{33} (2021), no.~2, 459--509. \MR{4371528}

\bibitem{MR772491}
Benedict~H. Gross and Don~B. Zagier, \emph{On singular moduli}, J. Reine Angew. Math. \textbf{355} (1985), 191--220. \MR{772491}

\bibitem{MR3404647}
Philipp Habegger, \emph{Singular moduli that are algebraic units}, Algebra Number Theory \textbf{9} (2015), no.~7, 1515--1524. \MR{3404647}

\bibitem{MR4129386}
Sebasti\'an Herrero, Ricardo Menares, and Juan Rivera-Letelier, \emph{{$p$}-adic distribution of {CM} points and {H}ecke orbits {I}: {C}onvergence towards the {G}auss point}, Algebra Number Theory \textbf{14} (2020), no.~5, 1239--1290. \MR{4129386}

\bibitem{MR4713026}
\bysame, \emph{There are at most finitely many singular moduli that are {$S$}-units}, Compos. Math. \textbf{160} (2024), no.~4, 732--770. \MR{4713026}

\bibitem{MR1040429}
Masanobu Kaneko, \emph{Supersingular {$j$}-invariants as singular moduli {${\rm mod}\, p$}}, Osaka J. Math. \textbf{26} (1989), no.~4, 849--855. \MR{1040429}

\bibitem{MR2695524}
David~Russell Kohel, \emph{Endomorphism rings of elliptic curves over finite fields}, ProQuest LLC, Ann Arbor, MI, 1996, Thesis (Ph.D.)--University of California, Berkeley. \MR{2695524}

\bibitem{MR3356031}
Youness Lamzouri, Xiannan Li, and Kannan Soundararajan, \emph{Conditional bounds for the least quadratic non-residue and related problems}, Math. Comp. \textbf{84} (2015), no.~295, 2391--2412. \MR{3356031}

\bibitem{MR409362}
Serge Lang, \emph{Elliptic functions}, Addison-Wesley Publishing Co., Inc., Reading, Mass.-London-Amsterdam, 1973, With an appendix by J. Tate. \MR{409362}

\bibitem{MR890960}
\bysame, \emph{Elliptic functions}, second ed., Graduate Texts in Mathematics, vol. 112, Springer-Verlag, New York, 1987, With an appendix by J. Tate. \MR{890960}

\bibitem{MR568299}
Serge Lang and Hale Trotter, \emph{Frobenius distributions in {${\rm GL}\sb{2}$}-extensions}, Lecture Notes in Mathematics, vol. Vol. 504, Springer-Verlag, Berlin-New York, 1976, Distribution of Frobenius automorphisms in ${\rm GL}\sb{2}$-extensions of the rational numbers. \MR{568299}

\bibitem{MR3431591}
Kristin Lauter and Bianca Viray, \emph{On singular moduli for arbitrary discriminants}, Int. Math. Res. Not. IMRN (2015), no.~19, 9206--9250. \MR{3431591}

\bibitem{MR4251608}
Yingkun Li, \emph{Singular units and isogenies between {CM} elliptic curves}, Compos. Math. \textbf{157} (2021), no.~5, 1022--1035. \MR{4251608}

\bibitem{lmfdb}
The {LMFDB Collaboration}, \emph{The {L}-functions and modular forms database}, \url{https://www.lmfdb.org}, 2026, [Online; accessed 23 March 2026].

\bibitem{MR935007}
M.~Ram Murty, V.~Kumar Murty, and N.~Saradha, \emph{Modular forms and the {C}hebotarev density theorem}, Amer. J. Math. \textbf{110} (1988), no.~2, 253--281. \MR{935007}

\bibitem{MR736719}
Guy Robin, \emph{Estimation de la fonction de {T}chebychef {$\theta $}\ sur le {$k$}-i\`eme nombre premier et grandes valeurs de la fonction {$\omega (n)$}\ nombre de diviseurs premiers de {$n$}}, Acta Arith. \textbf{42} (1983), no.~4, 367--389. \MR{736719}

\bibitem{MR137689}
J.~Barkley Rosser and Lowell Schoenfeld, \emph{Approximate formulas for some functions of prime numbers}, Illinois J. Math. \textbf{6} (1962), 64--94. \MR{137689}

\bibitem{MR632985}
Karl Rubin, \emph{Elliptic curves with complex multiplication and the conjecture of {B}irch and {S}winnerton-{D}yer}, Invent. Math. \textbf{64} (1981), no.~3, 455--470. \MR{632985}

\bibitem{MR1085266}
Ren\'e Schoof, \emph{The exponents of the groups of points on the reductions of an elliptic curve}, Arithmetic algebraic geometry ({T}exel, 1989), Progr. Math., vol.~89, Birkh\"auser Boston, Boston, MA, 1991, pp.~325--335. \MR{1085266}

\bibitem{MR644559}
Jean-Pierre Serre, \emph{Quelques applications du th\'eor\`eme de densit\'e{} de {C}hebotarev}, Inst. Hautes \'Etudes Sci. Publ. Math. (1981), no.~54, 123--201. \MR{644559}

\bibitem{MR4875529}
Ananth~N. Shankar and Yunqing Tang, \emph{Reductions of abelian varieties and {K}3 surfaces}, J. Number Theory \textbf{270} (2025), 122--166. \MR{4875529}

\bibitem{MR393039}
J.~Tate, \emph{Algorithm for determining the type of a singular fiber in an elliptic pencil}, Modular functions of one variable, {IV} ({P}roc. {I}nternat. {S}ummer {S}chool, {U}niv. {A}ntwerp, {A}ntwerp, 1972), Lecture Notes in Math., vol. Vol. 476, Springer, Berlin-New York, 1975, pp.~33--52. \MR{393039}

\bibitem{MR3848226}
Jesse Thorner and Asif Zaman, \emph{A {C}hebotarev variant of the {B}run-{T}itchmarsh theorem and bounds for the {L}ang-{T}rotter conjectures}, Int. Math. Res. Not. IMRN (2018), no.~16, 4991--5027. \MR{3848226}

\bibitem{MR265369}
William~C. Waterhouse, \emph{Abelian varieties over finite fields}, Ann. Sci. \'Ecole Norm. Sup. (4) \textbf{2} (1969), 521--560. \MR{265369}

\bibitem{MR1826411}
Shouwu Zhang, \emph{Heights of {H}eegner points on {S}himura curves}, Ann. of Math. (2) \textbf{153} (2001), no.~1, 27--147. \MR{1826411}

\bibitem{MR3453123}
David Zywina, \emph{Bounds for the {L}ang-{T}rotter conjectures}, S{CHOLAR}---a scientific celebration highlighting open lines of arithmetic research, Contemp. Math., vol. 655, Amer. Math. Soc., Providence, RI, 2015, pp.~235--256. \MR{3453123}

\end{thebibliography}
\end{document}